\providecommand{\keywords}[1]
{
  \small	
  \textbf{Keywords: } #1
}
\newtheorem{theorem}{Theorem}
\newtheorem{corollary}{Corollary}
\newtheorem{lemma}{Lemma}
\newtheorem*{remark}{Remark}
\newtheorem{prop}{Proposition}
\theoremstyle{definition}
\newtheorem{definition}{Definition}
\theoremstyle{definition}
\DeclareMathOperator{\trace}{tr}
\DeclareMathOperator{\kernel}{ker}
\DeclareMathOperator{\spn}{span}
\DeclareMathOperator{\supp}{supp}
\title{\bf{Characterizing the Lovasz theta function via walk generating functions}}
\author{Lasse Harboe Wolff}
\affil{\small{\emph{Department of Mathematical Sciences, University of Copenhagen}, \\  \emph{Universitetsparken 5, 2100 Denmark}} \\
E-mail: lhw@math.ku.dk}
\date{\small{\today} }
\begin{document}

\maketitle

\begin{abstract}
\begin{spacing}{1.05}
\noindent A new characterization of the Lovasz theta function is provided by relating it to the (weighted) walk-generating function, thus establishing a relationship between two seemingly quite distinct concepts in algebraic graph theory. An application of this new characterization is given by showing how it straightforwardly entails multiple natural generalizations of the Hoffman upper bound (on both the independence number and Lovasz number) to arbitrary non-regular graphs. These new bounds possess properties that make them advantageous to previously derived such generalizations. It will also be shown that the Lovasz theta function equals a natural relaxation of the independence number, here dubbed the spherical independence number -- the determination of which involves producing a vector corresponding to a generalized maximum independent set which might be significant for the maximum independent set problem. Lastly, the derivation of the new characterization involves proving a certain analysis result which may in itself be of interest. 
\end{spacing}
\end{abstract}

\keywords{Lovasz theta function, walk-generating function, independent set, Hoffman bound}


\section{Introduction}

The Lovasz theta function $\vartheta$, also called the Lovasz number, is an important and much studied graph invariant for multiple reasons. $\vartheta(G)$ was notably introduced by Lovasz \cite{lovasz_shannon_1979} as an upper bound on the (zero error) Shannon capacity $\Theta (G)$ of graph $G$ (see \eqref{eq:original def of lovasz} for the original definition of $\vartheta$). The Shannon capacity 
$$\Theta(G) := \sup_{k \in \mathbb{N}} \left( \alpha \left(  \boxtimes^k  G \right) \right)^{1/k}$$ was introduced by Shannon \cite{shannon_zero_1956} and equals the maximum size of an effective alphabet for a communication channel with confusability graph $G$ and is very difficult to determine in general, even for some of the simplest non-trivial graphs \cite{Alon_2005JumpsGraphPowe, polak_new_2019, Bohman_2005cycles}. $\alpha$ denotes the independence number and $\boxtimes$ the \emph{strong graph product} (see Section \ref{sec:preliminaries}). $\vartheta(G)$ is also an important function due to it satisfying the follows "sandwich inequality" \cite{lovasz_shannon_1979, Lovsz1986AlgorithmicTO}
\begin{align*}
\omega(G) \leq \vartheta(\overline{G}) \leq \chi (G) \ \: ,
\end{align*}
$\overline{G}$ being the complement of the graph $G$, and $\omega (G)$ and $\chi (G)$ are respectively the clique and chromatic number of $G$ -- two numbers of great importance and with many applications in graph theory and computer science, but which are both NP-hard to compute (and approximate) \cite{Hstad1999CliqueIH, Kilian_chromatic}. $\vartheta(G)$ itself can be approximated in polynomial time \cite{Grtschel1981TheEM}. Other reasons for the interest in studying the Lovasz theta function include the fact that graph theorists have derived an increasingly long and still expanding list of different characterizations which all, perhaps quite surprisingly, turn out to equal $\vartheta$ \cite{Knuth1994, Karger_1998, Galtman2000, Luz2005}, thus establishing some form on universality of graph optimization problems. $\vartheta$ even finds applications within the field of quantum foundations \cite{Winter_2014quantumCorrelations, Howard2014}. 
\\
\\
The main contribution of this paper is to derive two new characterizations of the Lovasz theta function $\vartheta$. One of them showing $\vartheta (G)$ to equal the \emph{spherical independence number} $\alpha_{\mathcal{S}}(G)$, see Definition \ref{def:spherical ind number} from Section \ref{sec:preliminaries}, which essentially equals a relaxation of the ordinary independence number $\alpha$ wherein Boolean indicator vectors are relaxed to elements of a unique sphere of real vectors. The other characterization uses the \emph{weighted walk-generating functions} $W_A (x)$ for $G$, see Definition \ref{def:weighted walk-generating function} from Section \ref{sec:preliminaries}, and it shows $\vartheta(G)$ to equal $W_A (x)$ when $x$ is minimized over the interval $[\lambda_{min}(A)^{-1}, \lambda_{max}(A)^{-1}]$ and $A$ is minimized over all real symmetric weighted adjacency matrices for $G$. Both of these new characterizations are the content of Theorem \ref{theorem:char lovasz numb} from Section \ref{sec:main results}, and it is proven in Section \ref{sec:proof of main results}.

This new result has several consequences. First of all, it establishes a new connection between two quite distinct areas of algebraic graph theory -- namely the Lovasz theta function and walk-generating functions. Secondly, the newly established relationship between $\vartheta(G)$ and walk-generating functions entails several natural generalizations of the Hoffman bound for non-regular graphs. These new bounds not only upper bound $\alpha(G)$ but also $\vartheta (G)$, and it is shown that at least one of them is guaranteed to always be better than or equal to a famous generalization of the Hoffman bound which was recently shown to also bound $\vartheta$. A slight elaboration of this application is given below. Thirdly, it follows from the new results that there will for any $n$ vertex graph $G$ always exist a real vector $\boldsymbol{v} \in \mathbb{R}^n$ satisfying $|\boldsymbol{v}|^2 = \vartheta(G)$, $|\boldsymbol{v}|^2 = \langle \boldsymbol{1}, \boldsymbol{v} \rangle$ and $\langle \boldsymbol{v}, A \boldsymbol{v} \rangle = 0$, and $\boldsymbol{v}$ is further the largest such vector satisfying the latter two conditions. $\boldsymbol{1}$ and $A$ are respectively the all ones vector and some weighted adjacency matrix for $G$. This $\boldsymbol{v}$ is straightforward to determine once $\vartheta(G)$ has been calculated, and it could serve as a first approximation to the $\{0,1\}^n$ vector indicating the maximum independent set of $G$. It can be hoped that this could help attacking the NP-hard maximum independent set problem, since $\boldsymbol{v}$ is in this context easy to calculate.

Finally, it should be noted that to prove the main result of this paper, a duality result regarding the location of the critical points of certain real functions $f(x)$ that can be written as a finite sum of reciprocal functions as $f(x)=\sum_{i=1}^N \frac{a_i}{1-b_i x}, \ a_i \geq 0$ had to be established. Even if this is of limited further use within graph theory, the result, which is the content of Theorem \ref{lemma:app:main lemma for reciprocal functs} from Appendix \ref{app:special reciprocal function}, might be an interesting result in and of itself.
\\
\\
We shall now say a few more words regarding the application of the new characterization of $\vartheta (G)$ to generalizations of the Hoffman bound. The Hoffman bound, first derived by Hoffman (although not published -- see e.g. \cite{haemers_hoffmans_2021} for a brief historical overview), states that if $G$ is an $n$-vertex regular graph then the independence number $\alpha(G)$ is upper bounded by
\begin{align} \label{eq:hoffman upper bound independence}
\alpha(G) \leq \frac{- \lambda_n n}{\lambda_1-\lambda_n} \: \ ,
\end{align}
where $\lambda_1$ and $\lambda_n$ are respectively the largest and smallest adjacency eigenvalues of $G$. The inequality \eqref{eq:hoffman upper bound independence} was later extended by Lovasz \cite{lovasz_shannon_1979} to also hold for the Lovasz number, i.e. if $G$ is an $n$-vertex regular graph, then 
\begin{align*}
\vartheta(G) \leq \frac{- \lambda_n n}{\lambda_1-\lambda_n} \: \ .
\end{align*}
Trying to generalize the Hoffman bound and extending the bounds to other quantities has long been an active area of research \cite{HAEMERS1978445, GODSIL2008721, Wocjan2018MoreTO, haemers_hoffmans_2021}. It is however not entirely obvious what should count as a generalization of the Hoffman bound. In this paper, was call a bound a \emph{natural} generalizations of the Hoffman bound if it upper bounds $\alpha(G)$ or $\vartheta(G)$ -- depending on which upper bound one seeks to generalize, it equals the Hoffman bound for regular graph and it can be expressed as a sufficiently simple function of the eigenvalues and eigenvectors of the adjacency matrix, Laplacian or some other simple graph matrix. Obviously, generalizations of the Hoffman bound on $\vartheta(G)$ are more powerful, since we always have $\alpha(G) \leq \vartheta(G)$. One of the most famous generalizations of Hoffman's bound, which is also natural in the above sense, is the following 
\begin{align} \label{eq:godsil bound}
\alpha(G) \leq n \left(1 - \frac{\delta}{\mu_1} \right) \: \ ,
\end{align}
which holds for all graphs \cite{vanDam1998, GODSIL2008721} and where $\mu_1$ and $\delta$ are respectively the largest Laplacian eigenvalue and minimum degree of $G$. It was recently proven, in the context of simplicial complexes \cite[section 5]{Bachoc2019}, that \eqref{eq:godsil bound} holds with $\alpha$ replaced by $\vartheta$, i.e. we get the following generalization of the Hoffman bound on $\vartheta$
\begin{align} \label{eq:bachoc bound theta}
\vartheta(G) \leq n \left(1 - \frac{\delta}{\mu_1} \right) \: \ .
\end{align}
In this paper, it shall be proven as a straightforward corollary of the main result -- Theorem \ref{theorem:char lovasz numb}, that the following natural generalization of the Hoffman bound on $\vartheta$ holds 
\begin{align} \label{eq:first instance walk gen bound}
\vartheta(G) \leq \min_{\lambda_n^{-1} \leq x \leq 0} W(x) \: \: ,
\end{align}
where $\lambda_n$ is the smallest adjacency eigenvalue and $W(x)$ is the (un-weighted) walk-generating function of $G$, which is a strictly convex function in the relevant interval and is easy to express in terms of the adjacency eigenvalues and eigenvectors. This is the content of Corollary \ref{cor:precise compl generaliz} from Section \ref{sec:main results}, which in turn implies several other even simpler, albeit less precise, natural generalizations of the Hoffman bound on $\vartheta$ -- see the discussion of Section \ref{sec:main results} and Corollary \ref{cor:imprecise simpl generalization}.

It turns out, as proven by Proposition \ref{prop:better Hbound}, that the new bound \eqref{eq:first instance walk gen bound} is guaranteed to be better or equal to the earlier mentioned generalization of the Hoffman bound. That is, for all graph, we have $ \min_{\lambda_n^{-1} \leq x \leq 0} W(x) \leq n \left(1 - \frac{\delta}{\mu_1} \right)$. This then also serves as an independent proof of the fact that $n \left(1 - \frac{\delta}{\mu_1} \right)$ upper bounds $\vartheta(G)$ as stated in \eqref{eq:bachoc bound theta}. The new bounds proved here also display other advantages over existing generalizations of the Hoffman bound. Many such existing generalizations are very sensitive to changes in the minimum degree $\delta$ to the extent that if $\delta=0$, they reduce to the trivial bound $\alpha(G) \leq n$ or $\vartheta(G) \leq n$. This is the case for both the bound mentioned above in
\eqref{eq:godsil bound} and \eqref{eq:bachoc bound theta}, but also holds for other well-known generalizations of the Hoffman bound like the one proven in \cite{HAEMERS1978445}. The new bounds proven here are not subject to this sensitivity -- adding an edgeless vertex to $G$ only increases the function $\min_{\lambda_n^{-1} \leq x \leq 0} W(x)$ by $1$.

\section{Preliminaries} \label{sec:preliminaries}

Before presenting the main results of this paper in Section \ref{sec:main results}, we shall first introduce some notation and background results from linear algebra and graph theory, including some important formulas for $\vartheta(G)$. We shall also introduce the concepts of a \emph{weighted walk-generating function} and the \emph{spherical independence number} of a graph, in definitions \ref{def:weighted walk-generating function} and \ref{def:spherical ind number} below, which will become important later.  
\\
\\
Vectors will always be denoted in lower-case boldface, like $\boldsymbol{v}$, and will here be elements of $\mathbb{R}^n$ or $\mathbb{C}^n$ -- most often of $\mathbb{R}^n$. We let $\langle \cdot , \cdot \rangle$ denote the relevant inner product, thus if $\left\{ \boldsymbol{e}_i \right\}_{i=1}^n$ is an ortho-normal basis for $\mathbb{C}^n$, i.e. $\langle \boldsymbol{e}_i , \boldsymbol{e}_j \rangle = \delta_{i j}$, and if $\boldsymbol{v}=\sum_{i=1}^n v_i \boldsymbol{e}_i$ and $\boldsymbol{u}=\sum_{i=1}^n u_i \boldsymbol{e}_i$ for some $v_i, u_i \in \mathbb{C}$, we have $\langle \boldsymbol{v} , \boldsymbol{u} \rangle = \sum_{i=1}^n \bar{v}_i u_i$, where $\bar{\cdot}$ denotes the complex conjugate. We also write $|\boldsymbol{v}|=\sqrt{\langle \boldsymbol{v}, \boldsymbol{v} \rangle}$ for the induced norm. 

Matrices $M$ will always be denoted as upper-case letters. $I$ will always denote the identity matrix, $J$ will always denote the matrix of all ones and $\boldsymbol{1}$ will denote the vector of all ones, i.e.
\begin{align*} 
\boldsymbol{1}= \begin{pmatrix}
    1 \\
    \vdots \\
    1 \\
\end{pmatrix} 
\ , \qquad \qquad 
J= \begin{pmatrix}
    1 & \hdots & 1\\
    \vdots & \ddots & \vdots \\
    1 & \hdots & 1\\
\end{pmatrix} \ \: .
\end{align*}
Note that $J=\boldsymbol{1} \boldsymbol{1}^\dagger$ (with $\boldsymbol{v}^\dagger$ here a shorthand for $\langle \boldsymbol{v} , \cdot \rangle$). If $A$ is a linear operator, we denote the spectrum of $A$ as $\sigma(A)$. Thus, by the spectral theorem, any Hermitian matrix $A$ can be written as $A = \sum_{i=1}^n \lambda_i \boldsymbol{u}_i \boldsymbol{u}_i^\dagger = \sum_{\lambda \in \sigma(A)} \lambda \ P_\lambda$, for some orthonormal basis $\{ \boldsymbol{u}_i \}_{i=1}^n$ and where $P_\lambda$ denotes the projection onto the $\lambda$ eigen-space of $A$. We shall denote the eigen-space of a linear operator $A$ associated with eigenvalue $\lambda$ as $E_\lambda (A)$. Thus, $E_\lambda(A)= \spn \{ \boldsymbol{u}_i  \mid 1 \leq i \leq n , \ \lambda_i=\lambda \}$, and further $P_\lambda = \sum_{\boldsymbol{u}_i \in E_\lambda(A)} \boldsymbol{u}_i \boldsymbol{u}_i^\dagger$. When $A$ is Hermitian such that its eigenvalues are real, we shall lastly denote $\lambda_{max}(A)$ and $\lambda_{min}(A)$ as respectively the largest and smallest eigenvalue of $A$, in which case we also abbreviate $E_{\lambda_{max}(A)}(A)$ and $E_{\lambda_{min}(A)}(A)$ simply as $E_{max}(A)$ and $E_{min}(A)$ respectively.
\\
\\
We now introduce some relevant graph-theoretic concepts. $G$ will here always be a simple graph, i.e. an un-directed graph with no self-loops (no vertices are connected to themselves). However, all following results can in fact easily be extended to graphs containing self-loops. $G$ has vertex set $V(G)$ and edge set $E(G)$. The \emph{order} of a graph $G$ is the number of vertices in the graph, and will be denoted by $n$, i.e. $|V(G)|=n$. For $i,j \in V(G)$, we write $i \sim j$ if vertices $i$ and $j$ are connected/adjacent in $G$ and $i \not\sim j$ otherwise. Many of the results of this paper are related to the \emph{adjacency matrix} of a graph $G$ which has components $A_{ij}=1$ if $i \sim j$ and $A_{ij}=0$ if $i \not\sim j$, for some fixed labeling of the vertices in $V(G)$. We shall also study the more general \emph{weighted adjacency matrices} of some graph $G$, which without further specification will denote the set of matrices that satisfying just $A_{ij}=0$ if $i \not\sim j$ in $G$. The weighted adjacency matrices considered here will however all be real and symmetric. The \emph{Laplacian} matrix $L$ of $G$ is defined as $L=D-A$ where $A$ is the ordinary adjacency matrix and $D$ is the \emph{degree matrix} of $G$, whose components are $D_{ij} = \delta_{ij} d_i$ with $d_i$ being the degree or number of vertices adjacent to vertex $i$ in $G$. 

As mentioned in the Introduction, we here let $G \boxtimes H$ denote the strong graph product between graphs $G$ and $H$, i.e. a graph with vertex set $V(G \boxtimes H) = V(G) \times V(H)$, and where $(i,j) \sim_{G \boxtimes H} (i',j')$ iff either $i \sim_G i'$ and $j \sim_H j'$, or $i \sim_G i'$ and $j = j'$, or $i=i'$ and $j \sim_H j'$. It is not difficult to show that the ordinary (un-weighted) adjacency matrix $A(G \boxtimes H)$ for $G \boxtimes H$ can be expressed in terms of the individual (un-weighted) adjacency matrices $A(G)$ and $A(H)$ for $G$ and $H$ as $A(G \boxtimes H) = (A(G)+I_G) \otimes (A(H) + I_H) - I_G \otimes I_H$ in the product basis. $I_G$ and $I_H$ are the identity matrices on the individual spaces, and $\otimes$ is the Kronecker product.

We now provide precise definitions of $\vartheta(G)$. It was already clear in \cite{lovasz_shannon_1979} that $\vartheta(G)$ had many different and quite distinct characterizations, but $\vartheta(G)$ was initially defined as
\begin{align} \label{eq:original def of lovasz}
\vartheta(G) = \min_{U, \boldsymbol{c}} \max_i \frac{1}{\langle \boldsymbol{c}, \boldsymbol{u}_i \rangle^2}
\end{align}
where the minimum is taken over all \emph{ortho-normal representations} $U$ of $G$, meaning sets of real unit-vectors $U=\left\{ \boldsymbol{u}_i \in \mathbb{R}^N \mid 1 \leq i \leq n, \ |\boldsymbol{u}_i|^2=1 \right\}$, where the vertices of $G$ are labeled by $i \in \{1,2,...,n \}$, such that $\langle \boldsymbol{u}_i , \boldsymbol{u}_j \rangle =0$ if vertices $i$ and $j$ are not connected and $\boldsymbol{c} \in \mathbb{R}^N$ for some $N$ (not necessarily equaling $n$). However, for the purposes of this paper, we shall instead of using \eqref{eq:original def of lovasz} mainly rely on the fact that $\vartheta(G)$ has been shown \cite{lovasz_shannon_1979} to equal
\begin{align} \label{eq:lovasz number max eigen def}
\vartheta(G) = \inf_B \lambda_{max}(B) \ , 
\end{align}
where $B$ ranges over all real symmetric matrices satisfying $B_{ij}=1$ whenever $i \not\sim j$ in $G$. 
\\
\\
We shall now introduce some less common concepts that will be important to the presentation of the main results of this paper in Section \ref{sec:main results}.

\begin{definition}[weighted walk-generating function] \label{def:weighted walk-generating function}
For a graph $G$ of order $n$ with a weighted Hermitian adjacency matrix $A = \sum_{\lambda \in \sigma(A)} \lambda \ P_\lambda = \sum_{i=1}^n \lambda_i \boldsymbol{u}_i \boldsymbol{u}_i^\dagger$, the resulting weighted walk-generating function w.r.t. $A$ is a real function $W_A (x)$ defined as
\begin{align*} 
W_A (x) = \langle \boldsymbol{1},  (I-xA)^{-1}  \boldsymbol{1} \rangle = \sum_{i=1}^n \frac{ \left| \langle \boldsymbol{1},  \boldsymbol{u}_i \rangle \right|^2 }{1-x \lambda_i} = \sum_{\lambda \in \sigma(A)} \frac{ \langle \boldsymbol{1}, P_\lambda \boldsymbol{1} \rangle }{1-\lambda x}
\end{align*}
\end{definition}
If $\langle \boldsymbol{1}, P_\lambda \boldsymbol{1} \rangle =0$ for some $\lambda \in \sigma(A)$, we take $W_A(x)$ to equal $\sum_{\mu \in \sigma(A) \backslash \lambda} \frac{ \langle \boldsymbol{1}, P_\mu \boldsymbol{1} \rangle }{1-\mu x}$. Hence, $W_A(\lambda^{-1})$ is then well-defined, even if $(I-A / \lambda)^{-1}$ itself is not. In case $A$ is taken to be simply the ordinary/un-weighted adjacency matrix, we say that $W_A(x)$ is the ordinary/un-weighted walk-generating function for $G$, and we write it simply as $W(x)$, when it is clear which graph it is associated to. 
\begin{remark}
In other works, it is often the matrix function $(I-x A)^{-1}$ itself which is called the walk-generating function. If this distinction had been of importance here, we might have called $W_A (x)$ a \emph{summed} or a \emph{total} walk-generating function. It can be shown that $W_A(x)$ is a generating function for the sum of walks in $G$ of fixed length, weighted by products the coefficients from $A$ -- thus motivating its name. 
\end{remark}

We next define the \emph{spherical independence number} of a graph. To do that, we first introduce the following sphere $\mathcal{S}_n$ in $\mathbb{R}^n$
\begin{align} \label{eq:def of real sphere}
\mathcal{S}_n := \left\{ \boldsymbol{v} \in \mathbb{R}^n \ \middle| \ \left|\boldsymbol{v}-\frac{1}{2} \boldsymbol{1} \right|^2=\frac{n}{4} \right\} = \left\{ \boldsymbol{v} \in \mathbb{R}^n \ \middle| \ \langle \boldsymbol{1} , \boldsymbol{v} \rangle =|\boldsymbol{v}|^2 \vphantom{1^{1^1}} \right\} \ \: .
\end{align}

\begin{definition}[Spherical independence number] \label{def:spherical ind number} Let $G$ be a graph of order $n$. Then, the spherical independence number $\alpha_\mathcal{S} (G)$ of $G$ is given by
\begin{align*}
\alpha_S (G) = \inf_A \sup_{\boldsymbol{v} \in \mathcal{S}_n} \left\{ |\boldsymbol{v}|^2 \ \middle| \ \langle \boldsymbol{v}, A \boldsymbol{v} \rangle =0 \vphantom{1^{1^1}} \right\} \ \: ,
\end{align*}
where $A$ ranges over all (real symmetric) weighted adjacency matrices for $G$, and $\mathcal{S}_n$ is given in \eqref{eq:def of real sphere}.
\end{definition}
The reason behind calling $\alpha_S(G)$ the \emph{spherical independence number} is that replacing the spherical set of points $\mathcal{S}_n$ in Definition \ref{def:spherical ind number} above with the set $\{0,1\}^n \subset \mathbb{R}^n$ of vectors whose components equal only $0$ or $1$ (that is, Boolean indicator functions for subsets of $V(G)$), one naturally recovers the ordinary independence number of $G$, i.e. $\inf_A \sup_{\boldsymbol{v} \in \{0,1\}^n} \left\{ |\boldsymbol{v}|^2 \ \middle| \ \langle \boldsymbol{v}, A \boldsymbol{v} \rangle =0  \right\} = \alpha(G)$. It is also not difficult to show that if one simply replaces $\mathcal{S}_n$ with the subset of $\mathcal{S}_n$ consisting of vectors with non-negative components, then one similarly recovers $\alpha(G)$. 

\begin{remark}
Since $\{0,1\}^n \subset \mathcal{S}_n$, we clearly see that $ \alpha(G) \leq \alpha_{\mathcal{S}} (G)$ holds for all $G$.
\end{remark}

\section{Main results} \label{sec:main results}

The main result of this paper, Theorem \ref{theorem:char lovasz numb}, is the following new characterization of $\vartheta(G)$, showing the Lovasz number to be identical with the spherical independence number from Definition \ref{def:spherical ind number} and expressible in terms of minimums of walk-generating functions
\bigskip
\begin{theorem} \label{theorem:char lovasz numb}
For any graph $G$, we have
\begin{align} \label{eq:thm main thm}
\vartheta(G) \ = \ \alpha_S (G) \ = \ \inf_A \min \left\{ W_A(x) \ \middle| \ x \in \left[ \lambda_{min}(A)^{-1}, \lambda_{max}(A)^{-1} \right] \vphantom{1^{1^1}} \right\} \ \: ,
\end{align}
where in the rightmost expression above, $A$ ranges over all real symmetric weighted adjacency matrices for $G$.
\end{theorem}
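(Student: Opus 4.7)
The plan is to prove the two equalities of Theorem \ref{theorem:char lovasz numb} in three stages. First, for each fixed weighted adjacency $A$, I would identify the inner supremum $\sup\{|\boldsymbol{v}|^2 : \boldsymbol{v} \in \mathcal{S}_n, \langle \boldsymbol{v}, A\boldsymbol{v}\rangle = 0\}$ with $\min_{x \in [\lambda_{min}(A)^{-1},\, \lambda_{max}(A)^{-1}]} W_A(x)$. Writing the Lagrangian $L(\boldsymbol{v}, \mu, \xi) = (1+\mu)\langle \boldsymbol{1}, \boldsymbol{v}\rangle - \langle \boldsymbol{v}, (\mu I + \xi A)\boldsymbol{v}\rangle$ (using that $\boldsymbol{v} \in \mathcal{S}_n$ is equivalent to $|\boldsymbol{v}|^2 = \langle \boldsymbol{1}, \boldsymbol{v}\rangle$), and solving the stationarity condition $\nabla_{\boldsymbol{v}} L = 0$, every critical $\boldsymbol{v}$ has the form $\boldsymbol{v} = c(I - xA)^{-1}\boldsymbol{1}$. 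Substituting back into the two constraints, the $\mathcal{S}_n$-condition pins $c = W_A(x)/S_2(x)$ with $S_2(x) := \langle \boldsymbol{1}, (I-xA)^{-2}\boldsymbol{1}\rangle$, while $\langle \boldsymbol{v}, A\boldsymbol{v}\rangle = 0$ reduces exactly to $W_A'(x) = 0$; the algebraic identity $S_2(x) = W_A(x) + x W_A'(x)$, which I would check by partial fractions, then collapses $c = 1$ and $|\boldsymbol{v}|^2 = W_A(x^*)$ at any such critical $x^*$.

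Strict convexity of $W_A$ on $(\lambda_{min}(A)^{-1}, \lambda_{max}(A)^{-1})$, a consequence of $W_A''(x) = 2\sum_i |\langle \boldsymbol{1}, \boldsymbol{u}_i\rangle|^2 \lambda_i^2 / (1-\lambda_i x)^3 > 0$ there, supplies a unique interior minimizer. The matching upper bound $\sup |\boldsymbol{v}|^2 \leq \min_x W_A(x)$ follows from weak Lagrangian duality: minimizing the dual value $\frac{(1+\mu)^2}{4\mu} \langle \boldsymbol{1}, (I + (\xi/\mu) A)^{-1}\boldsymbol{1}\rangle$ over $\mu > 0$ pins $\mu = 1$, collapsing the dual into $W_A(-\xi)$ over admissible $\xi$ with $I + \xi A \succ 0$, which ranges exactly over the prescribed interval after setting $x = -\xi$. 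Taking $\inf_A$ then yields the identity $\alpha_{\mathcal{S}}(G) = \inf_A \min_x W_A(x)$.

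Second, I would establish $\alpha_{\mathcal{S}}(G) \leq \vartheta(G)$ directly from \eqref{eq:lovasz number max eigen def}: every admissible $B$ can be written as $B = J - A$ with $A$ a weighted adjacency (in a simple graph $i \not\sim i$, so $B_{ii} = 1$ is part of the constraint), and for any feasible $\boldsymbol{v}$ one computes $\langle \boldsymbol{v}, (J-A)\boldsymbol{v}\rangle = \langle \boldsymbol{1}, \boldsymbol{v}\rangle^2 - \langle \boldsymbol{v}, A\boldsymbol{v}\rangle = |\boldsymbol{v}|^4$, so the Rayleigh bound delivers $|\boldsymbol{v}|^2 \leq \lambda_{max}(J - A)$ and hence $\alpha_{\mathcal{S}}(G) \leq \vartheta(G)$.

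The main obstacle is the reverse inequality $\vartheta(G) \leq \inf_A \min_x W_A(x)$, which I would argue pointwise by construction. Given any pair $(A, x)$ with $x$ in the interior of the interval, set the rescaled weighted adjacency $A' := -xW_A(x)\,A$ and the candidate $B := J - A'$. The matrix determinant lemma applied to $\mu I + A' - \boldsymbol{1}\boldsymbol{1}^\dagger$ shows that $\mu \notin \sigma(-A')$ is an eigenvalue of $B$ precisely when $\langle \boldsymbol{1}, (\mu I + A')^{-1} \boldsymbol{1}\rangle = 1$, and a direct substitution verifies that $\mu = W_A(x)$ solves this equation (since $-t/\mu = x$ with $t = -xW_A(x)$). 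The technically delicate step is certifying that this $\mu$ is in fact the \emph{largest} eigenvalue of $B$; this reduces by sign analysis on $\phi(\mu) = \langle \boldsymbol{1}, (\mu I + A')^{-1}\boldsymbol{1}\rangle$ to showing $W_A(x) > \max_i(-t\lambda_i(A))$, which in turn follows from $x\lambda_i(A) < 1$ for every $i$ — precisely the defining condition on the interval. This is the step at which I expect to invoke the auxiliary duality result Theorem \ref{lemma:app:main lemma for reciprocal functs} on the critical-point structure of sums $\sum a_i/(1 - b_i x)$ with $a_i \geq 0$, both to organize the eigenvalue accounting cleanly and to absorb the boundary/degenerate situations where $I - xA$ becomes singular but $W_A$ remains finite via the convention of Definition \ref{def:weighted walk-generating function}. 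Once $\lambda_{max}(B) = W_A(x)$ is confirmed, $\vartheta(G) \leq W_A(x)$ for every admissible $(A, x)$, and taking the infimum over $A$ and $x$ closes the chain.
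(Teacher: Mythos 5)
Your proposal takes a genuinely different route from the paper, and the core ideas are sound. The paper proves $\vartheta(G)\leq\alpha_S(G)$ by showing that for the minimizing scalar $t$ of $t\mapsto\lambda_{max}(J-tA)$ there is a top eigenvector $\boldsymbol{u}$ of $J-tA$ with $\langle\boldsymbol{u},A\boldsymbol{u}\rangle=0$ (via a perturbation argument), and proves the second equality through a lengthy Lagrange-multiplier case analysis that ultimately has to invoke the auxiliary Theorem~\ref{lemma:app:main lemma for reciprocal functs} to show that maximal critical points of $W_A$ fall in the central strip. Your route instead (i) derives $\sup\{|\boldsymbol{v}|^2:\boldsymbol{v}\in\mathcal{S}_n,\,\langle\boldsymbol{v},A\boldsymbol{v}\rangle=0\}\leq\min_x W_A(x)$ by weak Lagrangian duality, which elegantly sidesteps the paper's critical-point case analysis and, in particular, removes any genuine need for Theorem~\ref{lemma:app:main lemma for reciprocal functs}; and (ii) proves $\vartheta(G)\leq W_A(x)$ directly by exhibiting a feasible $B=J+xW_A(x)A$ with $\lambda_{max}(B)=W_A(x)$, using the matrix determinant lemma and the sign analysis $x\lambda_i<1$ on $(\lambda_{min}(A)^{-1},\lambda_{max}(A)^{-1})$. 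That sign analysis is already self-contained --- you should not actually need Theorem~\ref{lemma:app:main lemma for reciprocal functs} there either, contrary to your remark; and the interlacing of $\sigma(J+xW_A(x)A)$ against $\sigma(-xW_A(x)A)$ combined with strict monotonicity of $\mu\mapsto\langle\boldsymbol{1},(\mu I-xW_A(x)A)^{-1}\boldsymbol{1}\rangle$ on $(\nu_1,\infty)$ suffices to certify that $W_A(x)$ is the top eigenvalue.

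There is one genuine gap: the lower bound in your first stage, constructing a feasible $\boldsymbol{v}$ with $|\boldsymbol{v}|^2=\min_x W_A(x)$, only works as stated when the minimizer is an interior critical point, so that $\boldsymbol{v}=(I-x^*A)^{-1}\boldsymbol{1}$ is well defined. When $\langle\boldsymbol{1},P_{\lambda_{min}(A)}\boldsymbol{1}\rangle=0$ (or the analogue for $\lambda_{max}$) the function $W_A$ is finite at the endpoint, can be strictly monotone on the whole interval, and attains its minimum at $\lambda_{min}(A)^{-1}$; there $(I-x A)^{-1}$ is singular and your candidate vector does not exist. This direction is needed to close your chain $\alpha_S\leq\vartheta\leq\inf_A\min_x W_A\leq\alpha_S$, so it is not a harmless technicality. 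The paper handles it in Proposition~\ref{prop:existence of optimizer vector} by adding a correction of magnitude $\sqrt{-yW_A'(y)}$ lying in the kernel of $I-yA$; you gesture at the boundary problem only in the context of stage three (where it does not actually arise, since taking the infimum over interior $x$ and using continuity of $W_A$ at the finite endpoint already gives the closed-interval minimum) and do not supply the needed construction in stage one. Filling this in would make your approach a complete and arguably cleaner proof than the paper's, dispensing with the auxiliary duality theorem entirely.
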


\bigskip
Here, $\lambda_{min} (A), \ \lambda_{max} (A)$ are, respectively, the smallest and largest eigenvalues of $A$. If $A=0$ in the rightmost expression in \eqref{eq:thm main thm}, the expression is taken to equal $n$, corresponding to $\lambda_{min} (A)^{-1}$ and $\lambda_{max}(A)^{-1}$ being set to $- \infty$ and $+ \infty$ (otherwise $\lambda_{min} (A)$ and $\lambda_{max}(A)$ will be strictly negative and positive respectively). As shown in Appendix \ref{app:submultiplic}, it can be proven directly that the following quantity, depending on $G$ through the minimization over all weighted adjacency matrices $A$ of $G$,
\begin{align*}
\inf_A \min \left\{ W_A(x) \ \middle| \ x \in \left[ \lambda_{min}(A)^{-1}, \lambda_{max}(A)^{-1} \right] \vphantom{1^{1^1}} \right\}
\end{align*}
appearing in \eqref{eq:thm main thm} is sub-multiplicative under the strong graph product $\boxtimes$. One also straightforwardly sees that $\alpha_{\mathcal{S}} (G)$ is an upper bound on the independence number $\alpha(G)$. This, by Theorem \ref{theorem:char lovasz numb}, thus serves as an independent proof of the fact mentioned in the Introduction, that $\vartheta(G)$ upper bounds the Shannon capacity $\Theta(G)$ of $G$.  
\\
\\
The new characterization of $\vartheta(G)$ from Theorem \ref{theorem:char lovasz numb} has interesting corollaries and applications. The theorem entails several generalizations of the Hoffman upper bound on $\vartheta(G)$, which are all \emph{natural} in the sense defined in the Introduction.

\begin{figure}
    \centering
    \begin{tikzpicture}[main/.style = {draw, circle}, scale=0.8] 
\node[main] (1) at (0,1.85) {}; 

\node[main] (2) at (0,0.36) {};
\node[main] (3) at (-0.7,-0.1) {}; 
\node[main] (4) at (0.7,-0.1) {};

\node[main] (5) at (0,-0.5) {}; 

\node[main] (7) at (-0.7,-0.9) {};
\node[main] (8) at (0.7,-0.9) {};
\node[main] (6) at (0,-1.3) {};

\node[main] (9) at (-2.2,-1.7) {};
\node[main] (10) at (2.2,-1.7) {};

\draw (1) -- (9);
\draw (1) -- (10);
\draw (9) -- (10);

\draw (1) -- (2);
\draw (9) -- (7);
\draw (10) -- (8);

\draw (5) -- (2);
\draw (5) -- (3);
\draw (5) -- (4);
\draw (5) -- (6);
\draw (5) -- (7);
\draw (5) -- (8);

\draw (2) -- (3);
\draw (2) -- (4);
\draw (3) -- (7);
\draw (4) -- (8);
\draw (6) -- (7);
\draw (6) -- (8);

    \end{tikzpicture}
    \caption{ \small{A drawing of the Golomb graph.}}
    \label{fig:Golomb graph drawing}
\end{figure}
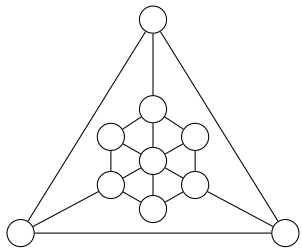

\begin{figure}
    \centering
    \includegraphics[width=0.6\linewidth]{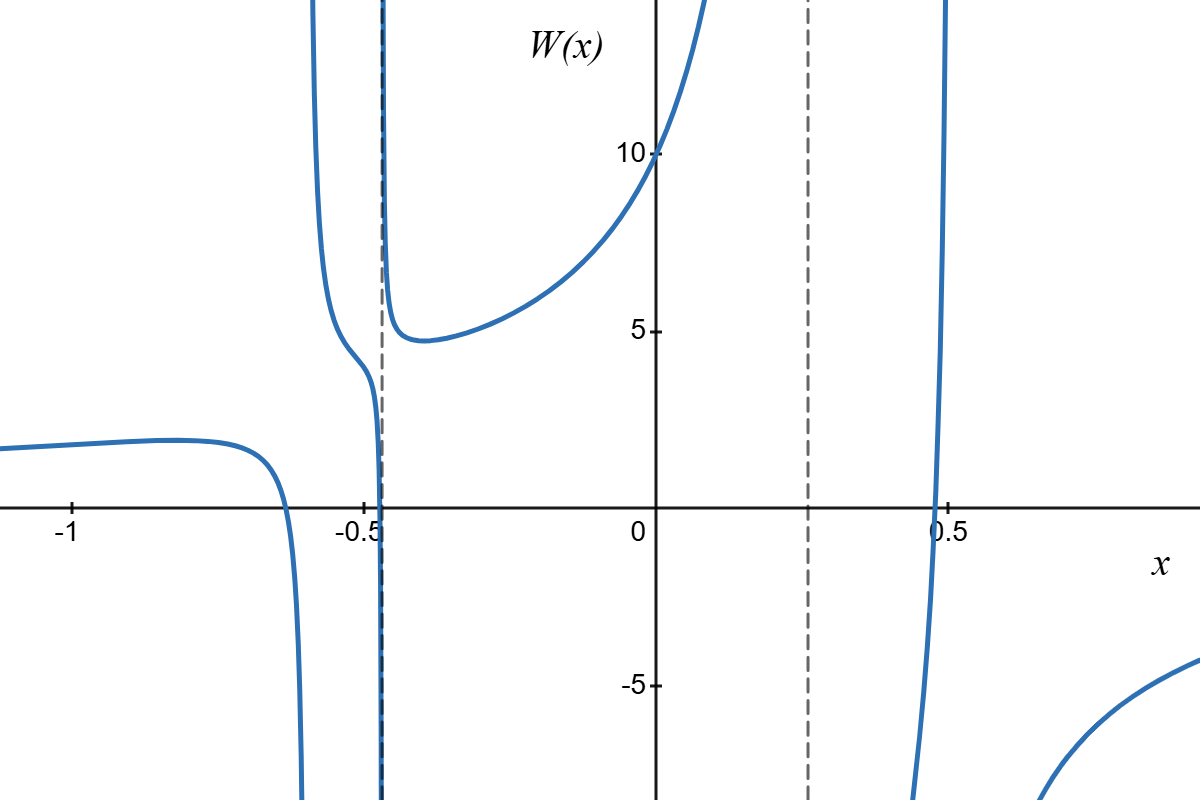}
    \caption{ \small{A plot of the (unweighted) walk-generating function $W(x)$ for the Golomb graph $G$ as a function of $x$ (See Fig. \ref{fig:Golomb graph drawing} for a drawing of the Golomb graph). The blue line is the plot of $W(x)$, while the dashed lines demarcates the interval $(\lambda_n^{-1}, \lambda_1^{-1})$. In this interval, $W(x)$ attains a minimum value of about $4.744$, which then shows $\vartheta(G) \leq 4.744$. }}
    \label{fig:Golomb graph walk-generating function}
\end{figure}

\begin{corollary}  \label{cor:precise compl generaliz}
For any graph $G$, we have
\begin{align} \label{eq:precise comple gen}
\vartheta(G) \leq \min_{\lambda_n^{-1} \leq x \leq 0} W(x) = \min_{\lambda_n^{-1} \leq x \leq 0} \sum_{j=1}^n \frac{|\langle \boldsymbol{1}, \boldsymbol{u}_j \rangle|^2}{1-\lambda_i x} \ \: ,
\end{align}
where $W(x)$ is the (un-weighted) walk-generating function of $G$, and $\{ \lambda_j \}_{j=1}^n$ are the eigenvalues of the (un-weighted) adjacency matrix for $G$ listed in decreasing order and whose corresponding eigenvectors are $\{ \boldsymbol{u}_j \}_{j=1}^n$.
\end{corollary}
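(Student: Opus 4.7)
The plan is to specialize the main characterization of $\vartheta(G)$ from Theorem~\ref{theorem:char lovasz numb} to a single well-chosen weighted adjacency matrix and then to shrink the interval of minimization. Theorem~\ref{theorem:char lovasz numb} asserts that $\inf_A \min\{W_A(x) : x \in [\lambda_{min}(A)^{-1}, \lambda_{max}(A)^{-1}]\}$ equals $\vartheta(G)$, so as soon as one fixes a single admissible $A$ the outer infimum disappears and one obtains an upper bound on $\vartheta(G)$. The first step is therefore to pick $A$ to be the ordinary (un-weighted) adjacency matrix of $G$, so that $\lambda_{max}(A) = \lambda_1$, $\lambda_{min}(A) = \lambda_n$ and $W_A(x) = W(x)$; this immediately yields
\[
\vartheta(G) \;\leq\; \min_{x \in [\lambda_n^{-1},\, \lambda_1^{-1}]} W(x).
\]
The spectral expression on the right-hand side of \eqref{eq:precise comple gen} is then just Definition~\ref{def:weighted walk-generating function} written out for this $A$.

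The second and final step is to pass from the interval $[\lambda_n^{-1}, \lambda_1^{-1}]$ to its left sub-interval $[\lambda_n^{-1}, 0]$. Since the ordinary adjacency matrix of any simple graph has zero trace, $\lambda_n \leq 0 \leq \lambda_1$, so $[\lambda_n^{-1}, 0] \subseteq [\lambda_n^{-1}, \lambda_1^{-1}]$ and minimizing over the smaller interval can only increase (i.e.\ weaken) the upper bound; combined with the first step this gives exactly the inequality claimed. The edgeless corner case $A=0$ is absorbed by the convention from Section~\ref{sec:main results} which sets both sides equal to $n$.

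As an interpretative aside — not required for the proof itself — one may observe that this interval restriction loses no information. Each summand $(1-\lambda_j x)^{-1}$ has positive second derivative on the pole-free interval $(\lambda_n^{-1}, \lambda_1^{-1})$, so $W$ is strictly convex there, and $W'(0) = \langle \boldsymbol{1}, A \boldsymbol{1}\rangle = 2|E(G)| \geq 0$, which forces the minimum of $W$ over $[\lambda_n^{-1}, \lambda_1^{-1}]$ to be attained in $[\lambda_n^{-1}, 0]$. Equivalently, the Neumann expansion $W(x) = \sum_{k \geq 0} x^k \langle \boldsymbol{1}, A^k \boldsymbol{1}\rangle$ has non-negative coefficients on $[0, \lambda_1^{-1})$, so $W$ is non-decreasing there. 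There is no real obstacle to the argument: the entire content of the Corollary is the specialization of Theorem~\ref{theorem:char lovasz numb} together with an elementary interval inclusion.
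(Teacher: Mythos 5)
Your proposal is correct and follows essentially the same path as the paper: specialize Theorem~\ref{theorem:char lovasz numb} to $A$ the unweighted adjacency matrix, then pass from $[\lambda_n^{-1},\lambda_1^{-1}]$ to $[\lambda_n^{-1},0]$. One small (and valid) simplification: you observe that for the \emph{inequality} as stated, interval inclusion alone suffices, since $\min_{[\lambda_n^{-1},0]} W \geq \min_{[\lambda_n^{-1},\lambda_1^{-1}]} W \geq \vartheta(G)$, whereas the paper goes directly to the stronger fact (which you correctly record in your aside) that strict convexity of $W$ on the pole-free interval together with $W'(0)=2|E(G)|\geq 0$ forces the two minima to be \emph{equal}. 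That equality, not just the inequality, is what gets reused later in Proposition~\ref{prop:better Hbound}, so the paper's slightly heavier phrasing is not gratuitous; still, for the corollary exactly as stated, your lighter argument is complete.
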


\begin{proof}
The corollary follows straightforwardly from Theorem \ref{theorem:char lovasz numb}, except for the fact that we can w.l.o.g. choose our interval to end at $0$ instead of at $\lambda_1^{-1}$ in the minimization. But this follows from the convexity of $W_A(x)$ when $x$ is in the allowed interval and the fact that $W_A' (0) = \langle \boldsymbol{1}, A \boldsymbol{1} \rangle = 2 |E(G)| \geq 0$. 
\end{proof}
 
As a visualized example of the bound from Corollary \ref{cor:precise compl generaliz}, a plot of $W(x)$ is given in Fig. \ref{fig:Golomb graph walk-generating function} for the case of the \emph{Golomb graph} -- see Fig. \ref{fig:Golomb graph drawing} for a drawing. Although the formula above is easy to calculate from the entries $\{\boldsymbol{u}_j\}_j$ and $\{ \lambda_j \}_j$, since this involves minimizing a strictly convex function over a bounded interval, it is not a closed-form expression of the eigenvectors and eigenvalues. However, Corollary \ref{cor:precise compl generaliz} leads to a closed-form natural generalization of the Hoffman bound on $\vartheta(G)$ if one simply chooses $x$ in \eqref{eq:precise comple gen} to be an appropriate combination of $\lambda_n^{-1}$ and $\lambda_1^{-1}$, like e.g. $x=\frac{\langle \boldsymbol{1}, P_{\lambda_1} \boldsymbol{1} \rangle}{n} \lambda_n^{-1}+\frac{n-\langle \boldsymbol{1}, P_{\lambda_1} \boldsymbol{1} \rangle}{n} \lambda_1^{-1}$, or one can simply choose $x = \frac{\langle \boldsymbol{1}, P_{\lambda_1} \boldsymbol{1} \rangle}{n} \lambda_n^{-1} $ or $x = \frac{n-\langle \boldsymbol{1}, P_{\lambda_n} \boldsymbol{1} \rangle}{n} \lambda_n^{-1} $. One can also derive even simpler natural generalizations, as displayed in the following corollary

\begin{corollary} \label{cor:imprecise simpl generalization}
For any graph $G$, we have
\begin{align} \label{eq:corollary imprecise}
\vartheta(G) \leq \frac{-n \lambda_n}{\lambda_1-\lambda_n} \frac{\langle \boldsymbol{1}, P_{\lambda_1} \boldsymbol{1} \rangle}{n} \left(1+\sqrt{\frac{\lambda_1(n-\langle \boldsymbol{1}, P_{\lambda_1} \boldsymbol{1} \rangle)}{-\lambda_n \langle \boldsymbol{1}, P_{\lambda_1} \boldsymbol{1} \rangle} } \right)^2 \ \: ,
\end{align}
provided that we have $\frac{-\lambda_n (n-\langle \boldsymbol{1}, P_{\lambda_1} \boldsymbol{1} \rangle)}{\lambda_1 \langle \boldsymbol{1}, P_{\lambda_1} \boldsymbol{1} \rangle} \leq 1$, where $\lambda_1$ is the largest eigenvalue of the (un-weighted) adjacency matrix for $G$ with corresponding eigen-projection $P_{\lambda_1}$, and $\lambda_n$ is the smallest eigenvalue. 
\end{corollary}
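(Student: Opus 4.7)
The plan is to bootstrap Corollary \ref{cor:precise compl generaliz} by majorizing $W(x)$ by a simpler two-term surrogate and then minimizing the surrogate in closed form. Write $W(x) = \sum_{j=1}^n c_j/(1-\lambda_j x)$ with $c_j := |\langle \boldsymbol{1}, \boldsymbol{u}_j \rangle|^2 \geq 0$ and $\sum_j c_j = n$, and set $a := \langle \boldsymbol{1}, P_{\lambda_1} \boldsymbol{1}\rangle = \sum_{j: \lambda_j = \lambda_1} c_j$. For any $x \leq 0$ the scalar map $\lambda \mapsto 1/(1-\lambda x)$ is non-increasing in $\lambda$ (its derivative in $\lambda$ is $x/(1-\lambda x)^2 \leq 0$), so each term with $\lambda_j \neq \lambda_1$ is dominated by the same expression evaluated at $\lambda_n$. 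Pulling the $\lambda_1$-contribution out of the sum then gives the monotone majorization
\[
W(x) \ \leq \ g(x) \ := \ \frac{a}{1-\lambda_1 x} + \frac{n-a}{1-\lambda_n x}
\qquad \text{for all } x \in [\lambda_n^{-1}, 0].
\]

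Next I would minimize $g$ on $[\lambda_n^{-1}, 0]$. Since $g'' > 0$ on the interior of this interval, $g$ is strictly convex, and solving $g'(x) = 0$ reduces, after taking positive square roots, to $(1-\lambda_1 x)/(1-\lambda_n x) = t$ with $t := \sqrt{a \lambda_1 / ((n-a)(-\lambda_n))}$; a linear rearrangement yields the unique critical point
\[
x^* \ = \ \frac{1-t}{\lambda_1 - t\lambda_n}.
\]
A short check shows $x^* \leq 0 \iff t \geq 1$, which is exactly the corollary's hypothesis $-\lambda_n(n-a)/(\lambda_1 a) \leq 1$, while $x^* \geq \lambda_n^{-1}$ follows automatically from $\lambda_n \leq \lambda_1$. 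So under the hypothesis $x^*$ is the global minimizer of $g$ on the admissible interval.

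It remains to simplify $g(x^*)$. Substitution gives $1-\lambda_1 x^* = t(\lambda_1-\lambda_n)/(\lambda_1 - t\lambda_n)$ and $1-\lambda_n x^* = (\lambda_1-\lambda_n)/(\lambda_1 - t\lambda_n)$. Introducing the shorthand $p := \sqrt{-\lambda_n a}$, $q := \sqrt{\lambda_1(n-a)}$, one checks directly from the definition of $t$ that $t \cdot (-\lambda_n) = \lambda_1 p/q$, so $\lambda_1 - t\lambda_n = \lambda_1(p+q)/q$; plugging in, the two fractions collapse cleanly into
\[
g(x^*) \ = \ \frac{p(p+q) + q(p+q)}{\lambda_1 - \lambda_n} \ = \ \frac{(p+q)^2}{\lambda_1 - \lambda_n} \ = \ \frac{-\lambda_n a}{\lambda_1 - \lambda_n}\left(1 + \frac{q}{p}\right)^2,
\]
which, after substituting back $q/p = \sqrt{\lambda_1(n-a)/(-\lambda_n a)}$ and factoring $n/n$, is exactly the right-hand side of \eqref{eq:corollary imprecise}. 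Chaining this with Corollary \ref{cor:precise compl generaliz}, namely $\vartheta(G) \leq \min_{x \in [\lambda_n^{-1}, 0]} W(x) \leq g(x^*)$, finishes the proof.

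The argument is essentially one-variable calculus; the only delicate step is the telescoping identity that turns $g(x^*)$ into $(p+q)^2/(\lambda_1 - \lambda_n)$, and this is made transparent by the substitution $p, q$. The source of the weakening compared to Corollary \ref{cor:precise compl generaliz} is the monotone majorization $W(x) \leq g(x)$, which throws away the distribution of the mass $n-a$ among the eigenvalues strictly between $\lambda_n$ and $\lambda_1$, in exchange for an explicit closed form.
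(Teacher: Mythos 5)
Your proof is correct and follows essentially the same approach as the paper's: majorize $W(x)$ on $[\lambda_n^{-1},0]$ by the two-term surrogate $g(x)=a/(1-\lambda_1 x)+(n-a)/(1-\lambda_n x)$, then minimize $g$ in closed form (your $x^*$ is exactly the $x$ chosen in the paper, and your $p,q$ substitution makes explicit the algebra the paper delegates to the reader).
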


\begin{proof}
By using the upper bound from Corollary \ref{cor:precise compl generaliz} and noting that for all $x$ in the interval $\lambda_n^{-1} \leq x \leq 0$, we have $\frac{1}{1-\lambda_j x} \leq \frac{1}{1-\lambda_n x}$ for any $1 \leq j \leq n$, we get
\begin{align}
\vartheta(G) \leq \min_{\lambda_n^{-1} \leq x \leq 0} \sum_{\lambda \in \sigma(A)}^n \frac{\langle \boldsymbol{1}, P_{\lambda} \boldsymbol{1} \rangle}{1-\lambda x} \leq \min_{\lambda_n^{-1} \leq x \leq 0} \left\{ \frac{\langle \boldsymbol{1}, P_{\lambda_1} \boldsymbol{1} \rangle}{1-\lambda_1 x}+ \sum_{\lambda \neq \lambda_1 } \frac{\langle \boldsymbol{1}, P_{\lambda} \boldsymbol{1} \rangle}{1-\lambda_n x} \right\} \notag \\
= \min_{\lambda_n^{-1} \leq x \leq 0} \left\{ \frac{\langle \boldsymbol{1}, P_{\lambda_1} \boldsymbol{1} \rangle}{1-\lambda_1 x}+ \frac{1}{1-\lambda_n x} \sum_{\lambda \neq \lambda_1 }^n \langle \boldsymbol{1}, P_\lambda \boldsymbol{1} \rangle  \right\} \notag \\ \label{eq:proof aproxx haffmann longline}
= \min_{\lambda_n^{-1} \leq x \leq 0} \left\{ \frac{\langle \boldsymbol{1}, P_{\lambda_1} \boldsymbol{1} \rangle}{1-\lambda_1 x}+ \frac{n-\langle \boldsymbol{1}, P_{\lambda_1} \boldsymbol{1} \rangle}{1-\lambda_n x} \right\}
\end{align}
where we have also used $\sum_{\lambda \in \sigma(A)} \langle \boldsymbol{1}, P_\lambda \boldsymbol{1} \rangle = \langle \boldsymbol{1}, \boldsymbol{1} \rangle = n$. We make the following choice for $x$ in the RHS of (\ref{eq:proof aproxx haffmann longline}) in the inequality above $$x=-\frac{1-\sqrt{\frac{-\lambda_n (n-\langle \boldsymbol{1}, P_{\lambda_1} \boldsymbol{1} \rangle)}{\lambda_1 \langle \boldsymbol{1}, P_{\lambda_1} \boldsymbol{1} \rangle}}}{-\lambda_n+\lambda_1 \sqrt{\frac{-\lambda_n (n-\langle \boldsymbol{1}, P_{\lambda_1} \boldsymbol{1} \rangle)}{\lambda_1 \langle \boldsymbol{1}, P_{\lambda_1} \boldsymbol{1} \rangle}}}$$
It can be checked that this $x$ indeed belong to the allowed interval, where one must here use the assumption $\frac{-\lambda_n (n-\langle \boldsymbol{1}, P_{\lambda_1} \boldsymbol{1} \rangle)}{\lambda_1 \langle \boldsymbol{1}, P_{\lambda_1} \boldsymbol{1} \rangle} \leq 1$, and that it in fact is the unique minimizer of the relevant function in the allowing interval. Plugging this value for $x$ into \eqref{eq:proof aproxx haffmann longline} proves the theorem.
\end{proof}

Note that by the Cauchy-Schwarz inequality, we always have $\langle \boldsymbol{1}, P_{\lambda_1} \boldsymbol{1} \rangle \leq n$, meaning that \eqref{eq:corollary imprecise} is well-defined. For a connected graph, we get $\langle \boldsymbol{1}, P_{\lambda_1} \boldsymbol{1} \rangle = |\langle \boldsymbol{1}, \boldsymbol{u}_1 \rangle|^2$ and the condition $\frac{-\lambda_n (n-\langle \boldsymbol{1}, P_{\lambda_1} \boldsymbol{1} \rangle)}{\lambda_1 \langle \boldsymbol{1}, P_{\lambda_1} \boldsymbol{1} \rangle} \leq 1$, which is equivalent to $|\langle \boldsymbol{1}, \boldsymbol{u}_1 \rangle|^2 \geq \frac{-\lambda_n n}{\lambda_1-\lambda_n}$, is then expected to be always satisfied, or at least very rarely violated. If one however wants a bound that is guaranteed to hold for all graphs, then one can by the methods used above also prove a corresponding simple bound in case $\frac{-\lambda_n (n-\langle \boldsymbol{1}, P_{\lambda_1} \boldsymbol{1} \rangle)}{\lambda_1 \langle \boldsymbol{1}, P_{\lambda_1} \boldsymbol{1} \rangle} \geq 1$ using more complicated upper bounds that deals with the case when the minimizing $x$ from \eqref{eq:proof aproxx haffmann longline} is greater than $0$.

The fact that all the mentioned natural generalizations of the Hoffman bound reduced to $\frac{-n \lambda_n}{\lambda_1 - \lambda_n}$ in the case of regular graphs follows from the fact that if $G$ is regular, we have $\langle \boldsymbol{1}, P_{\lambda_j} \boldsymbol{1} \rangle = n \delta_{1 j}$. Thus, the bound from Corollary \ref{cor:precise compl generaliz} e.g. reduces to $ \vartheta(G) \leq \min_{\lambda_n^{-1} \leq x \leq 0}  \frac{n}{1-\lambda_1 x} = \frac{n}{1-\frac{\lambda_1}{\lambda_n}}$ as desired, and similarly for the other bounds. 
\\
\\
As mentioned in the Introduction, the natural generalization of the Hoffman bound offered by Corollary \ref{cor:precise compl generaliz} will always be better or equal to the bound \eqref{eq:bachoc bound theta}. This is the content of the following proposition. 
\begin{prop} \label{prop:better Hbound}
For any graph $G$, we have 
\begin{align*}
\min_{\lambda_n^{-1} \leq x \leq 0} W(x) \leq n \left( 1 - \frac{\delta}{\mu_1}\right) \ \: ,
\end{align*}
where $W(x)$ is the (un-weighted) walk-generating function, $\delta$ is the minimum degree, $\mu_1$ is the largest Laplacian eigenvalue and $\lambda_n$ is the smallest adjacency eigenvalue of $G$.
\end{prop}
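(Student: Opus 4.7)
The plan is to evaluate $W(x)$ at the explicit point $x_0 := -1/(\mu_1 - \delta)$ and establish the bound directly. The case $\delta = 0$ is trivial, since then the right-hand side is $n$ and $W(0) = n$ already suffices; so I assume $\delta > 0$, which in particular forces $G$ to have an edge and hence $\mu_1 \geq d_{\max} + 1 > \delta$, so $s_0 := 1/(\mu_1 - \delta)$ is well-defined. First I would verify admissibility of $x_0$: the Laplacian inequality $L \preceq \mu_1 I$ rearranges to $A = D - L \succeq D - \mu_1 I \succeq (\delta - \mu_1)I$, whence $\lambda_n \geq -(\mu_1 - \delta)$, so $|\lambda_n| \leq \mu_1 - \delta$ and therefore $x_0 \in [\lambda_n^{-1}, 0]$.

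The heart of the argument is to bound the quadratic form $\langle \boldsymbol{u}, L \boldsymbol{u}\rangle$ from two sides, where $\boldsymbol{u} := (I - x_0 A)^{-1}\boldsymbol{1}$ and $W := W(x_0) = \langle \boldsymbol{1}, \boldsymbol{u}\rangle$; here $\boldsymbol{u}$ plays the role of a ``generalized independent set,'' in analogy with the classical derivation of $\alpha(G) \leq n(1-\delta/\mu_1)$ applied to the indicator of a maximum independent set. Decomposing $\boldsymbol{u} = (W/n)\boldsymbol{1} + \boldsymbol{w}$ with $\boldsymbol{w}\perp\boldsymbol{1}$, together with $L\boldsymbol{1}=0$ and $L \preceq \mu_1 I$, gives
\[
\langle \boldsymbol{u}, L\boldsymbol{u}\rangle \;=\; \langle \boldsymbol{w}, L\boldsymbol{w}\rangle \;\leq\; \mu_1 |\boldsymbol{w}|^2 \;=\; \mu_1\bigl(|\boldsymbol{u}|^2 - W^2/n\bigr).
\]
Simultaneously $\langle \boldsymbol{u}, L\boldsymbol{u}\rangle = \langle \boldsymbol{u}, D\boldsymbol{u}\rangle - \langle \boldsymbol{u}, A\boldsymbol{u}\rangle \geq \delta|\boldsymbol{u}|^2 - \langle \boldsymbol{u}, A\boldsymbol{u}\rangle$, while the defining identity $(I + s_0 A)\boldsymbol{u} = \boldsymbol{1}$ supplies $\langle \boldsymbol{u}, A\boldsymbol{u}\rangle = (W - |\boldsymbol{u}|^2)/s_0$.

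Chaining these three ingredients and rearranging yields
\[
\mu_1 W^2 / n \;\leq\; \bigl[(\mu_1-\delta) - 1/s_0\bigr]\,|\boldsymbol{u}|^2 \;+\; W/s_0,
\]
and the key virtue of the choice $s_0 = 1/(\mu_1 - \delta)$ is that it exactly annihilates the coefficient of the otherwise uncontrolled quantity $|\boldsymbol{u}|^2$. The inequality collapses to $\mu_1 W^2/n \leq W(\mu_1 - \delta)$, delivering $W \leq n(\mu_1-\delta)/\mu_1 = n(1-\delta/\mu_1)$, which is the desired bound.

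The one subtlety I anticipate is the boundary case $|\lambda_n| = \mu_1 - \delta$, where $x_0$ coincides with $\lambda_n^{-1}$ and $(I - x_0 A)$ may be singular. I would handle this by running the same computation at $s < s_0$: the coefficient $(\mu_1-\delta) - 1/s$ is then strictly negative, so dropping the nonnegative term $|\boldsymbol{u}|^2$ only weakens the inequality to $W(-s) \leq n/(\mu_1 s)$, whose limit as $s \uparrow s_0$ is $n(1-\delta/\mu_1)$; by continuity and convexity of $W$ on the admissible interval this still forces $\min_x W(x) \leq n(1-\delta/\mu_1)$. The main hurdle here is strategic rather than technical: spotting the precise $x_0$ at which the degree-based lower bound and the Laplacian-spectral upper bound on $\langle \boldsymbol{u}, L\boldsymbol{u}\rangle$ exactly balance; once identified, everything else reduces to linear algebra.
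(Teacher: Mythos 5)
Your proof is correct, and it takes a genuinely different route from the paper's. The paper first establishes (Proposition \ref{prop:existence of optimizer vector}) that at the minimizing $y$ of $W_A$ there is a vector $\boldsymbol{u}$ with $|\boldsymbol{u}|^2 = W_A(y)$, $\langle\boldsymbol{1},\boldsymbol{u}\rangle = |\boldsymbol{u}|^2$ \emph{and} $\langle\boldsymbol{u},A\boldsymbol{u}\rangle = 0$, so when it sandwiches $\langle\boldsymbol{u},L\boldsymbol{u}\rangle$ between $\delta|\boldsymbol{u}|^2$ and $\mu_1(1-|\boldsymbol{u}|^2/n)|\boldsymbol{u}|^2$ the adjacency term is already gone, and the Hoffman-type algebra is immediate. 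You instead evaluate $W$ at the explicit point $x_0 = -1/(\mu_1-\delta)$, where $\langle\boldsymbol{u},A\boldsymbol{u}\rangle$ does \emph{not} vanish; your observation is that it can be expressed as $(W-|\boldsymbol{u}|^2)/s_0$ from the resolvent identity, and that the specific choice of $s_0$ makes the coefficient of the otherwise uncontrolled $|\boldsymbol{u}|^2$ cancel exactly. The Laplacian sandwich $\delta I \preceq D$ and $\langle\boldsymbol{w},L\boldsymbol{w}\rangle \leq \mu_1|\boldsymbol{w}|^2$ for $\boldsymbol{w}\perp\boldsymbol{1}$ is common to both. What your route buys is self-containment: you need nothing beyond elementary linear algebra and never invoke the optimizer-existence machinery, and you get a clean explicit admissibility check $\lambda_n \geq -(\mu_1-\delta)$ from $A \succeq D - \mu_1 I$. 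What the paper's route buys is conceptual alignment with the theme of the paper --- the bound is certified by the very vector realizing the spherical independence number --- and it automatically locates the minimizer rather than just a sub-level point. Your treatment of the boundary case $x_0 = \lambda_n^{-1}$ by taking $s \uparrow s_0$ and observing that the $|\boldsymbol{u}|^2$-coefficient becomes strictly negative (hence droppable) is also sound.
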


\begin{proof}
It follows from $W(x)$ being strictly convex in the interval $(\lambda_n^{-1}, \lambda_1^{-1})$ and the fact that $W ' (0) = \langle \boldsymbol{1}, A \boldsymbol{1} \rangle = 2 |E(G)| \geq 0$ that $\min_{\lambda_n^{-1} \leq x \leq 0} W(x) = \min_{\lambda_n^{-1} \leq x \leq \lambda_1^{-1}} W(x)$. It can be seen from the proof of Theorem \ref{theorem:char lovasz numb} that we have equality between the spherical independence number and the expression involving walk-generating functions even if we fix a weighted adjacency matrix $A$ instead of taking an infimum over them, i.e. the second equality in Theorem \ref{theorem:char lovasz numb} holds even if we remove $\inf_A$ from both sides. However, for our purposes here, it is sufficient to note that by proposition \ref{prop:existence of optimizer vector} there will for any $A$ always exist a real vector $\boldsymbol{u}$ satisfying $|\boldsymbol{u}|^2 = \min_{\lambda_{min}(A)^{-1} \leq x \leq \lambda_{max}(A)^{-1}} W_A (x)$, $|\boldsymbol{u}|^2 = \langle \boldsymbol{1}, \boldsymbol{u} \rangle$ and $\langle \boldsymbol{u}, A \boldsymbol{u} \rangle = 0$, which is substantially easier to prove. We here then simply take $A$ to be the unweighted adjacency matrix for $G$, and let $\boldsymbol{u}$ be such a vector satisfying the conditions just mentioned. Since the Laplacian $L= D - A$ is a real symmetric, hence Hermitian matrix, we must have
\begin{align}
\left\langle \boldsymbol{u}-\frac{|\boldsymbol{u}|^2}{n}\boldsymbol{1}, L \left( \boldsymbol{u}-\frac{|\boldsymbol{u}|^2}{n}\boldsymbol{1} \right) \right\rangle \ \leq \ &  \mu_1 \left| \boldsymbol{u}-\frac{|\boldsymbol{u}|^2}{n}\boldsymbol{1} \right|^2 = \mu_1 \left( |\boldsymbol{u}|^2 - 2 \frac{|\boldsymbol{u}|^2}{n} \langle \boldsymbol{1}, \boldsymbol{u} \rangle + \frac{|\boldsymbol{u}|^4}{n} \right) \label{eq:proof of better bound l1} \\ \label{eq:proof of better bound l2}
& \quad = \left(1-\frac{|\boldsymbol{u}|^2}{n} \right) \mu_1 |\boldsymbol{u}|^2 \ \ ,
\end{align}
where we have used $\langle \boldsymbol{1}, \boldsymbol{u} \rangle = |\boldsymbol{u}|^2$ in going from line \eqref{eq:proof of better bound l1} to \eqref{eq:proof of better bound l2}. It is easy to see that the Laplacian $L$ by construction is symmetric and annihilates the vector $\boldsymbol{1}$, i.e. $L \boldsymbol{1} = 0 $ and $\boldsymbol{1}^T L = 0$. Using this fact, we get 
\begin{align} \label{eq:proof of better bound l3}
\left\langle \boldsymbol{u}-\frac{|\boldsymbol{u}|^2}{n}\boldsymbol{1}, L \left( \boldsymbol{u}-\frac{|\boldsymbol{u}|^2}{n}\boldsymbol{1} \right) \right\rangle = \langle \boldsymbol{u}, L \boldsymbol{u} \rangle = \langle \boldsymbol{u}, (D-A) \boldsymbol{u} \rangle = \langle \boldsymbol{u}, D \boldsymbol{u} \rangle
\end{align}
where we have used that $\boldsymbol{u}$ satisfies $\langle \boldsymbol{u}, A \boldsymbol{u} \rangle = 0$. Plugging \eqref{eq:proof of better bound l3} into \eqref{eq:proof of better bound l1} gives us
\begin{align*}
\langle \boldsymbol{u}, D \boldsymbol{u} \rangle \leq \left(1-\frac{|\boldsymbol{u}|^2}{n} \right) \mu_1 |\boldsymbol{u}|^2
\end{align*}
Now, by definition of the minimum degree $\delta$, we must have $\delta I \leq D$, in semi-definite order, which when used in the inequality above gives us
\begin{align*}
\delta |\boldsymbol{u}|^2 \leq \langle \boldsymbol{u}, D \boldsymbol{u} \rangle \leq \left(1-\frac{|\boldsymbol{u}|^2}{n} \right) \mu_1 |\boldsymbol{u}|^2 \quad \Rightarrow \quad \delta \leq \left(1-\frac{|\boldsymbol{u}|^2}{n} \right) \mu_1
\end{align*}
using that $\mu_1$ is positive (since $L$ is positive definite for non empty graphs) and rearranging the inequality $\delta \leq \left(1-\frac{|\boldsymbol{u}|^2}{n} \right) \mu_1$ derived above yields $|\boldsymbol{u}|^2 \leq n \left( 1 - \frac{\delta}{\mu_1} \right)$, which when combined with the properties that $\boldsymbol{u}$ was chosen to have as discussed above, we finally get
\begin{align*}
\min_{\lambda_n^{-1} \leq x \leq 0} W(x) = \min_{\lambda_n^{-1} \leq x \leq \lambda_1^{-1}} W(x) = |\boldsymbol{u}|^2 \leq n \left( 1 - \frac{\delta}{\mu_1} \right)
\end{align*}
which proves the proposition.
\end{proof}

\begin{figure}
    \centering
    \includegraphics[width=0.7\linewidth]{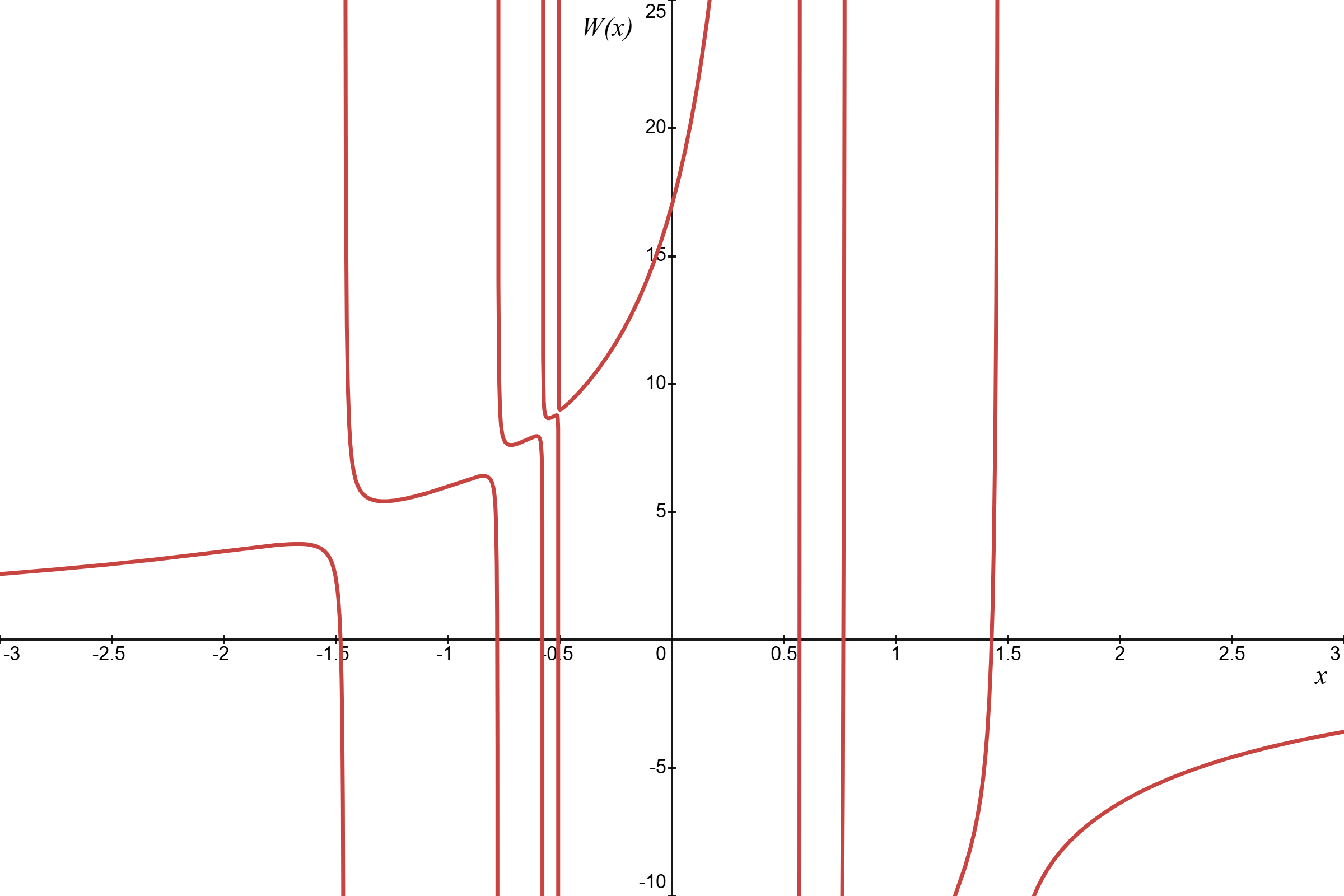}
    \caption{ \small{A plot of the (unweighted) walk-generating function $W(x)$ of the $17$ vertex path graph $P_{17}$ as a function of $x$. One sees that the critical point at which $W(x)$ attains its largest value indeed lies in the smooth interval containing $0$, i.e. $(\lambda_n^{-1}, \lambda_1^{-1})$. At this value of $x$, $W(x)=9$ which shows $\vartheta(P_{17}) \leq 9$. In fact, we here have $\alpha(P_{17}) = \vartheta(P_{17})=9$. } }
    \label{fig:P17}
\end{figure}
Lastly, we would like to call attention to an interesting analysis result, which had to be established in order to prove Theorem \ref{theorem:char lovasz numb}. Theorem \ref{lemma:app:main lemma for reciprocal functs}, proven in Appendix \ref{app:special reciprocal function}, concerns real functions $f(x)$ which can be written as finite sums of linear reciprocals functions, i.e. is of the form 
\begin{align} \label{eq:presenting reuslts reciproc funct}
f(x) = \sum_{j=1}^N \frac{\alpha_j}{1-\beta_j x}
\end{align}
with the constraint that $\alpha_j > 0$ for all $1 \leq j \leq N$. Theorem \ref{lemma:app:main lemma for reciprocal functs} then states that a function $f(x)$ of the form described above in \eqref{eq:presenting reuslts reciproc funct} has critical points, i.e. a $y \in \mathbb{R}$ for which $f'(y)=\frac{df}{dx}(y)=0$, if and only if the set $\{ \beta_j \}_{j=1}^N$ contains both strictly positive and negative elements, in which case the \emph{maximal critical point}, i.e. the critical point $y \in \mathbb{R}$ at which $f(y)$ is largest, must be located in the interval $(\beta_{min}^{-1},\beta_{max}^{-1})$, where $\beta_{min}$ and $\beta_{max}$ are the smallest and largest element of the set $\{ \beta_j \}_{j=1}^N$ respectively. In this interval, $f(x)$ is also strictly convex. This then allows us to write
\begin{align} \label{eq:presentation duality formula}
\max \left\{ f(y) \ \mid \ y \in \mathbb{R}, \ f'(y)=0  \right\} = \min \left\{ f(x) \ \mid \ \beta_{min}^{-1} \leq x \leq \beta_{max}^{-1} \right\}
\end{align}
Any weighted walk-generating function is on the form \eqref{eq:presenting reuslts reciproc funct}, and the result \eqref{eq:presentation duality formula} above can be seen to hold by inspecting Fig. \ref{fig:Golomb graph walk-generating function} in the case of the un-weighted walk-generating function for the Golomb graph. However, the result is even more apparent when e.g. looking at a plot of $W(x)$ for the $17$ vertex path graph $P_{17}$, which contains many critical points as seen in Fig. \ref{fig:P17}.

\section{Proof of the main theorem} \label{sec:proof of main results}

In this section, we prove the main theorem of this paper -- Theorem \ref{theorem:char lovasz numb}. Some tedious calculations have been moved to Appendix \ref{app:special reciprocal function}, but the arguments are besides this self-contained. Since Theorem \ref{theorem:char lovasz numb} involves two equalities, the proof of the theorem has been split up into two sections.

\subsection{Equality between Lovasz number and spherical independence number} \label{subsec:lovasz = spherical}

In this section, we prove the following lemma, showing equality between the Lovasz number and the spherical independence number.

\begin{lemma} \label{lemma:equality between lovasz and spher ind}
For all graphs $G$, we have $\vartheta(G)=\alpha_S(G)$.
\end{lemma}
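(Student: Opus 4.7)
My plan is to establish the two inequalities $\alpha_S(G) \leq \vartheta(G)$ and $\vartheta(G) \leq \alpha_S(G)$ separately. The first will follow quickly from the Lovász eigenvalue formula \eqref{eq:lovasz number max eigen def}; the second, which is the harder direction, will combine Lovász's primal semidefinite formulation with a rank-reduction principle for SDPs with few linear constraints.

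For $\alpha_S(G) \leq \vartheta(G)$, the key observation is that any matrix $B$ feasible in \eqref{eq:lovasz number max eigen def} has the form $B = J - A$ with $A$ a real symmetric weighted adjacency matrix for $G$, because the requirements $B_{ii} = 1$ and $B_{ij} = 1$ for $i \not\sim j$ match the entries of $J$ exactly. Writing $\mu = \lambda_{max}(J-A)$, the semidefinite inequality $\mu I - (J-A) \succeq 0$ tested on an arbitrary $\boldsymbol{v}$ reads
\begin{equation*}
\mu |\boldsymbol{v}|^2 \;-\; \langle \boldsymbol{1}, \boldsymbol{v}\rangle^2 \;+\; \langle \boldsymbol{v}, A \boldsymbol{v}\rangle \;\geq\; 0 \, .
\end{equation*}
On $\mathcal{S}_n$ we have $\langle \boldsymbol{1}, \boldsymbol{v}\rangle = |\boldsymbol{v}|^2$, and imposing $\langle \boldsymbol{v}, A\boldsymbol{v}\rangle = 0$ collapses this to $|\boldsymbol{v}|^2(\mu - |\boldsymbol{v}|^2) \geq 0$, i.e.\ $|\boldsymbol{v}|^2 \leq \mu$. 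Taking the supremum over $\boldsymbol{v}$ and then the infimum over $A$ yields $\alpha_S(G) \leq \vartheta(G)$.

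For the reverse direction, fix any weighted adjacency matrix $A$ and consider the auxiliary SDP
\begin{equation*}
\tau(A) \;:=\; \sup\bigl\{ \langle \boldsymbol{1}, X \boldsymbol{1}\rangle \;\big|\; X \succeq 0,\ \trace(X) = 1,\ \trace(A X) = 0 \bigr\}\, .
\end{equation*}
The Lovász primal optimizer $X^*$ (with $X^* \succeq 0$, $\trace(X^*) = 1$, $X^*_{ij} = 0$ for $ij \in E(G)$, and $\langle \boldsymbol{1}, X^* \boldsymbol{1}\rangle = \vartheta(G)$) is feasible here, because $A_{ij}$ vanishes off edges while $X^*_{ij}$ vanishes on edges, so $\trace(A X^*) = \sum_{ij} A_{ij} X^*_{ij} = 0$. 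Thus $\tau(A) \geq \vartheta(G)$. Because this SDP has only two linear equality constraints, the Pataki/Barvinok rank-reduction principle guarantees that the supremum is attained at a rank-$1$ extreme point $\hat{X} = \hat{\boldsymbol{w}} \hat{\boldsymbol{w}}^T$ with $|\hat{\boldsymbol{w}}|^2 = 1$, $\langle \hat{\boldsymbol{w}}, A \hat{\boldsymbol{w}}\rangle = 0$, and $\langle \boldsymbol{1}, \hat{\boldsymbol{w}}\rangle^2 = \tau(A)$. Setting $\boldsymbol{v} := \langle \boldsymbol{1}, \hat{\boldsymbol{w}}\rangle \hat{\boldsymbol{w}}$ then gives $\boldsymbol{v} \in \mathcal{S}_n$, $\langle \boldsymbol{v}, A \boldsymbol{v}\rangle = 0$, and $|\boldsymbol{v}|^2 = \tau(A) \geq \vartheta(G)$. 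Taking supremum in $\boldsymbol{v}$ and then infimum over $A$ produces $\alpha_S(G) \geq \vartheta(G)$.

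The main obstacle is the rank-reduction step, which is where the proof leaves the realm of elementary manipulations. If one prefers to avoid invoking Pataki by name, the argument can be done by hand: starting from any rank-$r \geq 2$ optimizer $X$, the space of symmetric perturbations supported on $\mathrm{range}(X)$ subject to the two linear conditions $\trace(E) = \trace(AE) = 0$ has dimension $\geq r(r+1)/2 - 2 > 0$; hence one can shift $X \mapsto X + \varepsilon E$ without changing the objective and scale $\varepsilon$ until an eigenvalue of $X$ vanishes, strictly reducing the rank, and iterate until the rank is one. The remaining verification that this rank-$1$ SDP value is exactly $\sup\{|\boldsymbol{v}|^2 : \boldsymbol{v}\in\mathcal{S}_n,\ \langle\boldsymbol{v}, A\boldsymbol{v}\rangle = 0\}$ is then just the scaling bookkeeping above.
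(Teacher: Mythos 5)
Your proposal is correct, and while the easy direction $\alpha_S(G) \le \vartheta(G)$ is essentially identical to the paper's (test $\lambda_{\max}(J-A)I \succeq J-A$ on $\boldsymbol{v}\in\mathcal{S}_n$ with $\langle\boldsymbol{v},A\boldsymbol{v}\rangle=0$), the hard direction is a genuinely different argument. The paper fixes $A$, passes to the scalar family $t\mapsto \lambda_{\max}(J-tA)$, shows a minimizer $t$ exists, and then proves by an explicit quadratic perturbation estimate on $\lambda_{\max}$ that the top eigenspace of $J-tA$ at the minimizing $t$ must contain a vector $\boldsymbol{u}$ with $\langle\boldsymbol{u},A\boldsymbol{u}\rangle=0$, which after rescaling lands in $\mathcal{S}_n$ with the right norm. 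You instead go to the Lovász \emph{primal} SDP, relax the $|E(G)|$ off-diagonal zero constraints to the single scalar constraint $\trace(AX)=0$ (which the primal optimizer $X^\ast$ still satisfies since $A$ is supported only on edges), and invoke the Pataki/Barvinok rank bound to extract a rank-one optimizer $\hat{\boldsymbol{w}}\hat{\boldsymbol{w}}^T$ whose rescaling is the desired $\boldsymbol{v}\in\mathcal{S}_n$. Your route is shorter and more conceptual — it makes plain that the phenomenon is precisely low-rank optimality of a two-constraint SDP — at the cost of importing two black boxes not used in the paper: the primal characterization of $\vartheta$ and the Pataki rank theorem. The paper's route is longer but entirely self-contained at the level of linear algebra. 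One small compression in your by-hand rank-reduction sketch: you count only the constraints $\trace(E)=\trace(AE)=0$ on the perturbation direction $E$ and say the shift leaves the objective unchanged; strictly this requires the extra step of noting that optimality of $X$ together with two-sided feasibility of $X\pm\varepsilon E$ forces $\langle\boldsymbol{1},E\boldsymbol{1}\rangle=0$, which is standard but should be said explicitly if you spell the argument out.
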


 \begin{proof} 
We shall first prove $\vartheta(G) \geq \alpha_S(G)$ and then afterwards prove $\vartheta(G) \leq \alpha_S(G)$. To do this, we shall use equation \eqref{eq:lovasz number max eigen def} which states that the Lovasz theta function $\vartheta(G)$ for any $G$ equals the minimum of the largest eigenvalue of any symmetric matrix $B$, whose components $\{B_{ij}\}_{i,j=1}^n$ satisfies $B_{ij}=1$ if $i=j$ or if $i \not\sim j$ in $G$. This is equivalent to 
\begin{align} \label{eq:proof of spherical ind:formula fir vartheta}
\vartheta(G) = \inf_{A} \lambda_{max} (J-A) \ ,
\end{align}
where again $J$ is the matrix of all $1$'s and $A$ ranges over all real symmetric weighted adjacency matrices for $G$.

\vspace{1cm}
\emph{Proof of} \ \ $\vartheta(G) \ \geq \ \alpha_S(G)$:
\vspace{0.8cm}

Let $A$ be a weighted adjacency matrix for $G$. Consider now any $\boldsymbol{v} \in \mathcal{S}_n$ satisfying $\langle \boldsymbol{v}, A \boldsymbol{v} \rangle =0$. Such a $\boldsymbol{v}$ will always exist, consider e.g. the indicator function for a single vertex. Since $J-A$ is a Hermitian matrix, we have by linearity and definition of $\mathcal{S}_n$
\begin{align*}
\lambda_{max}(J-A) |\boldsymbol{v}|^2 \geq \langle \boldsymbol{v}, (J-A) \boldsymbol{v} \rangle = \langle \boldsymbol{v}, J \boldsymbol{v} \rangle =| \langle \boldsymbol{1} , \boldsymbol{v} \rangle |^2 = |\boldsymbol{v}|^4 \\
\Rightarrow \lambda_{max}(J-A) \geq |\boldsymbol{v}|^2
\end{align*}
where we have used $J=\boldsymbol{1} \boldsymbol{1}^\dagger$ and that $\boldsymbol{v} \in \mathcal{S}_n $ entails $\langle \boldsymbol{1}, \boldsymbol{v} \rangle =|\boldsymbol{v}|^2$. Since $\boldsymbol{v}$ above was arbitrary, we get 
\begin{align*}
\lambda_{max}(J-A) \geq \sup_{\boldsymbol{v} \in \mathcal{S}_n} \left\{ |\boldsymbol{v}|^2 \ \middle| \ \langle \boldsymbol{v} , A \boldsymbol{v} \rangle =0 \right\} 
\end{align*}
and taking the infimum over all real symmetric weighted adjacency matrices $A$ in the inequality above proves the result by definition \ref{def:spherical ind number} of the spherical independence number.

\vspace{1cm}
\emph{Proof of} \ \ $\vartheta(G) \ \leq \ \alpha_S(G)$:

\vspace{0.8cm}
Let $A$ again be any real symmetric weighted adjacency matrix for $G$. We shall proceed to show that there will always exist another real symmetric weighted adjacency matrix $A'$ for $G$ such that $\lambda_{max}(J-A') \leq \sup_{\boldsymbol{v} \in \mathcal{S}_n} \left\{ |\boldsymbol{v}|^2 \ \middle| \ \langle \boldsymbol{v} , A \boldsymbol{v} \rangle =0 \right\}$, which will prove the result by \eqref{eq:proof of spherical ind:formula fir vartheta} and the definitions of the infimum and spherical independence number.

Note that for any $t \in \mathbb{R}$, $t A$ is again a real symmetric weighted adjacency matrix for $G$, and consider now $\lambda_{max}(J-t A)$ as a continuous real function of $t$ -- it is continuous by the continuity of the numerical range of Hermitian matrices under Hermitian perturbations. $\lambda_{max}(J-t A)$ obtains a minimal value for some (possibly non-unique) value of $t$, since either $A=0$ and the resulting function is constant, or $A \neq 0$ in which case $A$ has strictly positive and negative eigenvalues, due to $\trace A =0$, hence $\lambda_{max}(J-t A) \rightarrow + \infty$ as $t \rightarrow \pm \infty$. Meanwhile, $\lambda_{max}(J-t A)$ is also bounded from below since $\trace(J-t A) = \trace(J)= n$.

Let then $t$ be such that $\lambda_{max}(J-t A)$ obtains a minimal value at $t$. Thus, for any $\delta t \in \mathbb{R}$, we have 
\begin{align} \label{eq:Max eig min constraint}
\lambda_{max}(J-(t+\delta t) A) \geq \lambda_{max}(J-t A).
\end{align}
We shall now construct a $\boldsymbol{v} \in \mathcal{S}_n$ satisfying $\langle \boldsymbol{v}, A \boldsymbol{v} \rangle = 0$ and $\lambda_{max}(J-t A) \leq |\boldsymbol{v}|^2$, by choosing an eigenvector $\boldsymbol{u}$ of $J-t A$ with eigenvalue $\lambda_{max}(J-t A)$ for which $\langle \boldsymbol{u}, A \boldsymbol{u} \rangle = 0$. Such a $\boldsymbol{u} \in E_{max}(J-t A)$ must indeed exists, which follows from $t$ minimizing $\lambda_{max}(J-t A)$. This can either be proven by first-order perturbation theory for Hermitian matrices or by the following argument:
\\
\\
Suppose for contradiction that we have $\langle \boldsymbol{u}, A \boldsymbol{u} \rangle \neq 0$ for all non-zero $\boldsymbol{u} \in E_{max}(J-t A)$. By considering potential linear combination of vectors in $E_{max}(J-t A)$, it is easy to show that we must then either have $\langle \boldsymbol{u}, A \boldsymbol{u} \rangle > 0$ for all $\boldsymbol{u} \in E_{max}(J-t A)$ or $\langle \boldsymbol{u}, A \boldsymbol{u} \rangle < 0$ for all $\boldsymbol{u} \in E_{max}(J-t A)$. We can w.l.o.g. assume that the first option occurs, i.e. we have $\langle \boldsymbol{u}, A \boldsymbol{u} \rangle > 0$ for all $\boldsymbol{u} \in E_{max}(J-t A)$, since all following arguments will be identical in the other case, except that we change the sign of $\delta t$. Due to the sphere $\left\{ \boldsymbol{u} \in E_{max}(J-t A) \mid | \boldsymbol{u} |^2=1  \right\}$ being compact and the map $\boldsymbol{u} \mapsto \langle \boldsymbol{u}, A \boldsymbol{u} \rangle$ being continuous, the image of this map in $\mathbb{R}$ is also compact, meaning that it is a closed and bounded interval. And since $0$ is not in this image, there must exist a constant $K > 0$ such that $\langle \boldsymbol{u}, A \boldsymbol{u} \rangle > K$ for all $\boldsymbol{u} \in E_{max}(J-t A) $ with norm $| \boldsymbol{u} |^2=1$. Hence, by linearity, for any $\boldsymbol{u} \in E_{max}(J-t A) $ we have $\langle \boldsymbol{u}, A \boldsymbol{u} \rangle \geq K |\boldsymbol{u}|^2$.

We now want to upper bound $\lambda_{max}(J-(t+\delta t) A) $ to get a contradiction from \eqref{eq:Max eig min constraint}. Note first that since $J-t A$ is Hermitian, its eigenvectors form an ortho-normal basis, hence any unit-vector $\boldsymbol{w}$ can be written as $\boldsymbol{w}=\boldsymbol{u}+\boldsymbol{u}^\perp$ with $\boldsymbol{u} \in E_{max}(J-t A)$, $\boldsymbol{u}^\perp \in E_{max}(J-t A)^\perp$ and $1=|\boldsymbol{w}|^2=|\boldsymbol{u}|^2+|\boldsymbol{u}^\perp|^2$. Thus, if we choose $\delta t$ to be positive, we have $\delta t \langle \boldsymbol{u}, A \boldsymbol{u} \rangle \geq \delta t K |\boldsymbol{u}|^2$, leading to
\begin{align}
\langle \boldsymbol{w}, (J-(t+\delta t) A) \boldsymbol{w} \rangle = \lambda_{max}|\boldsymbol{u}|^2 + \langle \boldsymbol{u}^\perp, (J-t A) \boldsymbol{u}^\perp \rangle - \delta t \langle \boldsymbol{u}+\boldsymbol{u}^\perp , A (\boldsymbol{u}+\boldsymbol{u}^\perp) \rangle \notag \\
\leq \lambda_{max}(J-t A)-(\lambda_{max}(J-tA)-\lambda_2 (J- t A)) |\boldsymbol{u}^\perp|^2 -  \delta t K |\boldsymbol{u}|^2 - \delta t \langle \boldsymbol{u}^\perp , A (2 \boldsymbol{u}+\boldsymbol{u}^\perp) \rangle \notag \\
\leq \lambda_{max}(J-t A) - C |\boldsymbol{u}^\perp|^2-\delta t K(1-|\boldsymbol{u}^\perp|^2) +\left| \delta t  || A || \, |\boldsymbol{u}^\perp| |2 \boldsymbol{u}+\boldsymbol{u}^\perp| \right| \notag \\ \label{eq:proof lovasz = spherical step 2}
\leq \lambda_{max}(J-t A) -\delta t K- (C- \delta t  K) |\boldsymbol{u}^\perp|^2+2 \delta t || A || \, |\boldsymbol{u}^\perp| 
\end{align}
where we have denoted the second largest eigenvalue of $J-t A$ as $\lambda_2(J-t A)$ and defined $C := \lambda_{max}(J-tA)-\lambda_2 (J- t A) > 0$, and used $1=|\boldsymbol{u}|^2+|\boldsymbol{u}^\perp|^2$ and the Cauchy-Schwarz inequality. For $\delta t < C / K$, the RHS of \eqref{eq:proof lovasz = spherical step 2} can be upper bounded by finding the maximum of the relevant parabola in the variable $|\boldsymbol{u}^\perp| \in [0,1]$, which then gives us that for any unit-vector $\boldsymbol{w}$ and $ C / K > \delta t > 0$, we have
\begin{align*}
\langle \boldsymbol{w}, (J-(t+\delta t) A) \boldsymbol{w} \rangle \leq \lambda_{max}(J-t A) -\delta t K+\frac{ ||A||^2 }{C-\delta t K} \delta t^2 \qquad \text{for all}  \ \boldsymbol{w}\in \mathbb{R}^n , \ |\boldsymbol{w}|=1 \\
\Rightarrow \lambda_{max}(J-(t+\delta t)A) \leq \lambda_{max}(J-t A) - K \delta t +\frac{ ||A||^2 }{C- K \delta t } \delta t^2
\end{align*}
which follows by taking a supremum over $\boldsymbol{\omega}$. However this contradicts \eqref{eq:Max eig min constraint}, since the inequality above entails that for sufficiently small positive $\delta t$, we have $\lambda_{max}(J-(t+\delta t)A) < \lambda_{max}(J-t A)$. 

Thus, we conclude that there must exists a non-zero vector $\boldsymbol{u} \in E_{max}(J-t A)$ which satisfies $\langle \boldsymbol{u}, A \boldsymbol{u} \rangle =0$. We can now re-scale $\boldsymbol{u}$ to get a vector in $\mathcal{S}_n$ by defining $\boldsymbol{v}=\frac{ \langle \boldsymbol{u}, \boldsymbol{1} \rangle}{|\boldsymbol{u}|^2} \boldsymbol{u}$. Note now that since we have $\langle \boldsymbol{u} , A \boldsymbol{u} \rangle =0$, hence also $\langle \boldsymbol{v} , t A \boldsymbol{v} \rangle= \langle \boldsymbol{v} , A' \boldsymbol{v} \rangle =0$, the fact that $(J-A') \boldsymbol{v}=\lambda_{max}(J-A') \boldsymbol{v}$ gives us 
\begin{align}
\lambda_{max}(J-A') |\boldsymbol{v}|^2 = \langle \boldsymbol{v}, (J-A') \boldsymbol{v} \rangle  = \langle \boldsymbol{v}, J \boldsymbol{v} \rangle = | \langle \boldsymbol{1}, \boldsymbol{v} \rangle |^2=|\boldsymbol{v}|^4 \notag \\ \label{eq:v=lambdaMax}
\quad \Rightarrow \ |\boldsymbol{v}|^2=\lambda_{max}(J-A') \qquad \text{for some} \ \boldsymbol{v} \in \mathcal{S}_n  \ \text{satisfying} \ \langle \boldsymbol{v} , A \boldsymbol{v} \rangle =0 
\end{align} 
Thus, for any real symmetric adjacency matrix $A$ of $G$, there exists another real symmetric adjacency matrix $A'$ s.t.
\begin{align} \label{eq:SYM inf leq sup1}
\lambda_{max}(J-A') \leq \sup_{\boldsymbol{v} \in \mathcal{S}_n} \left\{ |\boldsymbol{v}|^2 \ \middle| \ \langle \boldsymbol{v} , A \boldsymbol{v} \rangle =0 \right\} \\ \label{eq:SYM inf leq sup2}
\Rightarrow \quad  \inf_{A''} \lambda_{max}(J-A'') \leq \sup_{\boldsymbol{v} \in \mathcal{S}_n} \left\{ |\boldsymbol{v}|^2 \ \middle| \ \langle \boldsymbol{v} , A \boldsymbol{v} \rangle =0 \right\}
\end{align}
where the infimum in \eqref{eq:SYM inf leq sup2} is taken over all real symmetric adjacency matrices for $G$. The inequality (\ref{eq:SYM inf leq sup1}) follows from the definition of the infimum while the second inequality follows from the definition of the supremum and (\ref{eq:v=lambdaMax}). Taking an infimum over all real symmetric $A$ in eq. (\ref{eq:SYM inf leq sup2}) now proves $\vartheta(G) \leq \alpha_{\mathcal{S}}(G)$.
\end{proof}

\subsection{Equality between spherical independence number and the expression involving walk-generating functions} \label{subsec:spherical=walk gen}

In this section, we shall prove the following Lemma, thus proving equality between the spherical independence number and the expression from Theorem \ref{theorem:char lovasz numb} involving weighted walk-generating functions.

\begin{lemma} \label{lemma:2nd equality}
For all graphs $G$ we have
\begin{align*} 
\alpha_{\mathcal{S}}(G) = \inf_A \min \left\{ W_A(x) \ \middle| \ \lambda_{min}(A)^{-1} \leq x \leq \lambda_{max}(A)^{-1} \right\} \ \: ,
\end{align*}
where the infimum is taken over all real symmetric adjacency matrices $A$ of $G$.
\end{lemma}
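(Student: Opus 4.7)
The plan is to establish the per-matrix equality
\[
\sup\left\{|\boldsymbol{v}|^2 \ \middle| \ \boldsymbol{v} \in \mathcal{S}_n, \ \langle \boldsymbol{v}, A\boldsymbol{v}\rangle = 0\right\} \ = \ \min_{x \in [\lambda_{min}(A)^{-1}, \lambda_{max}(A)^{-1}]} W_A(x)
\]
for each real symmetric weighted adjacency matrix $A$ of $G$; the lemma then follows by taking the infimum over $A$ on both sides. The case $A=0$ is absorbed into the convention stated after Theorem \ref{theorem:char lovasz numb}, and for $A \neq 0$ the identity $\trace A = 0$ forces $\lambda_{min}(A) < 0 < \lambda_{max}(A)$.

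For the "$\leq$" inequality I would argue by Cauchy--Schwarz. On the open interval $(\lambda_{min}(A)^{-1}, \lambda_{max}(A)^{-1})$ the operator $I - xA$ is positive definite with self-adjoint square root, and writing
\[
\langle \boldsymbol{1}, \boldsymbol{v}\rangle \ = \ \bigl\langle (I-xA)^{-1/2}\boldsymbol{1}, \ (I-xA)^{1/2}\boldsymbol{v}\bigr\rangle
\]
and applying Cauchy--Schwarz gives $\langle \boldsymbol{1}, \boldsymbol{v}\rangle^2 \leq W_A(x)\bigl(|\boldsymbol{v}|^2 - x \langle \boldsymbol{v}, A\boldsymbol{v}\rangle\bigr)$. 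Substituting the sphere constraint $\langle \boldsymbol{1}, \boldsymbol{v}\rangle = |\boldsymbol{v}|^2$ and the orthogonality constraint $\langle \boldsymbol{v}, A\boldsymbol{v}\rangle = 0$ collapses this to $|\boldsymbol{v}|^4 \leq W_A(x)|\boldsymbol{v}|^2$, hence $|\boldsymbol{v}|^2 \leq W_A(x)$. Taking a supremum over $\boldsymbol{v}$ and an infimum over $x$ (the endpoints being accommodated by the convention in Definition \ref{def:weighted walk-generating function}) yields the desired bound.

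For the reverse inequality I would construct an explicit maximiser. Applying Theorem \ref{lemma:app:main lemma for reciprocal functs} to the representation $W_A(x) = \sum_\lambda \frac{\langle \boldsymbol{1}, P_\lambda \boldsymbol{1}\rangle}{1 - \lambda x}$ (after discarding the summands with $\langle \boldsymbol{1}, P_\lambda \boldsymbol{1}\rangle = 0$) supplies an interior critical point $x^*$ at which $W_A$ attains its minimum on $[\lambda_{min}(A)^{-1}, \lambda_{max}(A)^{-1}]$ by the strict convexity part of that theorem. Setting $\boldsymbol{v}^* := (I - x^*A)^{-1}\boldsymbol{1}$, the spectral decomposition of $A$ together with $(I - x^*A)\boldsymbol{v}^* = \boldsymbol{1}$ gives
\[
\langle \boldsymbol{1}, \boldsymbol{v}^*\rangle = W_A(x^*), \qquad \langle \boldsymbol{v}^*, A \boldsymbol{v}^*\rangle = W_A'(x^*) = 0,
\]
and expanding $|\boldsymbol{v}^*|^2 = \langle \boldsymbol{v}^*, (I - x^*A)\boldsymbol{v}^*\rangle + x^*\langle \boldsymbol{v}^*, A\boldsymbol{v}^*\rangle = \langle \boldsymbol{1}, \boldsymbol{v}^*\rangle = W_A(x^*)$ shows $\boldsymbol{v}^* \in \mathcal{S}_n$ and realises the supremum at the value $\min_x W_A(x)$.

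The step I expect to be most delicate is the degenerate case in which, after discarding zero-coefficient terms, the interior critical point furnished by Theorem \ref{lemma:app:main lemma for reciprocal functs} either fails to exist (when the surviving $\lambda$'s have a single sign) or lies outside $[\lambda_{min}(A)^{-1}, \lambda_{max}(A)^{-1}]$; the minimum is then attained at an endpoint $x^* = 1/\lambda^*$ with $\lambda^* \in \{\lambda_{min}(A), \lambda_{max}(A)\}$ where $I - x^*A$ is singular and the closed-form $(I - x^*A)^{-1}\boldsymbol{1}$ breaks down (this occurs already for $G = K_2$ with its standard adjacency matrix). The key observation is that necessarily $\langle \boldsymbol{1}, P_{\lambda^*}\boldsymbol{1}\rangle = 0$ in this regime, so $\boldsymbol{1}$ lies in the range of $I - x^*A$ and the equation $(I - x^*A)\boldsymbol{v}_0 = \boldsymbol{1}$ remains consistent; taking $\boldsymbol{v}_0$ to be the pseudo-inverse solution and correcting by a vector $\boldsymbol{k} \in \kernel(I - x^*A) = E_{\lambda^*}(A)$ of squared norm $|\boldsymbol{k}|^2 = -x^* W_A'(x^*) \geq 0$ produces $\boldsymbol{v}^* := \boldsymbol{v}_0 + \boldsymbol{k}$ which a short spectral calculation shows lies in $\mathcal{S}_n$, satisfies $\langle \boldsymbol{v}^*, A\boldsymbol{v}^*\rangle = W_A'(x^*)(1 - \lambda^* x^*) = 0$, and has $|\boldsymbol{v}^*|^2 = W_A(x^*)$.
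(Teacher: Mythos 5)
Your proposal is correct, and for the ``$\leq$'' direction it takes a genuinely different and substantially more elementary route than the paper. The paper establishes $\sup_{\boldsymbol{v}} |\boldsymbol{v}|^2 \leq \min_x W_A(x)$ through a fairly involved chain: a Lagrange multiplier analysis on the constraint manifold (Propositions~\ref{prop:coefficients eq}--\ref{prop:max over central strip and critical}, which requires $n\geq 3$ so the Preimage Theorem applies), a case split on whether the optimiser lies in $\kernel(A)$, and finally the duality Theorem~\ref{lemma:app:main lemma for reciprocal functs} to relate the maximal critical value of $W_A$ to its minimum on the central strip. Your Cauchy--Schwarz argument --- writing $\langle\boldsymbol{1},\boldsymbol{v}\rangle = \langle (I-xA)^{-1/2}\boldsymbol{1},(I-xA)^{1/2}\boldsymbol{v}\rangle$ on the open interval where $I-xA\succ 0$, which gives $\langle\boldsymbol{1},\boldsymbol{v}\rangle^2 \leq W_A(x)\,\langle\boldsymbol{v},(I-xA)\boldsymbol{v}\rangle$ --- collapses all of this to two lines, works uniformly in $n$, and makes no use of Theorem~\ref{lemma:app:main lemma for reciprocal functs} at all (for the ``$\geq$'' direction, which you build exactly as the paper's Proposition~\ref{prop:existence of optimizer vector}, the only fact needed is convexity and continuity of $W_A$ on the central strip, i.e.\ Proposition~\ref{lemma:app:properties of central strip}, not the full duality theorem you cite). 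Your degenerate-endpoint case is handled correctly: the observation that $\langle\boldsymbol{1},P_{\lambda^*}\boldsymbol{1}\rangle=0$ is forced (else $W_A$ would be $+\infty$ there, contradicting minimality), the sign check on $-x^*W_A'(x^*)$ follows from convexity of $W_A$ at an endpoint minimum, and the computation $\langle\boldsymbol{v}^*,A\boldsymbol{v}^*\rangle = W_A'(x^*)(1-\lambda^*x^*)=0$ closes it. The only thing you should spell out a bit more is the passage from the open interval to the closed one in the ``$\leq$'' step: since $\sup_{\boldsymbol{v}}|\boldsymbol{v}|^2$ is $x$-independent and bounded by $W_A(x)$ for all $x$ in the open interval, it is bounded by $\inf$ over the open interval, which coincides with the $\min$ over the closed interval by convexity and continuity of $W_A$ on the enclosing central strip. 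Net effect: your proof replaces the paper's optimisation-theoretic machinery (and its dependence on the standalone analysis theorem) with a one-line Cauchy--Schwarz, which is a real simplification.
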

The proof of Lemma \ref{lemma:2nd equality} is rather long, and we shall prove many intermediary results before finally proving Lemma \ref{lemma:2nd equality} at the end of this section. The final proof is found on page \pageref{eq:proof main lin alg opt real}. First, we make some general remarks. Let $A$ represent a real symmetric (hence Hermitian) weighted adjacency matrix for some graph $G$. Since $A$ is Hermitian, it is diagonalizable, and we can write it as $A = \sum_{i=1}^n \lambda_i \boldsymbol{u}_i \boldsymbol{u}_i^\dagger$, where the eigenvalues $\{ \lambda_i \}_{i=1}^n$ (listed with multiplicities) are guaranteed to be real numbers due to Hermiticity. We list them in decreasing order as 
\begin{align*}
\lambda_{min}=\lambda_n \leq \lambda_{n-1} \leq \cdots \leq \lambda_2 \leq \lambda_1=\lambda_{max}
\end{align*}
Since we assume all graphs $G$ to be simple, we have $A_{ii}=0$ for all $i$, giving $\trace(A)=0$. This entails that unless $A$ is the zero matrix $A=0$, we must have $\lambda_{min} < 0$ and $\lambda_{max} > 0$. It will actually not matter in the end, but since it is always possible to diagonalize a real symmetric matrix with real vectors, we can assume $\{ \boldsymbol{u}_i \}_{i=1}^n \subset \mathbb{R}^n$. Finally, we let the eigenvectors $\{ \boldsymbol{u}_i \}_{i=1}^n$ of $A$ be ortho-normal, i.e. $\langle \boldsymbol{u}_i , \boldsymbol{u}_j \rangle = \delta_{ij}$ for all $1 \leq i,j \leq n$.

We shall now prove the following result on our way to proving Lemma \ref{lemma:2nd equality}

\begin{prop} \label{lemma:linear alg optimization}
For any graph $G$ of order $n \geq 3$ and any real symmetric and non-zero adjacency matrix $A \neq 0$ of $G$, we have
\begin{align} \label{eq:linear alg optimization}
\sup_{\boldsymbol{v} \in \mathcal{S}_n} \left\{ |\boldsymbol{v}|^2 \ \middle| \ \langle \boldsymbol{v} , A \boldsymbol{v} \rangle =0  \right\} = \min \left\{ W_A(x) \ \middle| \ \lambda_{min}(A)^{-1} \leq x \leq \lambda_{max}(A)^{-1} \right\}
\end{align}
\end{prop}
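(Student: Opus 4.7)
The plan is to prove the two inequalities in \eqref{eq:linear alg optimization} separately, using that for $x \in (\lambda_{min}(A)^{-1}, \lambda_{max}(A)^{-1})$ the matrix $I - xA$ is positive definite (each eigenvalue $1 - x\lambda_i$ being strictly positive), and that the explicit vector $(I - xA)^{-1}\boldsymbol{1}$ serves as a natural bridge between the two sides.

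For the direction $\sup \leq \min$, I would fix any $x$ in the open interval and any feasible $\boldsymbol{v}$ and apply the Cauchy--Schwarz inequality in the form induced by $(I - xA)^{\pm 1/2}$:
\[
\langle \boldsymbol{1}, \boldsymbol{v}\rangle \;=\; \bigl\langle (I-xA)^{-1/2}\boldsymbol{1},\, (I-xA)^{1/2}\boldsymbol{v}\bigr\rangle \;\leq\; \sqrt{W_A(x)}\cdot\sqrt{\langle \boldsymbol{v}, (I-xA)\boldsymbol{v}\rangle}.
\]
The second factor collapses to $|\boldsymbol{v}|$ thanks to the null constraint $\langle \boldsymbol{v}, A\boldsymbol{v}\rangle = 0$, and combining with the sphere condition $\langle \boldsymbol{1}, \boldsymbol{v}\rangle = |\boldsymbol{v}|^2$ forces $|\boldsymbol{v}|^2 \leq W_A(x)$ for every such $x$. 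Minimizing over $x$ in the open interval, extending to the closed interval by continuity (at endpoints where $W_A$ retains a pole, $W_A \to +\infty$, so the minimum is attained in the interior anyway), and taking the supremum over $\boldsymbol{v}$, gives this direction.

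For $\sup \geq \min$, I would invoke Theorem \ref{lemma:app:main lemma for reciprocal functs} from the appendix: since $A \neq 0$ with $\trace A = 0$ forces $\sigma(A)$ to contain both strictly positive and strictly negative elements, $W_A$ admits a critical point $x^*$ in the open interval $(\lambda_{min}(A)^{-1}, \lambda_{max}(A)^{-1})$, and by the strict convexity of $W_A$ there this $x^*$ is the unique minimizer of $W_A$ on the closed interval. I would then set $\boldsymbol{v}^* := (I - x^*A)^{-1}\boldsymbol{1}$ and verify feasibility. The null constraint is immediate:
\[
\langle \boldsymbol{v}^*, A\boldsymbol{v}^*\rangle \;=\; \langle \boldsymbol{1},\, (I-x^*A)^{-1}A(I-x^*A)^{-1}\boldsymbol{1}\rangle \;=\; W_A'(x^*) \;=\; 0.
\]
For the sphere constraint, the operator identity $(I - xA)^{-1}A = \tfrac{1}{x}\bigl[(I-xA)^{-1} - I\bigr]$ (valid for $x \neq 0$) rewrites the equation $W_A'(x) = 0$ as $\langle \boldsymbol{1}, (I - xA)^{-2}\boldsymbol{1}\rangle = W_A(x)$, which at $x = x^*$ reads $|\boldsymbol{v}^*|^2 = W_A(x^*) = \langle \boldsymbol{1}, \boldsymbol{v}^*\rangle$, placing $\boldsymbol{v}^*$ in $\mathcal{S}_n$ with $|\boldsymbol{v}^*|^2$ equal to the desired minimum.

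The main obstacle is the input from Theorem \ref{lemma:app:main lemma for reciprocal functs}, which is precisely what the appendix is devoted to; once this existence-and-location result for the critical point is in hand, the rest is a short algebraic identification. A minor edge case is $x^* = 0$, at which the identity above fails; but $W_A'(0) = \langle \boldsymbol{1}, A\boldsymbol{1}\rangle = 2|E(G)| > 0$ whenever $A \neq 0$ (since a non-zero adjacency matrix reflects a graph with edges), so any interior critical point is strictly separated from $0$ and the identity safely applies there. Degenerate spectral situations where $\langle \boldsymbol{1}, P_{\lambda_{max}(A)}\boldsymbol{1}\rangle$ or $\langle \boldsymbol{1}, P_{\lambda_{min}(A)}\boldsymbol{1}\rangle$ vanishes can be handled by a limiting argument, or equivalently by applying the appendix theorem to the reduced sum obtained after discarding vanishing residues.
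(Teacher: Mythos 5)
Your argument for $\sup \leq \min$ via Cauchy--Schwarz in the $(I-xA)$-weighted inner product is correct, clean, and genuinely different from the paper. The paper gets this direction through a long chain of Lagrange-multiplier analysis (Propositions \ref{prop:coefficients eq}, \ref{prop:2 cases for optimum}, \ref{prop:max over central strip and critical}, \ref{prop:lin alg opt lower bound}) together with the appendix duality theorem for reciprocal sums; your two-line Cauchy--Schwarz inequality bypasses all of that machinery, including Theorem \ref{lemma:app:main lemma for reciprocal functs} itself, and correctly handles the closed endpoints by continuity. This is a real simplification.

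The $\sup \geq \min$ direction, however, has a gap. You cite Theorem \ref{lemma:app:main lemma for reciprocal functs} as producing a critical point $x^*$ of $W_A$ in the open interval $\left(\lambda_{min}(A)^{-1}, \lambda_{max}(A)^{-1}\right)$, but the appendix theorem only locates the maximal critical point in the interval determined by the eigenvalues whose residues $\langle\boldsymbol{1}, P_\lambda\boldsymbol{1}\rangle$ are nonzero, i.e.\ in $\left(\lambda_-^{-1},\lambda_+^{-1}\right)$, which \emph{contains} $\left[\lambda_{min}(A)^{-1},\lambda_{max}(A)^{-1}\right]$ and can be strictly larger whenever $\langle\boldsymbol{1}, P_{\lambda_{min}(A)}\boldsymbol{1}\rangle=0$ or $\langle\boldsymbol{1}, P_{\lambda_{max}(A)}\boldsymbol{1}\rangle=0$ (a situation that genuinely occurs for weighted $A$, and even, via degenerate eigenvalues, for unweighted $A$ of disconnected graphs). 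When $x^*$ falls outside $\left[\lambda_{min}(A)^{-1},\lambda_{max}(A)^{-1}\right]$, the minimizer $y$ of $W_A$ over that closed interval sits at an endpoint where $I-yA$ is singular, so your candidate $(I-yA)^{-1}\boldsymbol{1}$ is not defined. Your suggested remedies do not close this: ``applying the appendix theorem to the reduced sum'' is what gives the wider interval in the first place, and the ``limiting argument'' is left entirely unworked and is not obviously benign (the candidate vectors can blow up as the perturbation is removed). The paper's Proposition \ref{prop:existence of optimizer vector} resolves precisely this case by constructing $\boldsymbol{u}=\sum_{\lambda\neq y^{-1}}\frac{P_\lambda\boldsymbol{1}}{1-\lambda y}+\sqrt{-yW_A'(y)}\,\boldsymbol{w}$ with $\boldsymbol{w}$ a unit vector in the boundary eigenspace $E_{min}(A)$ or $E_{max}(A)$, using the sign condition $-yW_A'(y)\geq 0$ that holds at a boundary minimizer. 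That explicit kernel-component step is the concrete missing ingredient in your proposal. (The $x^*=0$ edge case you flag is actually a non-issue: $\boldsymbol{v}^*=\boldsymbol{1}$ is directly verified, and your identity $\langle\boldsymbol{1},\boldsymbol{v}^*\rangle-|\boldsymbol{v}^*|^2=-x^*W_A'(x^*)$ already holds without restriction. On the other hand your stated equality $\langle\boldsymbol{1},A\boldsymbol{1}\rangle=2|E(G)|$ is only valid for the unweighted adjacency matrix, so it cannot be used to rule out $x^*=0$ for general weighted $A$; fortunately it does not need to be.)
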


The proof of Proposition \ref{lemma:linear alg optimization} is also fairly long and involves several intermediary results, namely Propositions \ref{prop:coefficients eq}, \ref{prop:2 cases for optimum}, \ref{prop:max over central strip and critical}, \ref{prop:lin alg opt lower bound} and \ref{prop:existence of optimizer vector} and Theorem \ref{lemma:app:main lemma for reciprocal functs} (the division of the proof into so many intermediary propositions has been made for the sake of clarity), which together finally proves the Proposition on page \pageref{eq:lin alg lemma proof}. But the spirit of the proof is nothing more than using Lagrange multipliers to optimize for $|\boldsymbol{v}|^2$ over all $\boldsymbol{v} \in \mathbb{R}^n$, subject to the constraints $\langle \boldsymbol{v}, A \boldsymbol{v} \rangle =0$ and $\boldsymbol{v} \in \mathcal{S}_n$ i.e. $|\boldsymbol{v}|^2=\langle \boldsymbol{1}, \boldsymbol{v} \rangle$. 

The function $|\boldsymbol{v}|^2$ is by the triangle inequality continuous in $\boldsymbol{v} \in \mathbb{R}^n$. The function $\boldsymbol{v} \mapsto \langle \boldsymbol{v}, A \boldsymbol{v} \rangle$ is similarly continuous, hence the set $\left\{ \boldsymbol{v} \in \mathbb{R}^n \ \mid \ \langle \boldsymbol{v}, A \boldsymbol{v} \rangle=0 \right\}$ is closed, since it is the preimage of $\{0\}$ under a continuous function. Since the sphere $\mathcal{S}_n$ is compact, the set $\mathcal{S}_n \cap \left\{ \boldsymbol{v} \in \mathbb{R}^n \ \mid \ \langle \boldsymbol{v}, A \boldsymbol{v} \rangle=0 \right\}$ is the intersection between a compact and a closed set, and is therefore itself compact. Therefore, by compactness and continuity, there exists a (not necessarily unique) vector $\boldsymbol{v} \in \mathcal{S}_n \cap \left\{ \boldsymbol{v} \in \mathbb{R}^n \ \mid \ \langle \boldsymbol{v}, A \boldsymbol{v} \rangle=0 \right\}$ which maximizes $|\boldsymbol{v}|^2$. We shall call this vector $\boldsymbol{v}_{op}$. We thus have
\begin{align*} 
\sup_{\boldsymbol{v} \in \mathcal{S}_n} \left\{ |\boldsymbol{v}|^2 \ \middle| \ \langle \boldsymbol{v} , A \boldsymbol{v} \rangle =0  \right\} = |\boldsymbol{v}_{op}|^2 \qquad \text{for some} \ \boldsymbol{v}_{op} \in \left\{ \boldsymbol{v} \in \mathcal{S}_n \ \mid \ \langle \boldsymbol{v}, A \boldsymbol{v} \rangle=0 \right\}
\end{align*}
We now set up the optimization problem. To make calculations easier, we shall work in the real eigen-basis of $A$ so that an arbitrary vector $\boldsymbol{v} \in \mathbb{R}^n$ can now be written as $\boldsymbol{v}=\sum_{i=1}^n \alpha_i \boldsymbol{u}_i$ for some real numbers $\alpha_1, \cdots , \alpha_n$. In this basis, the optimization problem becomes
\begin{align*} 
\text{maximize}: \ \ \sum_{i=1}^n \alpha_i^2 \ \quad \text{subject to}: \ \: \sum_{i=1}^n \lambda_i \alpha_i^2=0 \: \ \text{and} \ \: \sum_{i=1}^n \alpha_i^2 = \sum_{i=1}^n \alpha_i \langle \boldsymbol{1} , \boldsymbol{u}_i \rangle
\end{align*}
and we can write an optimal solution $\boldsymbol{v}_{op}$ (which must exist as argued above) as $\boldsymbol{v}_{op} = \sum_{i=1}^n \alpha_{op,i} \boldsymbol{u}_i$ with real eigen-coefficients $\alpha_{op,i}$. We thus have a maximization problem with $n$ real variables $\{ \alpha_i \}_{i=1}^n$ and $2$ real constraint functions. The following result provides the first step towards solving these optimization problems, i.e. towards proving Proposition \ref{lemma:linear alg optimization}. 

\begin{prop} \label{prop:coefficients eq}
Given the optimization problem presented and set up above, the optimal solution $\boldsymbol{v}_{op}$ must either satisfy $\boldsymbol{v}_{op} \in \kernel(A)$ or satisfy the following equation for some $x \in \mathbb{R}$
\begin{align*}
(1-x \lambda_i) \alpha_{op,i}= \langle \boldsymbol{1}, \boldsymbol{u}_i \rangle
\end{align*} 
\end{prop}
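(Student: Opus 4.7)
The plan is to apply the method of Lagrange multipliers to the constrained optimization problem set up above. Setting $f(\boldsymbol{\alpha}) = \sum_i \alpha_i^2$, $g_1(\boldsymbol{\alpha}) = \sum_i \lambda_i \alpha_i^2$ and $g_2(\boldsymbol{\alpha}) = \sum_i \alpha_i^2 - \sum_i \alpha_i \langle \boldsymbol{1}, \boldsymbol{u}_i \rangle$, the fact that $\boldsymbol{v}_{op}$ is a maximizer of $f$ subject to $g_1 = g_2 = 0$ supplies, via the Fritz~John form of the first-order conditions for equality-constrained extrema, a non-trivial triple $(\mu_0, \mu_1, \mu_2) \in \mathbb{R}^3$ with
\[
\mu_0\,\nabla f(\boldsymbol{\alpha}_{op}) \;=\; \mu_1\,\nabla g_1(\boldsymbol{\alpha}_{op}) + \mu_2\,\nabla g_2(\boldsymbol{\alpha}_{op}).
\]
Writing out the $i$-th component in the eigen-basis reduces this to $2\mu_0\,\alpha_{op,i} = 2\mu_1 \lambda_i \alpha_{op,i} + \mu_2\bigl(2\alpha_{op,i} - \langle \boldsymbol{1}, \boldsymbol{u}_i \rangle\bigr)$ for every $i = 1, \dots, n$.

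The key step will be to pin down $\mu_2$ in terms of $\mu_0$ by contracting the displayed relation with $\boldsymbol{\alpha}_{op}$ itself. Multiplying by $\alpha_{op,i}$, summing over $i$, and substituting the two constraints $\sum_i \lambda_i \alpha_{op,i}^2 = 0$ and $\sum_i \alpha_{op,i} \langle \boldsymbol{1}, \boldsymbol{u}_i \rangle = \sum_i \alpha_{op,i}^2$ makes the $\mu_1$-term drop out, and the remaining identity reads $(2\mu_0 - \mu_2)\sum_i \alpha_{op,i}^2 = 0$. Excluding the trivial case $\boldsymbol{v}_{op} = \boldsymbol{0}$, which already lies in $\kernel(A)$, this forces $\mu_2 = 2\mu_0$. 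Substituting back, the component-wise equation collapses to $(\mu_0 + \mu_1 \lambda_i)\,\alpha_{op,i} = \mu_0\,\langle \boldsymbol{1}, \boldsymbol{u}_i \rangle$ for every $i$.

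The argument then finishes with a short dichotomy on whether $\mu_0$ vanishes. If $\mu_0 \neq 0$, I divide through by $\mu_0$ to obtain $(1 - x \lambda_i)\,\alpha_{op,i} = \langle \boldsymbol{1}, \boldsymbol{u}_i \rangle$ with the real scalar $x := -\mu_1/\mu_0$, which is exactly the second alternative of the proposition. If instead $\mu_0 = 0$, then the relation $\mu_2 = 2\mu_0$ gives $\mu_2 = 0$, so non-triviality of $(\mu_0, \mu_1, \mu_2)$ forces $\mu_1 \neq 0$; the original gradient equation then reduces to $\lambda_i \alpha_{op,i} = 0$ for every $i$, i.e.\ $A\boldsymbol{v}_{op} = 0$, placing $\boldsymbol{v}_{op}$ in $\kernel(A)$.

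The main subtlety I anticipate is the choice to invoke Fritz~John rather than the ordinary Lagrange theorem: the gradients $\nabla g_1(\boldsymbol{\alpha}_{op})$ and $\nabla g_2(\boldsymbol{\alpha}_{op})$ are not a priori guaranteed to be linearly independent at the optimum (for instance, $\nabla g_1$ degenerates precisely on $\kernel(A)$), so the standard constraint qualification can fail. The Fritz~John framework both bypasses this technical obstacle and, pleasingly, makes the ``$\kernel(A)$ versus reciprocal-formula'' dichotomy of the proposition emerge organically from whether the leading multiplier $\mu_0$ vanishes, rather than requiring a separate hand-crafted case analysis.
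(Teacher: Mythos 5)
Your proof is correct, and the core calculation (contracting the first-order condition with $\boldsymbol{\alpha}_{op}$ and using the two constraints to pin down one multiplier) is exactly what the paper does. The structural difference is in how the first-order condition is obtained and how the kernel case falls out. The paper explicitly splits into cases according to whether the constraint gradients $\nabla h_1$ and $\nabla h_2$ are linearly dependent at $\boldsymbol{v}_{op}$: in the dependent case it argues by hand that the dependency coefficient of $\nabla h_2$ must vanish (giving $\lambda_i\alpha_{op,i}=0$ for all $i$), while in the independent case it invokes the Regular Value Theorem to exhibit the feasible set as a smooth $(n-2)$-manifold near $\boldsymbol{v}_{op}$, which is where the hypothesis $n\geq 3$ enters, and then applies ordinary Lagrange multipliers. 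You instead appeal to the Fritz John necessary conditions, which assert (without any constraint qualification or dimensional requirement) that the triple $\{\nabla f,\nabla g_1,\nabla g_2\}$ is linearly dependent at the optimum, with non-trivial multipliers $(\mu_0,\mu_1,\mu_2)$. Your contraction yields $\mu_2=2\mu_0$, and then the dichotomy on whether $\mu_0$ vanishes reproduces the paper's two cases automatically: $\mu_0=0$ forces $\mu_2=0$ and hence $\mu_1\neq0$ and $A\boldsymbol{v}_{op}=0$, while $\mu_0\neq0$ normalizes to the reciprocal formula with $x=-\mu_1/\mu_0$. What Fritz John buys you is a single uniform argument that makes the ``$\kernel(A)$ versus reciprocal formula'' alternative fall out of the multiplier structure rather than requiring a hand-crafted case analysis, and it sidesteps the explicit manifold construction (and with it the visible $n\geq 3$ restriction, which the paper then has to patch up separately in the proof of Lemma \ref{lemma:2nd equality}). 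The trade-off is that you are invoking a black box whose proof, in the equality-constrained setting, is itself precisely the paper's case split; so the approaches are equivalent at bottom, but yours is a cleaner packaging.
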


\begin{proof}[Proof of Proposition \ref{prop:coefficients eq}]
We must maximize the function $f(\{ \alpha_i \}_{i=1}^n)=\sum_{i=1}^n \alpha_i^2$ subject to the two constraints $h_1(\{ \alpha_i \}_{i=1}^n)=\sum_{i=1}^n \lambda_i \alpha_i^2$ and $h_2(\{ \alpha_i \}_{i=1}^n)=\sum_{i=1}^n (\alpha_i^2-\alpha_i \langle \boldsymbol{1}, \boldsymbol{u}_i \rangle )$. Since both $h_1$ and $h_2$ are smooth functions from $\mathbb{R}^n$ to $\mathbb{R}$, and since $n \geq 3$, it follows from the Regular value theorem/Preimage theorem \cite[section 9.3]{tu2010Introduction} that the set $\{ p \in \mathbb{R}^n \ \mid \ h_1(p)=0, \ h_2(p)=0 \}$ is at least locally a smooth $n-2$ dimensional manifold near a point $p$ where the matrix $(dh_p)_{ij}= \left. \frac{\partial h_i}{\partial \alpha_j} \right|_p$ is surjective, which is equivalent to the gradients $\nabla_{\boldsymbol{\alpha}} h_1$ and $\nabla_{\boldsymbol{\alpha}} h_2$ being linearly independent. The components of these gradient vectors are straightforward to calculate 
\begin{align*}
(\nabla_{\boldsymbol{\alpha}} h_1)_i = \frac{\partial h_1}{\partial \alpha_i} = 2 \lambda_i \alpha_i \quad \text{and} \quad (\nabla_{\boldsymbol{\alpha}} h_2)_i = \frac{\partial h_2}{\partial \alpha_i} = 2 \alpha_i - \langle \boldsymbol{1} , \boldsymbol{u}_i \rangle
\end{align*}
We shall now split the proof up into cases depending on whether these gradients are linearly independent or not at the point $\boldsymbol{v}_{op}$.

We first consider the case where the gradients are linearly dependent, i.e. there exists $a,b \in \mathbb{R}$, which are not both non-zero, s.t.
\begin{align} \label{eq:lin dep case reas vect}
0=a (\nabla_{\boldsymbol{\alpha}} h_1)_i(\boldsymbol{v}_{op}) + b (\nabla_{\boldsymbol{\alpha}} h_2)_i (\boldsymbol{v}_{op})= 2 a \lambda_i \alpha_{op,i} + 2 b \alpha_{op,i} - b \langle \boldsymbol{1} , \boldsymbol{u}_i \rangle \quad \text{for all $i$}
\end{align}
Using the fact that we must have $h_1 (\boldsymbol{v}_{op})=h_2 (\boldsymbol{v}_{op})=0$, i.e. $\langle \boldsymbol{v}_{op}, A \boldsymbol{v}_{op} \rangle =0$ and $|\boldsymbol{v}_{op}|^2= \langle \boldsymbol{1}, \boldsymbol{v}_{op} \rangle$, we multiply both sides of (\ref{eq:lin dep case reas vect}) by $\alpha_{op,i}$ and sum over all $1 \leq i \leq n$ to get
\begin{align*}
0 = 2a \sum_{i=1}^n \lambda_i \alpha_{op,i}^2 + 2b \sum_{i=1}^n \alpha_{op,i}^2-b \sum_{i=1}^n \langle \boldsymbol{1} , \boldsymbol{u}_i \rangle \alpha_{op,i} \\
= 2a \langle \boldsymbol{v}_{op}, A \boldsymbol{v}_{op} \rangle + 2b |\boldsymbol{v}_{op}|^2- b \langle \boldsymbol{1}, \boldsymbol{v}_{op} \rangle = b |\boldsymbol{v}_{op}|^2
\end{align*}
Thus, we must have $b=0$, since we cannot have $\boldsymbol{v}_{op}=0$, due to the assumption of $\boldsymbol{v}_{op}$ being maximizing, and a vector with a $1$ at any given vertex of $G$ and zeros otherwise would satisfy the same constraints as $\boldsymbol{v}_{op}$ but have greater norm. Since $b=0$, we must have $a \neq 0$ by assumption of not both being $0$. Plugging this into eq. (\ref{eq:lin dep case reas vect}) and dividing by $a$ gives us $\lambda_i \alpha_{op,i}=0$ for all $i$, entailing that we must have $\boldsymbol{v}_{op} \in E_{0}(A) = \kernel(A)$ in this case. 
\\
\\
We now turn to the case where the gradients $(\nabla_{\boldsymbol{\alpha}} h_1)(\boldsymbol{v}_{op})$ and $(\nabla_{\boldsymbol{\alpha}} h_2)(\boldsymbol{v}_{op})$ are linearly independent, which, as mentioned above, by the Regular value theorem/pre-image theorem entails that the set $M:=\{ p \in \mathbb{R}^n \ \mid \ h_1(p)=0, \ h_2(p)=0 \}$ is a smooth $n-2$ dimensional manifold in some neighborhood of $\boldsymbol{v}_{op}$-- we now see why we needed to assume $n \geq 3$. We can now view $f(\boldsymbol{v})=|\boldsymbol{v}|^2$ as a smooth function $f:M \rightarrow \mathbb{R}$. Since $\boldsymbol{v}_{op}$ by assumption maximizes $f$ over $M$, we must have $\nabla f|_{\boldsymbol{v}_{op}}=0$ (calculated in $M$), since otherwise one could find a vector $\boldsymbol{u} \in M$ in the smooth neighborhood near $\boldsymbol{v}_{op}$ with greater magnitude. If we calculate the derivative in $\mathbb{R}^n$, i.e. w.r.t. $\{ \alpha_i \}_{i=1}^n$, we must thus have $\nabla_{\boldsymbol{\alpha}} f |_{\boldsymbol{v}_{op}} \in \spn(\{ (\nabla_{\boldsymbol{\alpha}} h_1)(\boldsymbol{v}_{op}), (\nabla_{\boldsymbol{\alpha}} h_2)(\boldsymbol{v}_{op}) \})$, or, to phrase it in the standard Lagrange multiplier way, see e.g. \cite{fuente_mathematical_2000}, there must exist real numbers $\eta_1, \eta_2 \in \mathbb{R}$ s.t. $\nabla_{\boldsymbol{\alpha}} \left(f+ \eta_1 h_1 + \eta_2 h_2 \right)|_{\boldsymbol{v}_{op}} =0$. In components, this equation reads
\begin{align} \label{eq:lagrane mult eq sol}
2 \alpha_{op, i}+2 \eta_1 \lambda_i \alpha_{op, i} + \eta_2 (2 \alpha_{op, i}-\langle \boldsymbol{1}, \boldsymbol{u}_i \rangle) =0 \qquad \text{for all $i$}
\end{align}
Multiplying the above equation by $\alpha_{op,i}$ and summing over all $1 \leq i \leq n$, while using $|\boldsymbol{v}_{op}|^2 > 0$ as argued above, we get
\begin{align*}
0 = 2 \sum_{i=1}^n \alpha_{op,i}^2 +2 \eta_1 \sum_{i=1}^n \lambda_i \alpha_{op,i}^2+\eta_2 \left(2 \sum_{i=1}^n \alpha_{op,i}^2- \sum_{i=1}^n \langle \boldsymbol{1} , \boldsymbol{u}_i \rangle \alpha_{op,i} \right) \\
= 2 |\boldsymbol{v}_{op}|^2+2 \eta_1 \langle \boldsymbol{v}_{op}, A \boldsymbol{v}_{op} \rangle + \eta_2 (2 |\boldsymbol{v}_{op}|^2-\langle \boldsymbol{1}, \boldsymbol{v}_{op} \rangle)=(2+\eta_2) |\boldsymbol{v}_{op}|^2 \\
\Rightarrow \quad \eta_2=-2
\end{align*}
where we have again used that $\boldsymbol{v}_{op}$ must be non-zero and satisfy $\langle \boldsymbol{v}_{op}, A \boldsymbol{v}_{op} \rangle=0$ and $|\boldsymbol{v}_{op}|^2=\langle \boldsymbol{1}, \boldsymbol{v}_{op} \rangle$. Plugging $\eta_2=-2$ back into eq. (\ref{eq:lagrane mult eq sol}) and dividing by $2$ gives
\begin{align*}
(1-\eta_1 \lambda_i) \alpha_{op,i} = \langle \boldsymbol{1}, \boldsymbol{u}_i \rangle
\end{align*}
which proves the proposition with $x=\eta_1$. 
\end{proof}

From Proposition \ref{prop:coefficients eq} follows the next crucial result
\begin{prop} \label{prop:2 cases for optimum}
Suppose that the optimizer $\boldsymbol{v}_{op}$ satisfies the equation $(1-x \lambda_i) \alpha_{op,i}=\langle \boldsymbol{1}, \boldsymbol{u}_i \rangle$ for all $i$ as prescribed by the latter case from Proposition \ref{prop:coefficients eq}. Then, one of the following must be the case:
\begin{enumerate}
    \item $|\boldsymbol{v}_{op}|^2=W_A (x)$, where $x$ is a number that solves $W_A'(x)=0$
    \item $|\boldsymbol{v}_{op}|^2=W_A(\lambda^{-1})$, where $\lambda \in \sigma(A)$ is a non-zero eigenvalue of $A$ satisfying $\langle \boldsymbol{1}, P_\lambda \boldsymbol{1} \rangle =0$ and $- \lambda \ W_A '(\lambda^{-1}) \geq 0$.
\end{enumerate}
\end{prop}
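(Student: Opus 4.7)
The plan is to work directly from the Lagrange equation $(1-x\lambda_i)\alpha_{op,i} = \langle \boldsymbol{1}, \boldsymbol{u}_i \rangle$ provided by Proposition \ref{prop:coefficients eq}, and split into two cases according to whether $x^{-1}$ is an eigenvalue of $A$. First I would treat the generic case in which $1-x\lambda_i \neq 0$ for every $i$. Then the equation uniquely solves as $\alpha_{op,i} = \langle \boldsymbol{1}, \boldsymbol{u}_i \rangle/(1-x\lambda_i)$, and substituting into $\langle \boldsymbol{1}, \boldsymbol{v}_{op}\rangle = \sum_i \alpha_{op,i}\langle \boldsymbol{1}, \boldsymbol{u}_i\rangle$ shows this expression is exactly $W_A(x)$ by Definition \ref{def:weighted walk-generating function}. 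The spherical constraint $|\boldsymbol{v}_{op}|^2 = \langle \boldsymbol{1}, \boldsymbol{v}_{op}\rangle$ therefore yields $|\boldsymbol{v}_{op}|^2 = W_A(x)$, while the remaining constraint $\sum_i \lambda_i \alpha_{op,i}^2 = 0$ reads $\sum_i \lambda_i \langle \boldsymbol{1}, \boldsymbol{u}_i\rangle^2/(1-x\lambda_i)^2 = 0$, which is literally $W_A'(x) = 0$ after differentiating the sum form of $W_A$. This gives item 1.

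Next I would handle the exceptional case where $x = \lambda^{-1}$ for some $\lambda \in \sigma(A)$. Here $\lambda \neq 0$ (otherwise $x$ would be undefined), and for every index $i$ with $\lambda_i = \lambda$ the equation $(1-\lambda_i/\lambda)\alpha_{op,i} = \langle \boldsymbol{1}, \boldsymbol{u}_i\rangle$ has vanishing left-hand side, forcing $\langle \boldsymbol{1}, \boldsymbol{u}_i \rangle = 0$, that is $P_\lambda \boldsymbol{1} = 0$ and hence $\langle \boldsymbol{1}, P_\lambda \boldsymbol{1}\rangle = 0$. For $\lambda_i \neq \lambda$ the coefficient is still determined as $\alpha_{op,i} = \langle \boldsymbol{1}, \boldsymbol{u}_i\rangle/(1-\lambda_i/\lambda)$, while for $\lambda_i = \lambda$ the coefficients $\alpha_{op,i}$ remain free. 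By the convention stated immediately after Definition \ref{def:weighted walk-generating function}, the value $W_A(\lambda^{-1})$ is well defined and equals $\sum_{\mu \in \sigma(A)\setminus\{\lambda\}} \langle \boldsymbol{1}, P_\mu \boldsymbol{1}\rangle/(1-\mu/\lambda)$. Computing $\langle \boldsymbol{1}, \boldsymbol{v}_{op}\rangle$ and using $\langle \boldsymbol{1}, \boldsymbol{u}_i\rangle = 0$ when $\lambda_i = \lambda$ makes those terms drop, giving $\langle \boldsymbol{1}, \boldsymbol{v}_{op}\rangle = W_A(\lambda^{-1})$; the spherical constraint then yields $|\boldsymbol{v}_{op}|^2 = W_A(\lambda^{-1})$.

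It remains to extract the sign condition. I would split $\sum_i \lambda_i \alpha_{op,i}^2 = 0$ into the $\lambda_i \neq \lambda$ part, which equals $W_A'(\lambda^{-1})$ by the same differentiation as in case one (and under the same well-defined convention), and the $\lambda_i = \lambda$ part, which equals $\lambda \sum_{\lambda_i = \lambda}\alpha_{op,i}^2$. This produces the identity $W_A'(\lambda^{-1}) + \lambda \sum_{\lambda_i = \lambda}\alpha_{op,i}^2 = 0$. Multiplying by $-\lambda$ gives $-\lambda W_A'(\lambda^{-1}) = \lambda^2 \sum_{\lambda_i = \lambda}\alpha_{op,i}^2 \geq 0$, which is exactly the sign condition in item 2.

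The main obstacle is purely bookkeeping: one has to separate the eigen-contributions corresponding to $\lambda_i = \lambda$ from the rest throughout, invoke the convention from Definition \ref{def:weighted walk-generating function} to legitimise the value $W_A(\lambda^{-1})$ when $\langle \boldsymbol{1}, P_\lambda \boldsymbol{1}\rangle = 0$, and observe that differentiation of $W_A$ inherits exactly the same excised structure, so that the quadratic-form constraint translates cleanly into the stated sign condition on $-\lambda W_A'(\lambda^{-1})$.
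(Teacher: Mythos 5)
Your proof is correct and follows essentially the same approach as the paper's: split into the generic case ($1 - x\lambda_i \neq 0$ for all $i$) and the exceptional case ($x = \lambda^{-1}$ for some $\lambda \in \sigma(A)$), solve the Lagrange equation for $\alpha_{op,i}$, and read off the conclusions from the two remaining constraints. Your presentation in the second case is slightly leaner (you extract the sign condition directly from $\langle \boldsymbol{v}_{op}, A\boldsymbol{v}_{op}\rangle = 0$ rather than deriving $|\boldsymbol{u}|^2 = -\lambda^{-1}W_A'(\lambda^{-1})$ from both constraints and noting consistency), but this is only a matter of exposition, not a different argument.
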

We here define $W_A(\lambda^{-1})$ to equal $\sum_{\mu \in \sigma(A) \backslash \lambda} \frac{\langle \boldsymbol{1}, P_\mu \boldsymbol{1} \rangle }{1-\mu / \lambda}$ whenever $\lambda \in \sigma(A)$ and $\langle \boldsymbol{1}, P_\lambda \boldsymbol{1} \rangle =0$, i.e. we continuously extend $W_A(x)$ to inputs like $1/\lambda$ whenever it is possible as mentioned in Definition \ref{def:weighted walk-generating function}. Also, $W_A'(x)$ always denotes the $x$-derivative of $W_A$, i.e. $W_A'(x)=\frac{d W_A}{dx}(x)$.
\begin{proof}
We assume that $\boldsymbol{v}_{op}$ satisfies $(1-x \lambda_i) \alpha_{op,i}=\langle \boldsymbol{1}, \boldsymbol{u}_i \rangle$ for all $i$. There are now two cases to consider: either, $x = \lambda_j^{-1}$ for some $\lambda_j \in \sigma(A)$, or this is not the case. 

We first consider the case where $x \neq \lambda_i^{-1}$ for all $i$, meaning that we have $(1-\lambda_i x) \neq 0$ for all $i$. Thus, we can for all $i$ divide by this factor and get 
\begin{align} \label{eq:form v_op nen degenerate}
\boldsymbol{v}_{op} & = \sum_{i=1}^n \alpha_{op ,i} \boldsymbol{u}_i = \sum_{i=1}^n \frac{\langle \boldsymbol{1}, \boldsymbol{u}_i \rangle}{1-\lambda_i x} \boldsymbol{u}_i \\ \notag
\Rightarrow \quad |\boldsymbol{v}_{op}|^2 & = \langle \boldsymbol{1}, \boldsymbol{v}_{op} \rangle = \sum_{i=1}^n \frac{|\langle \boldsymbol{1}, \boldsymbol{u}_i \rangle|^2}{1-\lambda_i x} = W_A(x)
\end{align}
We remember that the eigenvectors $\boldsymbol{u}_i$ has been chosen s.t. $\langle \boldsymbol{1}, \boldsymbol{u}_{i} \rangle$ are real, hence $\langle \boldsymbol{1}, \boldsymbol{u}_{i} \rangle^2=|\langle \boldsymbol{1}, \boldsymbol{u}_{i} \rangle|^2$. $\boldsymbol{v}_{op}$ must satisfy $ \langle \boldsymbol{v}_{op}, A \boldsymbol{v}_{op} \rangle =0$, which then entails 
\begin{align} \label{eq:derivative W_A zero}
0 = \sum_{i=1}^n \lambda_i \frac{|\langle \boldsymbol{1}, \boldsymbol{u}_i \rangle|^2}{(1-\lambda_i x)^2} = W_A'(x)
\end{align}
Of course, $\boldsymbol{v}_{op}$ must also satisfy $|\boldsymbol{v}_{op}|^2=\langle \boldsymbol{1}, \boldsymbol{v}_{op} \rangle$, but this constraint is in fact automatically satisfied if $\boldsymbol{v}_{op}$ is of the form given in \eqref{eq:form v_op nen degenerate} and $x$ satisfies $W_A' (x)=0$ as prescribed by \eqref{eq:derivative W_A zero}, since then $|\boldsymbol{v}_{op}|^2-\langle \boldsymbol{1}, \boldsymbol{v}_{op} \rangle = x \langle \boldsymbol{v}_{op}, A \boldsymbol{v}_{op} \rangle = x W_A'(x)=0$. 

Hence, in case $x \neq \lambda^{-1}$ for all $\lambda \in \sigma(A)$, we indeed have $|\boldsymbol{v}_{op}|^2=W_A(x)$ for some $x$ satisfying $W_A'(x)=0$, confirming the proposition in this case. 
\\
\\
We now consider the case where $x=\lambda_x^{-1}$ for some $\lambda_x \in \sigma(A)$. Clearly, $\lambda_x$ must be non-zero. Since $\boldsymbol{v}_{op}$ by assumption satisfies $(1-x \lambda_i) \alpha_{op,i}=(1-\lambda_i / \lambda_x) \alpha_{op,i}=\langle \boldsymbol{1}, \boldsymbol{u}_i \rangle$ for all $i$, we now have an expression for all $\alpha_{op,i}$ for which $\boldsymbol{u}_i$ is not in the eigen-space $E_{\lambda_x}(A)$ associated with $\lambda_x$. Also, since $\langle \boldsymbol{1}, \boldsymbol{u}_i \rangle = (1-\lambda_i / \lambda_x) \alpha_{op,i} = 0$ for any $\boldsymbol{u}_i \in E_{\lambda_x}(A)$, i.e. for all $i$ for which $\lambda_i = \lambda_x$, we must have $\langle \boldsymbol{1}, \boldsymbol{u} \rangle =0$ for any $\boldsymbol{u} \in E_{\lambda_x}(A)$. Thus
\begin{align} \label{eq:form v_op degenerate case}
\boldsymbol{v}_{op} = \sum_{1\leq i \leq n, \ \lambda_i \neq \lambda_x} \frac{\langle \boldsymbol{1}, \boldsymbol{u}_i \rangle}{1-\lambda_i / \lambda_x} \boldsymbol{u}_i + \boldsymbol{u} = \sum_{\mu \in \sigma(A) \backslash \lambda_x} \frac{  P_\mu \boldsymbol{1}  }{1-\mu / \lambda_x}+\boldsymbol{u}
\end{align}
where $\boldsymbol{u}$ is a vector in the eigen-space associated with $\lambda_x$, i.e. $\boldsymbol{u} \in E_{\lambda_x}(A)$, and hence we have $\langle \boldsymbol{1}, \boldsymbol{u} \rangle =0$ as explained above. Thus, 
\begin{align*}
|\boldsymbol{v}_{op}|^2 = \langle \boldsymbol{1}, \boldsymbol{v}_{op} \rangle = \sum_{\mu \in \sigma(A) \backslash \lambda_x} \frac{\langle \boldsymbol{1}, P_\mu \boldsymbol{1} \rangle}{1-\mu \backslash \lambda_x} + \langle \boldsymbol{1}, \boldsymbol{u} \rangle = W_A(\lambda_x^{-1})
\end{align*}
where $W_A(\lambda_x^{-1})$ is well-defined since we have $\langle \boldsymbol{1}, P_{\lambda_x} \boldsymbol{1} \rangle =0$. We get additional conditions on $\boldsymbol{v}_{op}$ due to the constraints $|\boldsymbol{v}_{op}|^2=\langle \boldsymbol{1}, \boldsymbol{v}_{op} \rangle$ and $\langle \boldsymbol{v}_{op}, A \boldsymbol{v}_{op} \rangle=0$, which, when using the form of $\boldsymbol{v}_{op}$ from \eqref{eq:form v_op degenerate case}, the first constraint becomes
\begin{align*}
|\boldsymbol{u}|^2+\sum_{\mu \in \sigma(A) \backslash \lambda_x} \frac{\langle \boldsymbol{1}, P_\mu \boldsymbol{1} \rangle}{(1-\mu / \lambda_x)^2} = \sum_{\mu \in \sigma(A) \backslash \lambda_x} \frac{\langle \boldsymbol{1}, P_\mu \boldsymbol{1} \rangle}{1-\mu / \lambda_x} \\
\Rightarrow |\boldsymbol{u}|^2 = \sum_{\mu \in \sigma(A) \backslash \lambda_x} \left( 1-\frac{\mu}{\lambda_x}-1 \right) \frac{\langle \boldsymbol{1}, P_\mu \boldsymbol{1} \rangle}{(1-\mu / \lambda_x)^2} = -\frac{1}{\lambda_x} W_A'(\lambda_x^{-1}) \qquad \text{and} \\
\sum_{\mu \in \sigma(A) \backslash \lambda_x} \mu \frac{\langle \boldsymbol{1}, P_\mu \boldsymbol{1} \rangle}{(1-\mu / \lambda_x)^2}+\lambda_x |\boldsymbol{u}|^2 = 0 = W_A' (\lambda_x^{-1}) + \lambda_x |\boldsymbol{u}|^2 
\end{align*}
We thus see that all constraints can be satisfied by simply picking any $\boldsymbol{u} \in E_{\lambda_x}(A)$ with magnitude $-\frac{1}{\lambda_x} W_A'(\lambda_x^{-1})$, which is possible if and only if $-\frac{1}{\lambda_x} W_A'(\lambda_x^{-1}) \geq 0$, or equivalently, $-\lambda_x W_A'(\lambda_x^{-1}) \geq 0$, since $\lambda_x \neq 0$. 

Thus, in case $x=\lambda^{-1}$ for some $\lambda \in \sigma(A)$, we indeed have that $\lambda$ satisfies $\lambda \neq 0$, $\langle \boldsymbol{1}, P_\lambda \boldsymbol{1} \rangle =0$ and $ \lambda W_A'(\lambda^{-1}) \geq 0$, while $|\boldsymbol{v}_{op}|^2$ is given by $|\boldsymbol{v}_{op}|^2=W_A(\lambda^{-1})$. Thus, Proposition \ref{prop:2 cases for optimum} holds in all cases. 
\end{proof}

Note that since $W_A(x)$ is by definition invariant under which basis we choose, we no longer have to assume that the eigenvectors $\{ \boldsymbol{u}_i \}$ of $A$ are chosen to be real. We continue to the next proposition

\begin{prop}\label{prop:max over central strip and critical}
Let $\boldsymbol{v}_{op}$ be the optimizer from Proposition \ref{prop:2 cases for optimum}. We must then have
\begin{align*}
|\boldsymbol{v}_{op}|^2 \ \leq \ 
\max \left\{ \begin{matrix}
\min \left\{ \ W_A(x) \ \mid \ \lambda_{min}(A)^{-1} \leq x \leq \lambda_{max}(A)^{-1} \right\}, \\
\max\{ \ W_A(c) \quad | \ \ c \in \mathbb{R}, \ \ W_A'(c)=0 \} \\
\end{matrix} \right\}
\end{align*}
where we define $\max\{  W_A(c) \ |  \ c \in \mathbb{R}, \ W_A'(c)=0 \} := \emptyset$ if $W_A$ has no critical points.
\end{prop}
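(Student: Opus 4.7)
The plan is to handle the two alternatives for $\boldsymbol{v}_{op}$ supplied by Proposition \ref{prop:2 cases for optimum}. The first alternative is immediate: if $|\boldsymbol{v}_{op}|^2 = W_A(x_0)$ with $W_A'(x_0) = 0$, then $x_0$ is a critical point, so $|\boldsymbol{v}_{op}|^2 \leq \max\{W_A(c) \mid W_A'(c)=0\}$, which is at most the right-hand side of the proposition. The bulk of the argument concerns the second alternative, where $|\boldsymbol{v}_{op}|^2 = W_A(\lambda^{-1})$ for some non-zero $\lambda \in \sigma(A)$ with $\langle \boldsymbol{1}, P_\lambda \boldsymbol{1}\rangle = 0$ (so $\lambda^{-1}$ is a regular point of $W_A$) and $-\lambda W_A'(\lambda^{-1}) \geq 0$. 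I would split on the geometric position of $\lambda^{-1}$.

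The first sub-case is when $\lambda^{-1}$ lies in a convex region of $W_A$ that contains the central interval $[\lambda_{min}(A)^{-1}, \lambda_{max}(A)^{-1}]$. Convexity of $W_A$ on $[\lambda_{min}(A)^{-1}, \lambda_{max}(A)^{-1}]$ can be read off directly from the formula $W_A''(x) = \sum_\mu \frac{2\langle \boldsymbol{1}, P_\mu \boldsymbol{1}\rangle \mu^2}{(1-\mu x)^3}$, since on this interval each $1-\mu x \geq 0$. Assuming WLOG $\lambda > 0$ (the $\lambda < 0$ case being symmetric), the sign condition reads $W_A'(\lambda^{-1}) \leq 0$. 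Monotonicity of $W_A'$ on the convex region then yields $W_A'(x) \leq 0$ for all $x \in [\lambda_{min}(A)^{-1}, \lambda^{-1}]$, so $W_A$ is non-increasing on this interval. In particular $W_A(\lambda^{-1}) \leq W_A(\lambda_{max}(A)^{-1})$, and since $W_A$ is also non-increasing on the (smaller) central interval, $W_A(\lambda_{max}(A)^{-1})$ equals $\min_{x \in [\lambda_{min}(A)^{-1}, \lambda_{max}(A)^{-1}]} W_A(x)$, which is the first quantity appearing in the max.

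The second sub-case is when $\lambda^{-1}$ falls into a smooth interval $(a,b)$ of $W_A$ lying outside the convex region. WLOG $\lambda > 0$ and $a$ is a positive pole, i.e., $a = 1/\mu$ for some $\mu \in \sigma(A)$ with $\mu > \lambda$ and $\langle \boldsymbol{1}, P_\mu \boldsymbol{1}\rangle > 0$. Near such a pole from the right, the corresponding term $\frac{\langle \boldsymbol{1}, P_\mu \boldsymbol{1}\rangle}{1-\mu x}$ diverges to $-\infty$, so $W_A(x) \to -\infty$ as $x \to a^+$. Combined with the sign condition $W_A'(\lambda^{-1}) \leq 0$, this blow-up forces the supremum of $W_A$ over $(a, \lambda^{-1}]$ to be attained at an interior point $c \in (a, \lambda^{-1})$ (or at $\lambda^{-1}$ itself, in which case necessarily $W_A'(\lambda^{-1}) = 0$ and $\lambda^{-1}$ is already a critical point). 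By Fermat's lemma, $W_A'(c) = 0$, so $W_A(\lambda^{-1}) \leq W_A(c) \leq \max\{W_A(c') \mid W_A'(c') = 0\}$, the second quantity in the max.

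The main obstacle will be the second sub-case: one must carefully distinguish which eigenvalues of $A$ produce actual poles of $W_A$ (only those with $\langle \boldsymbol{1}, P_\mu \boldsymbol{1}\rangle \neq 0$) and correctly pin down the endpoints of the smooth interval containing $\lambda^{-1}$ together with the signs of the limits of $W_A$ at these endpoints. Once the endpoint behaviour is established, existence of the dominating critical point is a routine consequence of continuity, the extreme value theorem, and Fermat's lemma. A few edge configurations (for example, when all eigenvalues with non-zero projection onto $\boldsymbol{1}$ share a sign, so that $W_A$ has no critical points at all and Case 2 forces $\lambda^{-1}$ into a region where $W_A$ is monotone) should be checked separately but are handled by the same template.
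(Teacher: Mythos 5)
Your proposal is correct and takes essentially the same approach as the paper: decompose on whether $\lambda^{-1}$ falls inside the central strip $(\lambda_-^{-1},\lambda_+^{-1})$ or outside it, use convexity of $W_A$ in the strip to push the value down to the minimum over $[\lambda_{min}(A)^{-1},\lambda_{max}(A)^{-1}]$, and use divergence near the adjacent pole outside the strip to produce a dominating critical point. The only cosmetic difference is that you argue from $W_A(x)\to-\infty$ at the pole together with Fermat's lemma, whereas the paper argues directly from $W_A'(x)\to\pm\infty$ and tracks the first sign change of $W_A'$; these are interchangeable.
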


\begin{proof}
It follows straightforwardly from Proposition \ref{prop:2 cases for optimum} that $|\boldsymbol{v}_{op}|^2$ is upper bounded by the maximum of $W_A(x)$, where $x$ ranges over all critical points of $W_A$, i.e. $x \in \mathbb{R}$ for which $W_A' (x)=0$, and over the reciprocals of some special eigenvalues of $A$. We specifically have
\begin{align*}
|\boldsymbol{v}_{op}|^2 \ \leq \ 
\max \left\{ \begin{matrix}
W_A(\lambda^{-1}) \quad | \ \ \lambda \in \sigma(A), \ \lambda \neq 0, \ \langle \boldsymbol{1}, P_\lambda \boldsymbol{1} \rangle =0, \ -\lambda W_A' (\lambda^{-1}) \geq 0 \\
W_A(c) \quad | \ \ c \in \mathbb{R}, \ \ W_A'(c)=0 \\
\end{matrix} \right\}
\end{align*}
Let us now define what we shall call the \emph{central strip} of the function $W_A(x)$, as the interval $(\lambda_-^{-1},\lambda_+^{-1}) \subset \mathbb{R}$, where $\lambda_-$ ($\lambda_+$) is the smallest (largest) eigenvalue $\lambda \in \sigma(A)$ of $A$ for which $\langle \boldsymbol{1}, P_{\lambda} \boldsymbol{1} \rangle \neq 0$. If $A$ has no strictly negative (positive) eigenvalues $\lambda$ for which $\langle \boldsymbol{1}, P_{\lambda} \boldsymbol{1} \rangle \neq 0$, we set $\lambda_-:=-\infty$ ($\lambda_+:=+\infty$). For example, if both $\langle \boldsymbol{1}, P_{\lambda_{min}} \boldsymbol{1} \rangle \neq 0$ and $\langle \boldsymbol{1}, P_{\lambda_{max}} \boldsymbol{1} \rangle \neq 0$, the central strip of $W_A(x)$ is $(\lambda_{min}(A)^{-1},\lambda_{max}(A)^{-1})$
\\
\\
Note that the minimum $\min \left\{ \ W_A(x) \ \mid \ \lambda_{min}(A)^{-1} \leq x \leq \lambda_{max}(A)^{-1} \right\}$ does indeed exists, since $W_A(x)$ is convex, lower bounded and continuous in its central strip (except possibly at $x=\lambda_{min}(A)^{-1}$ or $x=\lambda_{max}(A)^{-1}$ in which case $W_A(x)$ approaches $+\infty$ at these points from within the interval), and the central strip contains the interval $\left[\lambda_{min}(A)^{-1}, \lambda_{max}(A)^{-1} \right]$. We shall now prove Proposition \ref{prop:max over central strip and critical} by showing that if $\lambda \in \sigma(A)$ is an eigenvalue of $A$ satisfying $\lambda \neq 0$, $\langle \boldsymbol{1}, P_\lambda \boldsymbol{1} \rangle =0$ and $-\lambda W_A'(\lambda^{-1}) \geq 0$, then either $W_A(\lambda^{-1}) \leq \min \left\{ \ W_A(x) \ \mid \ \lambda_{min}(A)^{-1} \leq x \leq \lambda_{max}(A)^{-1} \right\}$ or $W_A(\lambda^{-1}) \leq \max\{ \ W_A(c) \quad | \ \ c \in \mathbb{R}, \ \ W_A'(c)=0 \}$. We shall split the proof up into three cases, depending on whether $\lambda^{-1}$ is contained in the central strip $(\lambda_-^{-1},\lambda_+^{-1})$ of $W_A(x)$, or lies below or above it, that is, whether $\lambda^{-1} \leq \lambda_-^{-1}$, $\lambda^{-1} \in [\lambda_-^{-1}, \lambda_+^{-1}]$ or $\lambda^{-1} \geq \lambda_+^{-1}$. 
\\
\\
\emph{case:} \ $\lambda^{-1} < \lambda_-^{-1}$

For $\lambda^{-1} < \lambda_-^{-1}$ and $\lambda \neq 0$ to be the case, we cannot have $\lambda_-^{-1}=-\infty$, thus we must have $\lambda_- \in \sigma(A)$, $\lambda_- < 0$ and $\langle \boldsymbol{1}, P_{\lambda_-} \boldsymbol{1} \rangle \neq 0$. We also have $\lambda_- < \lambda < 0$ since $\lambda \neq 0$. We now let $\mu_-$ denote the largest eigenvalue $\mu_- \in \sigma(A)$ smaller than $\lambda$, for which $\langle \boldsymbol{1}, P_{\mu_-} \boldsymbol{1} \rangle \neq 0$. We know that such a $\mu_-$ exist, since either $\lambda_-$ satisfies all these conditions, or some larger eigenvalue does. Since $A$ by assumption has no eigenvalues $\eta \in \sigma(A)$ in the interval $\mu_- < \eta < \lambda$ satisfying $\langle \boldsymbol{1}, P_{\eta} \boldsymbol{1} \rangle \neq 0$, $W_A(x)$ is smooth in the interval $x \in (\lambda^{-1},\mu_-^{-1})$, since it is a finite sum of functions that are all smooth in this interval. Meanwhile, $W_A(x)$ contains by assumption the term $\frac{\langle \boldsymbol{1}, P_{\mu_-} \boldsymbol{1} \rangle}{1-x \mu_-}$ where  $\langle \boldsymbol{1}, P_{\mu_-} \boldsymbol{1} \rangle > 0$ and $\mu_- < 0$, which means that we have both $W_A(x) \rightarrow - \infty$ and $W_A' (x) \rightarrow - \infty$ as $x$ approaches $\mu_{-}^{-1}$ from below. Now, by assumption on $\lambda$, we have $-\lambda W_A' (\lambda^{-1}) > 0 \ \Rightarrow W_A' (\lambda^{-1}) \geq 0 $, since $\lambda < 0$. This means that either $\lambda^{-1}$ is a critical point, i.e. $W_A ' (\lambda^{-1}) =0$, or it follows by $W_A(x)$ being smooth and $W_A' (x) \rightarrow - \infty$ as $x$ approaches $\mu_{-}^{-1}$ from below that there exists a critical point $c$, $W_A' (c)=0$ in the interval $c \in (\lambda^{-1},\mu_-^{-1})$ such that $W_A(c) \geq W_A(\lambda^{-1})$, since $c$ can be chosen (as the first critical point larger than $\lambda^{-1}$) such that $W_A' (x) \geq 0$ for all $\lambda^{-1} \leq x \leq c$. Hence, we have $W_A(\lambda^{-1}) \leq \max\{ \ W_A(c) \quad | \ \ c \in \mathbb{R}, \ \ W_A'(c)=0 \}$, confirming the proposition in this case. 
\\
\\
\emph{case:} \ $\lambda^{-1} > \lambda_+^{-1}$

This case is very similar to the previous case discussed above. Similarly to before, we must now have $0 < \lambda < \lambda_+$ and $\lambda_+ \in \sigma(A)$. We let $\mu_+$ denote the smallest eigenvalue $\mu_+ \in \sigma(A)$ larger than $\lambda$, for which $\langle \boldsymbol{1}, P_{\mu_+} \boldsymbol{1} \rangle \neq 0$. Such a $\mu_+$ exist, since $\lambda_+$ satisfies all these conditions, or some smaller eigenvalue does. Similarly to before, we get that $W_A(x)$ must be smooth in the interval $x \in (\mu_+^{-1}, \lambda^{-1})$. We here get $W_A(x) \rightarrow - \infty$ and $W_A' (x) \rightarrow + \infty$ as $x$ approaches $\mu_+^{-1}$ from above, due to $W_A(x)$ containing the term $\frac{\langle \boldsymbol{1}, P_{\mu_+} \boldsymbol{1} \rangle}{1-x \mu_+}$, with $\mu_+ > 0$, $\langle \boldsymbol{1}, P_{\mu_+} \boldsymbol{1} \rangle > 0$. By assumption on $\lambda$, we must again have $- \lambda W_A' (\lambda^{-1}) \geq 0 \Rightarrow W_A' (\lambda^{-1}) \leq 0$. Thus, by smoothness of $W_A(x)$ in the interval $(\mu_+,\lambda^{-1}]$, and using $W_A' (x) \rightarrow + \infty$ as $x$ approaches $\mu_+^{-1}$ from above, there must exists a critical point $c$, $W_A' (c)=0$ in the interval $c \in (\mu_+^{-1}, \lambda^{-1}]$ for which $W_A(c) \geq W_A(\lambda^{-1})$, which proofs the proposition in this case as well.
\\
\\
\emph{case:} \ $\lambda^{-1} \in [\lambda_-^{-1}, \lambda_+^{-1}]$

By assumption of $\lambda, \lambda_-, \lambda_+$, we cannot have $\lambda=\lambda_-$ or $\lambda=\lambda_+$, since we have $\langle \boldsymbol{1}, P_\lambda \boldsymbol{1} \rangle =0$, $\lambda \neq 0$, while $\lambda_-, \lambda_+$ satisfies $\langle \boldsymbol{1}, P_{\lambda_-} \boldsymbol{1} \rangle \neq 0$, $\langle \boldsymbol{1}, P_{\lambda_+} \boldsymbol{1} \rangle \neq 0$ or one of or both $\lambda_-$, $\lambda_+$ equals $\pm \infty$. In any case, we can assume $\lambda^{-1} \in (\lambda_-^{-1},\lambda_+^{-1})$, i.e. $\lambda_-^{-1} < \lambda^{-1} < \lambda_+^{-1}$.

We shall now consider two cases, depending on whether $\lambda$ is positive or negative. Since $\lambda$ by assumption satisfies $-\lambda W_A' (\lambda^{-1}) \geq 0$, if we assume $\lambda$ to be negative (positive), then we must have $W_A' (\lambda^{-1}) \geq 0$ ($W_A' (\lambda^{-1}) \leq 0$). By definition of $\lambda_{min}$ ($\lambda_{max}$) as the smallest (largest) eigenvalue, if $\lambda$ is a negative (positive) eigenvalue and we have $\lambda^{-1} \in (\lambda_-^{-1},\lambda_+^{-1})$, then we must also have $\lambda_{min}(A)^{-1} \in (\lambda_-^{-1},\lambda_+^{-1})$ ($\lambda_{max}(A)^{-1} \in (\lambda_-^{-1},\lambda_+^{-1})$), since $\lambda^{-1} \leq \lambda_{min}(A)^{-1}$ ($\lambda^{-1} \geq \lambda_{max}(A)^{-1}$), where we have used $\lambda_{min} < 0$ ($\lambda_{max} > 0$) and the definition of the minimal (maximal) eigenvalue. The case $\lambda_{min}=0$ ($\lambda_{max}=0$) is ruled out since by assumption we have $A \neq 0$ and $\trace A =0$. It is easy to show that in the central strip $x \in (\lambda_{-}^{-1}, \lambda_{+}^{-1})$, $W_A''(x) \geq 0$, i.e. $W_A(x)$ is a convex function, which means that since $W_A' (\lambda^{-1}) \geq 0$ ($W_A' (\lambda^{-1}) \leq 0$) and since $\lambda^{-1} \leq \lambda_{min}(A)^{-1}$ ($\lambda^{-1} \geq \lambda_{max}(A)^{-1}$), we must have $W_A(\lambda^{-1}) \leq W_A(\lambda_{min}(A)^{-1})$ ($W_A(\lambda^{-1}) \leq W_A(\lambda_{max}(A)^{-1})$). By convexity of $W_A(x)$ in the central strip, the fact that we have $W_A'(\lambda^{-1}) \geq 0$ ($W_A'(\lambda^{-1}) \leq 0$) also entails $W_A'(\lambda_{min}(A)^{-1}) \geq 0$ ($W_A'(\lambda_{max}(A)^{-1}) \leq 0$). This, by yet again using convexity of $W_A(x)$ in the central strip, entails $W_A(\lambda_{min}(A)^{-1}) = \min \{ W_A(x) \ \mid \ \lambda_{min}(A)^{-1} \leq x \leq \lambda_{max}(A)^{-1} \}$ ($W_A(\lambda_{max}(A)^{-1}) = \min \{ W_A(x) \ \mid \ \lambda_{min}(A)^{-1} \leq x \leq \lambda_{max}(A)^{-1} \}$ ), since in both cases (i.e. regardless of $\lambda$ being positive or negative) we have $[\lambda_{min}(A)^{-1}, \lambda_{max}(A)^{-1}] \subseteq [\lambda_{-}^{-1},\lambda_{+}^{-1}]$.

Hence, by using $W_A(\lambda^{-1}) \leq W_A(\lambda_{min}(A)^{-1})$ \quad ($W_A(\lambda^{-1}) \leq W_A(\lambda_{max}(A)^{-1})$) together with $$W_A(\lambda_{min}(A)^{-1}) = \min \{ W_A(x) \ \mid \ \lambda_{min}(A)^{-1} \leq x \leq \lambda_{max}(A)^{-1} \}$$ ($W_A(\lambda_{max}(A)^{-1}) = \min \{ W_A(x) \ \mid \ \lambda_{min}(A)^{-1} \leq x \leq \lambda_{max}(A)^{-1} \}$), we get that regardless of $\lambda$ is negative or positive, it holds that $W_A(\lambda^{-1}) \leq \min \{ W_A(x) \ \mid \ \lambda_{min}(A)^{-1} \leq x \leq \lambda_{max}(A)^{-1} \}$, which confirms the proposition in case $\lambda^{-1} \in [\lambda_-^{-1}, \lambda_+^{-1}]$
\end{proof}

We are now ready to prove inequality in one direction in (\ref{eq:linear alg optimization}) from Proposition \ref{lemma:linear alg optimization}.

\begin{prop} \label{prop:lin alg opt lower bound}
For any graph $G$ of order $n \geq 3$ with non-zero real symmetric weighted adjacency matrix $A$, we have 
\begin{align*} 
\sup_{\boldsymbol{v} \in \mathcal{S}_n} \left\{ |\boldsymbol{v}|^2 \ \middle| \ \langle \boldsymbol{v} , A \boldsymbol{v} \rangle =0  \right\} \leq \min \left\{ W_A(x) \ \middle| \ \lambda_{min}(A)^{-1} \leq x \leq \lambda_{max}(A)^{-1}(A) \right\}
\end{align*}
\end{prop}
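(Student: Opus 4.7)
The plan is to combine Proposition \ref{prop:max over central strip and critical} with the duality result of Theorem \ref{lemma:app:main lemma for reciprocal functs}, together with a separate argument in the kernel case from Proposition \ref{prop:coefficients eq}. I first fix a maximizer $\boldsymbol{v}_{op}$, whose existence was established by the compactness/continuity argument preceding Proposition \ref{prop:coefficients eq}, and apply Proposition \ref{prop:coefficients eq} to split into two cases.

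\textbf{Case A: $\boldsymbol{v}_{op} \in \kernel(A)$.} Since $\boldsymbol{v}_{op} = P_0 \boldsymbol{v}_{op}$ and $\boldsymbol{v}_{op} \in \mathcal{S}_n$, I would write $|\boldsymbol{v}_{op}|^2 = \langle \boldsymbol{1}, \boldsymbol{v}_{op}\rangle = \langle P_0 \boldsymbol{1}, \boldsymbol{v}_{op}\rangle$ and apply Cauchy--Schwarz to obtain $|\boldsymbol{v}_{op}|^2 \leq \langle \boldsymbol{1}, P_0 \boldsymbol{1}\rangle$. (If $0 \notin \sigma(A)$ the right-hand side is $0$ and the claim is trivial.) To bound this by $\min W_A$ on $[\lambda_{min}^{-1}, \lambda_{max}^{-1}]$, note that the $\lambda = 0$ term in the expansion $W_A(x) = \sum_\lambda \frac{\langle \boldsymbol{1}, P_\lambda \boldsymbol{1}\rangle}{1-\lambda x}$ equals exactly $\langle \boldsymbol{1}, P_0 \boldsymbol{1}\rangle$, while every other term is non-negative throughout $[\lambda_{min}^{-1}, \lambda_{max}^{-1}]$ because $1 - \lambda x \geq 0$ for all $\lambda \in \sigma(A)$ and all $x$ in this interval (a short sign check separately for $\lambda \geq 0$ and $\lambda < 0$).

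\textbf{Case B (Lagrange case):} Proposition \ref{prop:max over central strip and critical} already gives
\[
|\boldsymbol{v}_{op}|^2 \ \leq\ \max\!\left\{\ \min_{x \in [\lambda_{min}^{-1},\lambda_{max}^{-1}]} W_A(x), \ \ \max_{c \,:\, W_A'(c)=0} W_A(c)\ \right\}\ ,
\]
so everything reduces to dominating the critical-point term by the interval minimum. This is exactly where Theorem \ref{lemma:app:main lemma for reciprocal functs} applies: $W_A$ is a finite sum $\sum_\lambda \frac{\langle \boldsymbol{1}, P_\lambda \boldsymbol{1}\rangle}{1-\lambda x}$ of the right form, with (after discarding zero-weight summands) strictly positive coefficients. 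The theorem then identifies the largest critical value with the minimum of $W_A$ on the central strip $[\lambda_-^{-1}, \lambda_+^{-1}]$, where $\lambda_-, \lambda_+$ are the extremal eigenvalues with $\langle \boldsymbol{1}, P_\lambda \boldsymbol{1}\rangle \neq 0$. Since $\lambda_{min}(A) \leq \lambda_- < 0 < \lambda_+ \leq \lambda_{max}(A)$, one has $[\lambda_{min}^{-1}, \lambda_{max}^{-1}] \subseteq [\lambda_-^{-1}, \lambda_+^{-1}]$, so the minimum over the smaller subinterval dominates, giving
\[
\max_{c \,:\, W_A'(c)=0} W_A(c) \ =\ \min_{x \in [\lambda_-^{-1}, \lambda_+^{-1}]} W_A(x) \ \leq\ \min_{x \in [\lambda_{min}^{-1}, \lambda_{max}^{-1}]} W_A(x)\ .
\]
Plugging this back collapses the outer $\max$ to the single interval minimum, proving Case B. If $W_A$ has no critical points at all, the second argument of the outer max is vacuous and the conclusion is immediate.

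The main obstacle will be bookkeeping rather than mathematical content: one must carefully distinguish the central strip $(\lambda_-^{-1}, \lambda_+^{-1})$ (on which Theorem \ref{lemma:app:main lemma for reciprocal functs} delivers the duality) from the closed interval $[\lambda_{min}^{-1}, \lambda_{max}^{-1}]$ appearing in the statement, verify that the hypotheses of Theorem \ref{lemma:app:main lemma for reciprocal functs} are met (the presence of both positive and negative $\lambda_j$ uses $\trace A = 0$ together with $A \neq 0$, guaranteeing that if critical points exist the duality applies), and confirm in Case A that each non-zero-eigenvalue term of $W_A$ is indeed non-negative across the entire closed interval, so that the minimum is genuinely bounded below by $\langle \boldsymbol{1}, P_0 \boldsymbol{1}\rangle$.
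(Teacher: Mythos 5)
Your proof is correct and follows essentially the same route as the paper: split via Proposition \ref{prop:coefficients eq} into the kernel case (handled by Cauchy--Schwarz plus the observation that every term $\frac{\langle \boldsymbol{1}, P_\lambda\boldsymbol{1}\rangle}{1-\lambda x}$ is non-negative on $[\lambda_{min}^{-1},\lambda_{max}^{-1}]$) and the Lagrange case (handled by Proposition \ref{prop:max over central strip and critical} together with the duality of Theorem \ref{lemma:app:main lemma for reciprocal functs} and the inclusion $[\lambda_{min}^{-1},\lambda_{max}^{-1}]\subseteq[\lambda_-^{-1},\lambda_+^{-1}]$), with the no-critical-point subcase noted separately. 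The bookkeeping points you flag at the end are exactly the ones the paper also attends to, and your sign check for $1-\lambda x\geq 0$ is accurate.
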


\begin{proof}
Note that by combining Propositions \ref{prop:coefficients eq}, \ref{prop:2 cases for optimum} and \ref{prop:max over central strip and critical}, we either have $\boldsymbol{v}_{op} \in \kernel(A)$, or $ |\boldsymbol{v}_{op}|^2 \ \leq \ 
\max \left\{ \begin{matrix}
\min \left\{ \ W_A(x) \ \mid \ \lambda_{min}(A)^{-1} \leq x \leq \lambda_{max}(A)^{-1} \right\}, \\
\max\{ \ W_A(c) \quad | \ \ c \in \mathbb{R}, \ \ W_A'(c)=0 \} \\
\end{matrix} \right\} $. If $\boldsymbol{v}_{op} \in \kernel(A)$, then $P_0 \boldsymbol{v}_{op}=\boldsymbol{v}_{op}$, where $P_0$ is the projection onto the $0$ eigen-space/kernel of $A$ (in case $\kernel(A)=\{0\}$, we simply have $P_0=0$). This means that since $|\boldsymbol{v}_{op}|^2 =  \langle \boldsymbol{1}, \boldsymbol{v}_{op} \rangle$, we get by using the Cauchy-Schwarz inequality and the fact that $P_0$ is a Hermitian operator and a projection 
\begin{align}
|\boldsymbol{v}_{op}|^4 = | \langle \boldsymbol{1}, \boldsymbol{v}_{op} \rangle |^2 = | \langle \boldsymbol{1}, P_0 \boldsymbol{v}_{op} \rangle |^2 = |\langle \boldsymbol{1} P_0, \boldsymbol{v}_{op} \rangle |^2 \leq |\boldsymbol{v}_{op}|^2 |P_0 \boldsymbol{1}|^2 = |\boldsymbol{v}_{op}|^2 \langle \boldsymbol{1}, P_0 \boldsymbol{1} \rangle \notag \\ \label{eq:size vop kernel case}
\Rightarrow \qquad |\boldsymbol{v}_{op}|^2 \leq \langle \boldsymbol{1}, P_0 \boldsymbol{1} \rangle 
\end{align}
Note now that by definition of $W_A(x)$, we have that for any $x \in [\lambda_{min}(A)^{-1}, \lambda_{max}(A)^{-1}]$, we have 
\begin{align*}
W_A(x) = \sum_{\lambda \in \sigma(A)} \frac{\langle \boldsymbol{1}, P_\lambda \boldsymbol{1} \rangle}{1-\lambda x } = \langle \boldsymbol{1}, P_0 \boldsymbol{1} \rangle + \sum_{\lambda \in \sigma(A), \ \lambda \neq 0} \frac{\langle \boldsymbol{1}, P_\lambda \boldsymbol{1} \rangle}{1-\lambda x } \geq \langle \boldsymbol{1}, P_0 \boldsymbol{1} \rangle
\end{align*}
since $\frac{\langle \boldsymbol{1}, P_\lambda \boldsymbol{1} \rangle}{1-\lambda x }$ is non-negative for any eigenvalue $\lambda$ if $\lambda_{min}(A)^{-1} \leq x \leq \lambda_{max}(A)^{-1}$. We must thus always have $\langle \boldsymbol{1}, P_0 \boldsymbol{1} \rangle \leq \min \left\{ \ W_A(x) \ \mid \ \lambda_{min}(A)^{-1} \leq x \leq \lambda_{max}(A)^{-1} \right\}$. Combining this with eq. (\ref{eq:size vop kernel case}) gives us that in case $\boldsymbol{v}_{op} \in \kernel(A)$
\begin{align*}
|\boldsymbol{v}_{op}|^2 \leq \langle \boldsymbol{1}, P_0 \boldsymbol{1} \rangle \leq \min \left\{ \ W_A(x) \ \mid \ \lambda_{min}(A)^{-1} \leq x \leq \lambda_{max}(A)^{-1} \right\}
\end{align*}
Thus, in any case, it holds that 
\begin{align} \label{eq:proof prop 4 last vupper}
|\boldsymbol{v}_{op}|^2 \ \leq \ 
\max \left\{ \begin{matrix}
\min \left\{ \ W_A(x) \ \mid \ \lambda_{min}(A)^{-1} \leq x \leq \lambda_{max}(A)^{-1} \right\}, \\
\max\{ \ W_A(c) \quad | \ \ c \in \mathbb{R}, \ \ W_A'(c)=0 \} \\
\end{matrix} \right\}
\end{align}
To proceed, we shall use the following interesting analysis result, stating that if $f(x)$ is a real function of the following form 
\begin{align} \label{eq:proof main thm reciprocal form}
f(x) = \sum_{j=1}^N \frac{a_j}{1-b_j x}
\end{align}
where $N$ is a natural number, $a_j, b_j \in \mathbb{R}$ and $a_j \geq 0$ for all $1 \leq j \leq N$, then assuming that $f(x)$ has critical points, the critical point $c$, $f'(c)=0$, at which $f(c)$ takes its largest value is found in the interval $(b_-^{-1}, b_{+}^{-1})$, where $b_-$ ($b_+$) is the smallest (largest) $b_j$ for which $a_j \neq 0$. It is also easy to show that $f(x)$ must be convex in this interval, hence we can write
\begin{align*}
\max \left\{ f(c) \mid c \in \mathbb{R}, \ f'(c)=0 \right\} = \min \left\{ f(x) \mid x \in [b_-^{-1}, b_+^{-1}]  \right\}
\end{align*}
This is the content of Theorem \ref{lemma:app:main lemma for reciprocal functs} proven in Appendix \ref{app:special reciprocal function} (if this result has already been established, then it is beyond the knowledge of the author). See e.g. Figures \ref{fig:Golomb graph walk-generating function} and \ref{fig:P17} for visualizations of this result. Since $W_A(x)$ is indeed of the form (\ref{eq:proof main thm reciprocal form}), due to $\langle \boldsymbol{1} , P_\lambda \boldsymbol{1} \rangle \geq 0$ for any projection $P_\lambda$, we must always have that supposing $W_A$ has critical points, then
\begin{align*}
\max\{ \ W_A(c) \quad | \ \ c \in \mathbb{R}, \ \ W_A'(c)=0 \} = \min \left\{ \ W_A(x) \ \mid \ \lambda_{-}(A)^{-1} \leq x \leq \lambda_{+}(A)^{-1} \right\} \\
\leq \min \left\{ \ W_A(x) \ \mid \ \lambda_{min}(A)^{-1} \leq x \leq \lambda_{max}(A)^{-1} \right\}
\end{align*}
since $\left[ \lambda_{min}(A)^{-1}, \lambda_{max}(A)^{-1} \right] \subseteq \left[ \lambda_-^{-1}, \lambda_+^{-1} \right]$, which when plugged into \eqref{eq:proof prop 4 last vupper} gives 
\begin{align*}
|\boldsymbol{v}_{op}|^2 \leq \min \left\{ \ W_A(x) \ \mid \ \lambda_{min}(A)^{-1} \leq x \leq \lambda_{max}(A)^{-1} \right\}
\end{align*}
We have here assumed that $W_A(x)$ has critical points. If $W_A(x)$ has no critical point, then the proposition follows straightforwardly from \eqref{eq:proof prop 4 last vupper}. This proves the proposition in any case.  
\end{proof}

We now prove inequality in (\ref{eq:linear alg optimization}) in the other direction, which is much easier

\begin{prop} \label{prop:existence of optimizer vector}
Let $y \in \mathbb{R}$ be such that $W_A(y)=\min \left\{ W_A(x) \ \middle| \ \lambda_{min}(A)^{-1} \leq x \leq \lambda_{max}(A)^{-1} \right\}$ where $A$ is any non-zero real symmetric weighted adjacency matrix. Then, there exists a vector $\boldsymbol{u} \in \mathbb{R}^n$ satisfying 
\begin{align*}
|\boldsymbol{u}|^2=W_A(y) \ , \qquad \langle \boldsymbol{u} , A \boldsymbol{u} \rangle = 0 \ , \qquad |\boldsymbol{u}|^2 = \langle \boldsymbol{1} , \boldsymbol{u} \rangle
\end{align*}
\end{prop}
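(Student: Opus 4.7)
The plan is to build $\boldsymbol{u}$ explicitly, in direct analogy with the two shapes for the optimizer identified in Proposition \ref{prop:2 cases for optimum}. Throughout, write
\begin{align*}
\boldsymbol{u}_0 := \sum_{\lambda \in \sigma(A),\, \lambda y \neq 1}\frac{P_\lambda \boldsymbol{1}}{1 - \lambda y},
\end{align*}
which is a real vector since $A$ is real symmetric. First observe that $W_A(y)$ is finite: on $[\lambda_{min}(A)^{-1}, \lambda_{max}(A)^{-1}]$ every term $\langle \boldsymbol{1}, P_\lambda \boldsymbol{1}\rangle/(1-\lambda x)$ is non-negative, and $W_A(0) = n$ bounds the minimum from above. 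Finiteness forces $\langle \boldsymbol{1}, P_\lambda \boldsymbol{1}\rangle = 0$ for every $\lambda \in \sigma(A)$ with $\lambda y = 1$, so $\boldsymbol{u}_0$ is well-defined. Expanding spectral sums yields the three identities
\begin{align*}
\langle \boldsymbol{1}, \boldsymbol{u}_0\rangle = W_A(y),\qquad \langle \boldsymbol{u}_0, A\boldsymbol{u}_0\rangle = W_A'(y),\qquad |\boldsymbol{u}_0|^2 - \langle \boldsymbol{1}, \boldsymbol{u}_0\rangle = y\,W_A'(y).
\end{align*}

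If $W_A'(y) = 0$, I would simply take $\boldsymbol{u} = \boldsymbol{u}_0$, whereupon the three identities collapse exactly into the three required properties. Otherwise $W_A'(y) \neq 0$, and then $y$ must be a boundary point of $[\lambda_{min}(A)^{-1}, \lambda_{max}(A)^{-1}]$: $W_A$ is strictly convex and smooth on the open central strip (which contains the open interval), so any interior minimum would satisfy $W_A'(y) = 0$. By symmetry assume $y = \lambda_{min}(A)^{-1}$. Finiteness of $W_A(y)$ then forces $\langle \boldsymbol{1}, P_{\lambda_{min}(A)}\boldsymbol{1}\rangle = 0$, and the one-sided first-order condition at the left endpoint gives $W_A'(y) \geq 0$, hence $W_A'(y) > 0$.

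Choose any real $\boldsymbol{w} \in E_{min}(A)$ with $|\boldsymbol{w}|^2 = -W_A'(y)/\lambda_{min}(A) > 0$ (valid since $\lambda_{min}(A) < 0$), and set $\boldsymbol{u} = \boldsymbol{u}_0 + \boldsymbol{w}$. Using $\boldsymbol{w} \perp \boldsymbol{u}_0$, $A\boldsymbol{w} = \lambda_{min}(A)\boldsymbol{w}$, and $\langle \boldsymbol{1}, \boldsymbol{w}\rangle = 0$ (from $P_{\lambda_{min}(A)}\boldsymbol{1} = 0$), direct computation yields $\langle \boldsymbol{u}, A\boldsymbol{u}\rangle = W_A'(y) + \lambda_{min}(A)|\boldsymbol{w}|^2 = 0$, $\langle \boldsymbol{1}, \boldsymbol{u}\rangle = W_A(y)$, and $|\boldsymbol{u}|^2 = W_A(y) + yW_A'(y) + |\boldsymbol{w}|^2 = W_A(y)$, the last two summands cancelling because $y = 1/\lambda_{min}(A)$.

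The main obstacle is the boundary case: one must produce a correction vector of a precisely determined length in $E_{min}(A)$, and the sign $W_A'(y) \geq 0$ needed for that length to be non-negative has to be guaranteed automatically. It is, thanks to the one-sided optimality of $y$ at the left endpoint. The whole argument can be read as a constructive converse to Proposition \ref{prop:2 cases for optimum}: that proposition constrained how any optimizer had to look, and here I simply verify that for the specific $y$ minimizing $W_A$ on the restricted interval, a matching $\boldsymbol{u}$ can in fact be written down.
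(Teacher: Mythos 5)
Your proof is correct and takes essentially the same approach as the paper: both construct $\boldsymbol{u}$ from the restricted resolvent applied to $\boldsymbol{1}$, add a correction in $E_{min}(A)$ (or $E_{max}(A)$) of norm $\sqrt{-yW_A'(y)}$ when $y$ falls on the boundary of the interval, and verify the three conditions by spectral computation. The only difference is presentational — you split on $W_A'(y)=0$ versus $W_A'(y)\neq0$ and derive the three identities for $\boldsymbol{u}_0$ up front, whereas the paper splits on boundary versus interior $y$ and carries the two boundary cases in parallel — but the underlying construction and verification are identical.
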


\begin{proof}
We shall consider different cases in the proof, either $y = \lambda_{min}(A)^{-1}$, $y = \lambda_{max}(A)^{-1}$ or $y \neq  \lambda_{min}(A)^{-1}, \lambda_{max}(A)^{-1} $.
\\
\\
\emph{Case: } $y = \lambda_{min}(A)^{-1}$ \quad ($y = \lambda_{max}(A)^{-1}$)
\\
\\
Assume $y = \lambda_{min}(A)^{-1}$ ($y = \lambda_{max}(A)^{-1}$). By assumption of $y$ being a minimum of $W_A(x)$ in the interval $x \in \left[ \lambda_{min}(A)^{-1}, \lambda_{max}(A)^{-1} \right]$, and since $W_A(x)$ is smooth and convex inside this interval, we must have $W_A' (y) \geq 0$ ($W_A' (y) \leq 0$). Further, for $W_A(y)$ not to equal $+\infty$ (in which case it would certainly not be a minimum), we must also have $\langle \boldsymbol{1}, P_{min} \boldsymbol{1} \rangle =0$ ($\langle \boldsymbol{1}, P_{max} \boldsymbol{1} \rangle =0$), where $P_{min}$ ($P_{max}$) is the projection onto the eigenspace of $A$ associated with $\lambda_{min}$ ($\lambda_{max}$). We then choose $\boldsymbol{u}$ to equal
\begin{align} \label{eq:proof main thm lin alg otp upperB choice u deg case}
\boldsymbol{u}=\sum_{\lambda \in \sigma(A), \ \lambda \neq y^{-1}} \frac{P_\lambda \boldsymbol{1} }{1- \lambda y} + \sqrt{- y W_A' (y)} \boldsymbol{w}
\end{align}
Where $\boldsymbol{w}$ is an arbitrary unit-vector in $E_{min}(A)$ ($E_{max}(A)$) -- the eigenspace associated with $\lambda_{min}$ ($\lambda_{max}$). Note that $\sqrt{- y W_A' (y)}$ is a real number, since $W_A' (y) \geq 0$ and $y=\lambda_{min}(A)^{-1} < 0$ ($W_A' (y) \leq 0$ and $y=\lambda_{max}(A)^{-1} > 0$). We have also chosen the eigenvectors of $A$ to be real vectors, in case  $A$ is real symmetric, which makes $\boldsymbol{u}$ a real vector in this case. When $\boldsymbol{u}$ has the form (\ref{eq:proof main thm lin alg otp upperB choice u deg case}), it indeed satisfies the properties from the proposition.

Since $\langle \boldsymbol{1}, P_{min} \boldsymbol{1} \rangle =0$ ($\langle \boldsymbol{1}, P_{max} \boldsymbol{1} \rangle =0$), we have $\langle \boldsymbol{1}, \boldsymbol{w} \rangle = 0$, and also using that the eigen-basis of $A$ is orthogonal gives us 
\begin{align*}
\langle \boldsymbol{1}, \boldsymbol{u} \rangle = \sum_{\lambda \in \sigma(A), \ \lambda \neq y^{-1}} \frac{\langle \boldsymbol{1}, P_\lambda \boldsymbol{1} \rangle }{1- \lambda y} = W_A (y) \\
\langle \boldsymbol{u} , A \boldsymbol{u} \rangle = \sum_{\lambda \in \sigma(A), \ \lambda \neq y^{-1}} \lambda \frac{\langle \boldsymbol{1}, P_\lambda \boldsymbol{1} \rangle }{(1- \lambda y)^2} - \frac{y}{y} W_A' (y) = 0 \\
|\boldsymbol{u}|^2 = \sum_{\lambda \in \sigma(A), \ \lambda \neq y^{-1}} \frac{\langle \boldsymbol{1}, P_\lambda \boldsymbol{1} \rangle }{(1- \lambda y)^2} - y W_A' (y) = \sum_{\lambda \in \sigma(A), \ \lambda \neq y^{-1}} \frac{\langle \boldsymbol{1}, P_\lambda \boldsymbol{1} \rangle }{(1- \lambda y)^2} (1-y \lambda) \\
= \sum_{\lambda \in \sigma(A), \ \lambda \neq y^{-1}} \frac{\langle \boldsymbol{1}, P_\lambda \boldsymbol{1} \rangle }{1- \lambda y} = \langle \boldsymbol{1}, \boldsymbol{u} \rangle 
\end{align*}
combining all relations above, proves that $\boldsymbol{u}$ satisfies the desired conditions, i.e. $|\boldsymbol{u}|^2=\langle \boldsymbol{1}, \boldsymbol{u} \rangle = W_A(y)$ and $\langle \boldsymbol{u} , A \boldsymbol{u} \rangle =0$. 
\\
\\
\emph{Case: } $y \neq  \lambda_{min}(A)^{-1}, \lambda_{max}(A)^{-1} $
\\
\\
If $y \neq  \lambda_{min}(A)^{-1}, \lambda_{max}(A)^{-1} $, then by assumption of $y$ being a minimum of $W_A(x)$ in the interval $x \in \left[ \lambda_{min}(A)^{-1}, \lambda_{max}(A)^{-1} \right]$, and since $W_A(x)$ is smooth inside this interval, we must have $W_A' (y)=0$. We must also have $y^{-1} \notin \sigma(A)$, which means that $(I-y A)^{-1}$ is well-defined. This means that we can choose $\boldsymbol{u}$ to be 
\begin{align} \label{eq:proof main thm lin alg otp upperB choice u non deg}
\boldsymbol{u} = (I-y A)^{-1} \boldsymbol{1} = \sum_{i=1} \frac{\langle \boldsymbol{u}_i , \boldsymbol{1} \rangle}{1-\lambda_i y} \boldsymbol{u}_i
\end{align}
where in case $A$ is real symmetric, the eigenvectors $\boldsymbol{u}_i$ have been chosen to be real. The condition $W_A' (y)=0$ now becomes 
\begin{align*}
0 =\sum_{i=1} \lambda_i \frac{|\langle \boldsymbol{u}_i , \boldsymbol{1} \rangle|^2}{(1-\lambda_i y)^2} = \langle  (I-y A)^{-1}\boldsymbol{1}, A (I-y A)^{-1} \boldsymbol{1} \rangle = \langle \boldsymbol{u}, A \boldsymbol{u} \rangle
\end{align*}
Thus, $\boldsymbol{u}$ indeed satisfies $\langle \boldsymbol{u}, A \boldsymbol{u} \rangle = 0$. It also follows from the form of $\boldsymbol{u}$ given in (\ref{eq:proof main thm lin alg otp upperB choice u non deg}) that 
\begin{align*}
\langle \boldsymbol{1} , \boldsymbol{u} \rangle = \langle \boldsymbol{1}, (I-y A) (I-y A)^{-2} \boldsymbol{1} \rangle = |\boldsymbol{u}|^2- y \langle \boldsymbol{u}, A \boldsymbol{u} \rangle = |\boldsymbol{u}|^2
\end{align*}
Thus, $\boldsymbol{u}$ also satisfies $|\boldsymbol{u}|^2 = \langle \boldsymbol{1} , \boldsymbol{u} \rangle$, and lastly, by definition of $W_A(y)$, we have $|\boldsymbol{u}|^2 = \langle \boldsymbol{1} , \boldsymbol{u} \rangle = W_A(y)$, proving the proposition in this case. 
\end{proof}

\bigskip

We are now finally in a position to prove Proposition \ref{lemma:linear alg optimization}

\begin{proof}[Proof of Proposition \ref{lemma:linear alg optimization}] \label{eq:lin alg lemma proof} \hfill

It follows straightforwardly from Proposition \ref{prop:existence of optimizer vector} that for any weighted real symmetric adjacency matrix $A$ of $G$, we have 
\begin{align*} 
\sup_{\boldsymbol{v} \in \mathcal{S}_n} \left\{ |\boldsymbol{v}|^2 \ \middle| \ \langle \boldsymbol{v} , A \boldsymbol{v} \rangle =0  \right\}  \geq \min \left\{ W_A(x) \ \middle| \ \lambda_{min}(A)^{-1} \leq x \leq \lambda_{max}(A)^{-1} \right\} 
\end{align*} 
The converse inequality is the content of Proposition \ref{prop:lin alg opt lower bound}, given the assumptions that $G$ is of order $n \geq 3$ and $A \neq 0$, thus proving the proposition. 
\end{proof}

\bigskip

We are now also finally ready to prove Lemma \ref{lemma:2nd equality}. 

\begin{proof}[Proof of Lemma \ref{lemma:2nd equality}] \hfill

It follows from Proposition \ref{lemma:linear alg optimization} that if $G$ is of order $n \geq 3$ and $A \neq 0$, we have
\begin{align} \label{eq:proof main lin alg opt real}
\sup_{\boldsymbol{v} \in \mathcal{S}_n} \left\{ |\boldsymbol{v}|^2 \ \middle| \ \langle \boldsymbol{v} , A \boldsymbol{v} \rangle =0  \right\} = \min \left\{ W_A(x) \ \middle| \ \lambda_{min}(A)^{-1} \leq x \leq \lambda_{max}(A)^{-1} \right\} \ \: ,
\end{align}
where $A$ can be any real symmetric weighted adjacency matrix for $G$. It is also straightforward to verify, using e.g. Cauchy-Schwarz for the LHS, that if $A=0$, then both sides of eq. (\ref{eq:proof main lin alg opt real}) equal $n$, given that $\lambda_{min}(A)^{-1}$ and $\lambda_{max}(A)^{-1}$ are interpreted as $-\infty$ and $+\infty$ (or just demarcating any interval), hence the equality still holds in case $A=0$. Thus, taking an infimum over $A$ on both sides of eq. (\ref{eq:proof main lin alg opt real}) and using the definitions of $\alpha_{\mathcal{S}}(G)$ from definition \ref{def:spherical ind number} gives us that for any simple graph $G$ of order $n \geq 3$  
\begin{align} \label{eq:proof main lin alg opt real inf A}
\alpha_{\mathcal{S}}(G) = \inf_A \min \left\{ W_A(x) \ \middle| \ \lambda_{min}(A)^{-1} \leq x \leq \lambda_{max}(A)^{-1} \right\} \ \: ,
\end{align}
where the infimum is taken over all real symmetric weighted adjacency matrices.

What is left is now to prove that (\ref{eq:proof main lin alg opt real inf A}) still holds if $G$ is of order $n=1$ or $n=2$. If $G$ is empty, then the only possible weighted adjacency matrices are $A=0$, and we have already proven that (\ref{eq:proof main lin alg opt real}) holds if $A=0$, which means that (\ref{eq:proof main lin alg opt real inf A}) still holds if $G$ is empty. The only non-empty simple graph $G$ of order $n \leq 2$ is the complete graph $K_2$ on two vertices. If it is indeed the case that $G=K_2$, then it is not difficult to check directly that
\begin{align*}
1=\alpha_{\mathcal{S}}(K_2) = \inf_A \min \left\{ W_A(x) \ \middle| \ \lambda_{min}(A)^{-1} \leq x \leq \lambda_{max}(A)^{-1} \right\} \ \: .
\end{align*}
We conclude that the equality (\ref{eq:proof main lin alg opt real inf A}) holds for any simple graph $G$, thus proving Lemma \ref{lemma:2nd equality}.
\end{proof}

\section{Concluding Remarks and outlook}

We have here presented two new characterizations of the Lovasz theta function $\vartheta(G)$, one of them showing it to equal the spherical independence number $\alpha_{\mathcal{S}}(G)$ and the other showing $\vartheta(G)$ to be expressible in terms of minimums of weighted walk-generating functions as
$$\vartheta(G)=\inf_{A} \min_{\lambda_{min}(A)^{-1} \leq x \leq \lambda_{max}(A)^{-1}} W_A (x) \ . $$
This was the content Theorem \ref{theorem:char lovasz numb}. This result shows a connection between two very different quantities in algebraic graph theory -- walk-generating functions and the Lovasz theta function, and it also shows $\vartheta(G)$ to equal a very natural relaxation of the independence number -- namely $\alpha_{\mathcal{S}}(G)$ where Boolean indicator vectors $\{0,1\}^n$ are relaxed to a spherical superset of vectors $\mathcal{S}_n$. Meanwhile, the new characterization of $\vartheta(G)$ also has more direct applications and consequences, specifically related to generalizations of the Hoffman bound and the maximum independent set problem. 

In Section \ref{sec:main results} we saw how the new characterization straightforwardly entails natural generalizations of the Hoffman upper bound on $\vartheta(G)$ to non-regular graphs. In particular, the upper bound $\vartheta(G) \leq \min_{\lambda_n^{-1} \leq x \leq 0} W(x)$ was derived in Corollary \ref{cor:precise compl generaliz}, and in Proposition \ref{prop:better Hbound} it was shown that this bound is always better or equal to the previously established generalization of the Hoffman bound on $\vartheta(G)$, $n \left(1 - \delta / \mu_1 \right)$.

An area for further investigation is offered by looking into whether the vector $\boldsymbol{v}$ associated with the spherical independence number can be a tool for tackling the maximum independent set problem. As mentioned in the Introduction, calculating, or even reasonably approximating, $\alpha(G)$ is an NP-hard problem. Meanwhile, $\vartheta(G)$ can be approximated in polynomial time. In light of Theorem \ref{theorem:char lovasz numb}, this means that we in polynomial time can approximate a vector $\boldsymbol{v}$, which satisfies $|\boldsymbol{v}|^2=\alpha_{\mathcal{S}}(G)=\vartheta(G)$, $\langle \boldsymbol{v}, A \boldsymbol{v} \rangle=0$ and $|\boldsymbol{v}|^2=\langle \boldsymbol{1}, \boldsymbol{v} \rangle $, where $A$ is the weighted adjacency matrix minimizing $\lambda_{max}(J-A)$ (i.e. for which $\lambda_{max}(J-A) = \vartheta(G)$). This $\boldsymbol{v}$ equals $(I-y A)^{-1} |_{\supp (I-y A)} \boldsymbol{1}$ plus a potential term in the kernel of $I-yA$ (whose magnitude is fixed by the constraints), and $y$ is the unique minimum of the strictly convex function $W_A (x)$ in the interval $[\lambda_{min}(A)^{-1}, \lambda_{max}(G)^{-1}]$, see Proposition \ref{prop:existence of optimizer vector} for more details. This $\boldsymbol{v}$ can be viewed as a generalization of the indicator function of the maximum independent set of $G$, and it would then obviously be very interesting to see whether $\boldsymbol{v}$ can tell us anything about $\alpha(G)$ or the maximum independent sets. It can be hoped that $\boldsymbol{v}$ provides a sufficiently good first guess for the indicator function of the maximum independent set of $G$ to be of practical interest.

We are of course also left the the obvious open question of whether the new characterizations of $\vartheta(G)$ from Theorem \ref{theorem:char lovasz numb} has any further applications. The same could be asked for the analysis result Theorem \ref{lemma:app:main lemma for reciprocal functs} from Appendix \ref{app:special reciprocal function}.

\section*{Acknowledgments}

The author wants to thank David Earl Roberson, Jereon Zuiddam, Laura Man\v{c}inska, for very helpful comments and discussions.

This publication was supported by VILLUM FONDEN via the QMATH Centre of Excellence (Grant No.10059).

\bibliographystyle{acm}
\bibliography{refs}

\newpage

\begin{appendices}

\section{Duality result for the critical points of special reciprocal functions} \label{app:special reciprocal function}

This section will concern functions that can be written as finite sums of the reciprocals of certain linear functions. These are real functions, $f$, of the type: 
\begin{align} \label{eq:app:first def of reciproc functions}
f(x) = \sum_{i=1}^N \frac{\alpha_i}{1-\beta_i x}
\end{align}
which is defined everywhere on $\mathbb{R} \backslash \bigcup_{i=1}^N \{ \beta_i^{-1} \}$. We only consider finite sums, $N \in \mathbb{N}$, and restrict ourselves to the case where all $\alpha_i$ are nonnegative $\alpha_i \geq 0$. Note that this class of functions generalizes the weighted walk-generating functions of graphs, where weighted here means weighted by any Hermitian adjacency matrix. The new class of functions defined here is in fact more general than the class of weighted walk-generating functions, since the functions of the form \eqref{eq:app:first def of reciproc functions} with all $\alpha_i$'s positive, for example capture any function $g(x)$ that can be written as $g(x)=\langle \boldsymbol{u}, (I-x H)^{-1} \boldsymbol{u} \rangle$, where $\boldsymbol{u}$ is an arbitrary vector, and $H$ is an arbitrary Hermitian matrix.

We shall prove multiple properties of such functions, $f$, that can be written on the form from \cref{eq:app:first def of reciproc functions}, mainly concerning its critical points, i.e. points $x \in \mathbb{R}$ in the domain of $f$ for which its derivative vanish, i.e. $f ' (x)=0$. We are particularly interested in \emph{maximal critical points} which are critical points $x \in \mathbb{R}$ of $f$ for which $f(x)$ equals the maximal value $f$ attains at one of its critical points, i.e., $x$ is a maximal critical point if $f' (x)=0$ and $f(x) \geq f(y)$ for all $y \in \mathbb{R}$ satisfying $f' (y)=0$.

The focus of this section will be on proving the following theorem, which puts a lot of restrictions on the location of maximal critical points of function $f$ of the form \eqref{eq:app:first def of reciproc functions}.
\begin{theorem} \label{lemma:app:main lemma for reciprocal functs}
Let the real function $f : \mathbb{R} \backslash \bigcup_{i=1}^N \{ \beta_i^{-1} \} \rightarrow \mathbb{R}$ be given by 
\begin{align*}
f(x) = \sum_{i=1}^N \frac{\alpha_i}{1-\beta_i x}
\end{align*}
with $\alpha_i \geq 0$ for all $i$. Suppose that $f$ is not a constant function and has at least one critical point. 

Then there must exist an $i$ s.t. $\beta_i > 0$, and a $j$ s.t. $\beta_j < 0$. Suppose now that $x_m$ is a maximal critical point of $f$, meaning that it equals
\begin{align*}
f(x_m)=\max \left\{f(y) \  \big\vert \
y \in \mathbb{R} \ , \ f ' (y)=0 \right\}
\end{align*}
and satisfies $f' (x_m)=0$. Then $x_m$ must lie in the following interval
\begin{align*}
x_m \in \left( \beta_{min}^{-1},  \beta_{max}^{-1} \right)
\end{align*}
where $\beta_{min}:=\min_{1 \leq i \leq N} \beta_i$ and $\beta_{max}:=\max_{1 \leq i \leq N} \beta_i$. $x_m$ must further be a local minimum and is the unique critical point in the interval $\left( \beta_{min}^{-1},  \beta_{max}^{-1} \right)$, meaning
\begin{align*}
f(x_m)=\min_{\beta_{min}^{-1} \leq x \leq \beta_{max}^{-1}} f(x)
\end{align*}
\end{theorem}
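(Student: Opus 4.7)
The plan has three pieces, mirroring the theorem's claims: the mixed-sign assertion, the existence and uniqueness of a local minimum $x^*$ of $f$ in the open central strip $I := (\beta_{min}^{-1},\beta_{max}^{-1})$, and the identification of $x_m$ with $x^*$ — the first two via direct derivative analysis, the third (the real content) via a quadratic duality. I would first, without loss of generality, drop every $i$ with $\alpha_i = 0$ so that $\alpha_i > 0$ for the remaining indices. Then $f'(x) = \sum_i \alpha_i\beta_i(1-\beta_i x)^{-2}$ has each summand with the sign of $\beta_i$, so $f'$ can vanish only if both signs occur among the $\beta_i$ (or $f$ is constant). On $I$ every factor $1-\beta_i x$ is strictly positive, so $f''(x) = 2\sum_i \alpha_i\beta_i^2(1-\beta_i x)^{-3} > 0$; combined with $f\to+\infty$ at both endpoints of $I$ this forces a unique minimizer $x^*\in I$, which is also the only critical point of $f$ inside $I$.

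The main content is the inequality $f(y)\leq f(x^*)$ for every critical point $y$ of $f$, regardless of whether $y\in I$. The plan is to use the elementary quadratic identity
\[
\frac{1}{1-\beta_i x} \;=\; \sup_{w\in\mathbb{R}}\bigl[\,2w - w^2(1-\beta_i x)\,\bigr] \qquad (1-\beta_i x>0),
\]
summed coordinate-wise with weights $\alpha_i$, to obtain the dual representation $f(x) = \sup_{\boldsymbol{w}\in\mathbb{R}^N}\Phi(\boldsymbol{w},x)$ for every $x\in I$, where $\Phi(\boldsymbol{w},x) := a(\boldsymbol{w}) + x\,b(\boldsymbol{w})$ with $a(\boldsymbol{w}) := 2\sum_i \alpha_i w_i - \sum_i \alpha_i w_i^2$ and $b(\boldsymbol{w}) := \sum_i \alpha_i\beta_i w_i^2$, the pointwise supremum being attained at $w_i = (1-\beta_i x)^{-1}$. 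For any critical point $y$ the vector $\boldsymbol{w}(y)_i := (1-\beta_i y)^{-1}$ (well-defined since $y$ is not a pole) satisfies $b(\boldsymbol{w}(y)) = f'(y) = 0$ and $a(\boldsymbol{w}(y)) = f(y)$ by direct algebra. For any $\boldsymbol{w}$ with $b(\boldsymbol{w}) = 0$ the affine map $\Phi(\boldsymbol{w},\cdot)$ is identically equal to $a(\boldsymbol{w})$ and is a pointwise lower bound for $f$ on all of $I$, so $a(\boldsymbol{w}) \leq \inf_I f = f(x^*)$; applying this to $\boldsymbol{w}(y)$ yields $f(y) \leq f(x^*)$, which, together with $x^*$ itself being a critical point of $f$, gives $f(x_m) = f(x^*)$.

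To finally place $x_m$ inside $I$ — equivalently to identify $x_m$ with $x^*$ — I would pin down the constrained supremum $S := \sup\{a(\boldsymbol{w}) : b(\boldsymbol{w}) = 0\}$ as being attained uniquely at $\boldsymbol{w}(x^*)$. The Lagrangian
\[
L_\lambda(\boldsymbol{w}) \;:=\; a(\boldsymbol{w}) - \lambda\,b(\boldsymbol{w}) \;=\; 2\sum_i \alpha_i w_i - \sum_i \alpha_i(1+\lambda\beta_i)w_i^2
\]
is strictly concave in $\boldsymbol{w}$ precisely when $1+\lambda\beta_i > 0$ for every $i$, that is when $\lambda\in(-\beta_{max}^{-1},\,-\beta_{min}^{-1})$; the Lagrange multiplier at $\boldsymbol{w}(x^*)$ works out to $-x^*$, which lies in this admissible range precisely because $x^*\in I$, so $L_{-x^*}$ has the unique global maximizer $\boldsymbol{w}(x^*)$. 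For any critical point $y$ with $f(y) = f(x^*)$ one then has $L_{-x^*}(\boldsymbol{w}(y)) = a(\boldsymbol{w}(y)) = f(y) = f(x^*) = L_{-x^*}(\boldsymbol{w}(x^*))$, so strict concavity forces $\boldsymbol{w}(y) = \boldsymbol{w}(x^*)$ and hence $y = x^*$. The most delicate part of the argument is precisely this matching: the dual identity for $f$ is valid exactly on $I$, and the range of Lagrange multipliers on which $L_\lambda$ is strictly concave is exactly the image $-I$ of $I$ under negation — it is this tight correspondence that both pins $x_m$ into $I$ and identifies it with $x^*$.
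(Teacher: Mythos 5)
Your proof is correct and takes a genuinely different route from the paper's. The paper proves Theorem \ref{lemma:app:main lemma for reciprocal functs} by induction on the number of summands $N$: a base case $N=2$, and an inductive step that strips off the term carrying the most extreme $\beta_i$, applies the inductive hypothesis to the truncated function $f|_k$, and tracks values and derivatives of $f$ and $f|_k$ across the various poles to reach a contradiction; Propositions \ref{lemma:app:smoothness of R_ab}--\ref{lemma:app:reciprocal:conditions for critical points} supply the smoothness and asymptotic bookkeeping that this case analysis relies on. You instead encode $f$ on the central strip $I=(\beta_{min}^{-1},\beta_{max}^{-1})$ as a pointwise supremum of functions affine in $x$, via the Fenchel-type identity $\tfrac{1}{1-t}=\sup_{w}\bigl[2w-w^2(1-t)\bigr]$ valid for $t<1$; each critical point $y$ of $f$ then supplies a feasible $\boldsymbol{w}(y)$ for the constrained program $\sup\{a(\boldsymbol{w})\mid b(\boldsymbol{w})=0\}$ with objective value $f(y)$, weak duality gives $f(y)\le\inf_{I}f=f(x^*)$, and strict concavity of the Lagrangian $L_{-x^*}$ --- which holds exactly because $x^*\in I$ --- forces the feasible maximizer to be unique, identifying $x_m$ with $x^*\in I$. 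I verified the algebra: $b(\boldsymbol{w}(y))=f'(y)$, $a(\boldsymbol{w}(y))=f(y)$ whenever $f'(y)=0$ (since $a(\boldsymbol{w}(y))-f(y)=-y\,f'(y)$), the stationarity multiplier is $\lambda=-x^*$, and the strict-concavity window is $\lambda\in(-\beta_{max}^{-1},-\beta_{min}^{-1})$, all as claimed. What this buys: your argument treats all $N$ uniformly in roughly half the space, avoiding the paper's intricate inductive case split, and it is conceptually the very same Lagrange duality that drives Proposition \ref{prop:coefficients eq} in the main proof (your stationarity equation $w_i(1+\lambda\beta_i)=1$ is the abstract form of $(1-x\lambda_i)\alpha_{op,i}=\langle\boldsymbol{1},\boldsymbol{u}_i\rangle$), so the lemma no longer looks like a separate technical tool but a second instance of the duality already in play. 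One shared bookkeeping point: both you and the paper begin by discarding indices with $\alpha_i=0$, which can shrink $\{\beta_i\}$ and thus widen $(\beta_{min}^{-1},\beta_{max}^{-1})$ relative to the literal theorem statement (which only assumes $\alpha_i\ge 0$); the conclusion should therefore be read with $\beta_{min},\beta_{max}$ ranging over indices with $\alpha_i>0$, which is precisely what gets used downstream in Proposition \ref{prop:lin alg opt lower bound}, so this is a reading convention rather than a gap in your argument.
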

See e.g. Figures \ref{fig:Golomb graph walk-generating function} and \ref{fig:P17} for visualizations of this result. Before proving Theorem \ref{lemma:app:main lemma for reciprocal functs}, we shall first prove some intermediate results. We start by making the following definitions to be able to discuss the given class of functions in a more systematic and easily readable manner.

Note first that no generality is lost if we assume that the numbers $\alpha_i$ are all positive and the numbers $\beta_i$ are all different when introducing the class of functions from \cref{eq:app:first def of reciproc functions}. This motivates the following definition of the function $R_{(\boldsymbol{\alpha},\boldsymbol{\beta})}$:
\\
\\
Let $\boldsymbol{\alpha}=\{\alpha_1,...,\alpha_N\}$ and $\boldsymbol{\beta}=\{\beta_1,...,\beta_N\}$ be sets of $k$ real numbers such that all $\alpha_i$ are positive and all $\beta_i$ are distinct, i.e., $\alpha_i > 0$, $\beta_i \neq \beta_j$ for all $i \neq j$. We shall w.l.o.g. always assume the elements in $\boldsymbol{\beta}$ to be ordered in descending order $\beta_1 > \beta_2 > ... > \beta_N$. We now define the function $R_{(\boldsymbol{\alpha},\boldsymbol{\beta})} : \mathbb{R} \ \backslash \ \bigcup_{i=1}^N \{ \beta_i^{-1} \mid \beta_i \neq 0 \} \rightarrow \mathbb{R}$
\begin{align*}
R_{(\boldsymbol{\alpha},\boldsymbol{\beta})}(x) 
:= \sum_{i=1}^N \frac{\alpha_i}{1-\beta_i x}
\end{align*}
We refer to the number $N$ as the \emph{order} of $R_{(\boldsymbol{\alpha},\boldsymbol{\beta})}$. Sometimes we shall denote the order of $f$ as $N(f)$ when we want to specify the function in question. Suppose the function $f$ is of the form described above, i.e., $f=R_{(\boldsymbol{\alpha},\boldsymbol{\beta})}$ for some $(\boldsymbol{\alpha},\boldsymbol{\beta})$. We then define $B^{-}(f)$ and $B^+(f)$ as the sets of respectively negative and positive elements of $\boldsymbol{\beta}$, and let $b^-(f)$ and $b^+(f)$ denote the number of negative and positive elements of $\boldsymbol{\beta}$, i.e., cardinality of $B^+(f)$ and $B^-(f)$
\begin{align*}
B^-(f):= & \{ \beta_i \in \boldsymbol{\beta} \mid \beta_i < 0\} \\
B^+(f):= & \{ \beta_i \in \boldsymbol{\beta} \mid \beta_i > 0\} \\
b^-(f):= & |B^-(f)| \\
b^+(f) := & |B^+(f)|
\end{align*}
Sometimes we shall omit the explicit reference to $f$ when it is clear what function we are talking about. 
Note that since the $\beta$'s are allowed to be $0$, we will not in general have $b^+ + b^- =N$. We are now ready to state our first results concerning functions of the type $R_{(\boldsymbol{\alpha},\boldsymbol{\beta})}$.

\begin{prop}[Smoothness of $R_{(\boldsymbol{\alpha},\boldsymbol{\beta})}$] \label{lemma:app:smoothness of R_ab}
$R_{(\boldsymbol{\alpha},\boldsymbol{\beta})}$ is a smooth function everywhere it is defined, i.e., everywhere on $\mathbb{R} \ \backslash \ \bigcup_{i=1}^N \{ \beta_i^{-1} \mid \beta_i \neq 0 \}$.
\end{prop}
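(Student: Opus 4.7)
The plan is to argue this smoothness claim termwise and then invoke closure of smoothness under finite sums. First I would fix $x_0 \in \mathbb{R} \setminus \bigcup_{i=1}^N \{\beta_i^{-1} \mid \beta_i \neq 0\}$ and show that each summand $g_i(x) := \frac{\alpha_i}{1-\beta_i x}$ is $C^\infty$ on an open neighborhood of $x_0$. For the indices $i$ with $\beta_i = 0$ this is trivial since $g_i \equiv \alpha_i$ is constant. For the remaining indices we have $\beta_i \neq 0$ and by assumption $x_0 \neq \beta_i^{-1}$, so the denominator $1 - \beta_i x$ is nonzero at $x_0$ and hence nonzero on a small open interval around $x_0$; on that interval $g_i$ is the quotient of two polynomials with nowhere-vanishing denominator, hence smooth.

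Next I would note that the full function $R_{(\boldsymbol{\alpha},\boldsymbol{\beta})} = \sum_{i=1}^N g_i$ is a finite sum of functions each smooth on a common open neighborhood of $x_0$, and that $C^\infty$ is closed under finite sums, so $R_{(\boldsymbol{\alpha},\boldsymbol{\beta})}$ is smooth at $x_0$. Since $x_0$ was arbitrary in the stated domain, smoothness holds everywhere on $\mathbb{R} \setminus \bigcup_{i=1}^N \{\beta_i^{-1} \mid \beta_i \neq 0\}$. If one wants to be fully explicit, the derivatives can even be written down in closed form as $R_{(\boldsymbol{\alpha},\boldsymbol{\beta})}^{(k)}(x) = \sum_{i=1}^N \frac{k!\, \alpha_i \beta_i^k}{(1-\beta_i x)^{k+1}}$ for every $k \geq 0$, which makes the smoothness manifest.

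There is really no obstacle here; the result is a direct consequence of the elementary facts that rational functions are smooth away from the zeros of their denominator and that $C^\infty$ is preserved under finite sums. The only small care needed is to handle the case $\beta_i = 0$ separately, since those indices contribute only a constant and do not appear in the excluded set of points in the domain.
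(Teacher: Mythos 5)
Your proof is correct and follows essentially the same route as the paper's: each summand is smooth on the stated domain (constant if $\beta_i=0$, otherwise a rational function with nonvanishing denominator), and a finite sum of smooth functions is smooth. The paper states this in one line; you simply spell out the termwise argument and the explicit derivative formula.
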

\begin{proof}
This follows immediately from $R_{(\boldsymbol{\alpha},\boldsymbol{\beta})}$ being defined as a finite sum of functions that are all smooth everywhere on $\mathbb{R} \ \backslash \ \bigcup_{i=1}^N \{ \beta_i^{-1} \mid \beta_i \neq 0 \}$.
\end{proof}

\begin{prop}[asymptotic behavior of $R_{(\boldsymbol{\alpha},\boldsymbol{\beta})}$] \label{lemma:app:asymptotics of R_ab}
$R_{(\boldsymbol{\alpha},\boldsymbol{\beta})}$ has the following asymptotic behavior:
\begin{enumerate}
    \item $R_{(\boldsymbol{\alpha},\boldsymbol{\beta})}(x)$ converges to
    $\left\{ \begin{matrix}
        \alpha_y & \ \text{if} \quad 0 \in \boldsymbol{\beta} \ \text{s.t.} \ \beta_y=0  \\
        0 &  \ \text{otherwise}
    \end{matrix} \right.$ in the limits $x \rightarrow \pm \infty$, and $R_{(\boldsymbol{\alpha},\boldsymbol{\beta})}$ is smooth in the intervals $\left( -\infty,\beta_l^{-1} \right)$ and $\left( \beta_r^{-1},+\infty \right)$ where $\beta_l$ is the largest element in $B^-$ and $\beta_r$ is the smallest element in $B^+$, supposing $B^- \neq \emptyset$ and $B^+ \neq \emptyset$ respectively. $R_{(\boldsymbol{\alpha},\boldsymbol{\beta})}$ must also approach $- \infty$ as $x$ approaches $\beta_l^{-1}$ from below and $\beta_r^{-1}$ from above.
    \item Supposing $\beta_i, \beta_{i+1} \in B^{-}$, then $R_{(\boldsymbol{\alpha},\boldsymbol{\beta})}(x)$ will be smooth in the interval $x \in \left( \beta_i^{-1}, \beta_{i+1}^{-1} \right)$ wherein $R_{(\boldsymbol{\alpha},\boldsymbol{\beta})}(x)$ approaches $+\infty$ as $x$ approaches $\beta_i^{-1}$ from above and $R_{(\boldsymbol{\alpha},\boldsymbol{\beta})}(x)$ approaches $-\infty$ as $x$ approaches $\beta_{i+1}^{-1}$ from below. 
    \item Supposing $\beta_i, \beta_{i+1} \in B^{+}$, then $R_{(\boldsymbol{\alpha},\boldsymbol{\beta})}(x)$ will be smooth in the interval $x \in \left( \beta_i^{-1}, \beta_{i+1}^{-1} \right)$ wherein $R_{(\boldsymbol{\alpha},\boldsymbol{\beta})}(x)$ approaches $-\infty$ as $x$ approaches $\beta_i^{-1}$ from above and $R_{(\boldsymbol{\alpha},\boldsymbol{\beta})}(x)$ approaches $+\infty$ as $x$ approaches $\beta_{i+1}^{-1}$ from below. 
    \item Supposing $\beta_N \in B^-$ and $\beta_1 \in B^+$, i.e., $B^-$ and $B^+$ are both non-empty, $R_{(\boldsymbol{\alpha},\boldsymbol{\beta})}(x)$ will be smooth in the interval $\left(\beta_N^{-1}, \beta_1^{-1} \right)$ wherein $R_{(\boldsymbol{\alpha},\boldsymbol{\beta})}(x)$ approaches $+\infty$ as $x$ approaches $\beta_N^{-1}$ from above and as $x$ approaches $\beta_1^{-1}$ from below. 
\end{enumerate}
\end{prop}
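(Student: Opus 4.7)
The proof of this proposition is essentially careful bookkeeping around the poles and signs of each summand; each of the four items follows by identifying an interval free of poles and then computing one-sided limits from the dominant term. The plan is to handle (i) smoothness and (ii) limiting behavior separately, and then to apply both to each of the four cases.

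First I would dispose of smoothness. By Proposition \ref{lemma:app:smoothness of R_ab}, $R_{(\boldsymbol{\alpha},\boldsymbol{\beta})}$ is smooth on $\mathbb{R} \setminus \{\beta_i^{-1} : \beta_i \neq 0\}$, so for each claim about smoothness on an open interval $I$, it suffices to verify that no $\beta_j^{-1}$ (with $\beta_j \neq 0$) lies in $I$. The key observation for this is that the map $\beta \mapsto \beta^{-1}$ is order-reversing when restricted to $B^{-}$ and when restricted to $B^{+}$ separately, and sends negative values to negative values and positive to positive. Thus, given the descending order $\beta_1 > \beta_2 > \cdots > \beta_N$: for two consecutive $\beta_i, \beta_{i+1} \in B^-$, any other negative $\beta_j$ satisfies either $\beta_j > \beta_i$ (giving $\beta_j^{-1} < \beta_i^{-1}$) or $\beta_j < \beta_{i+1}$ (giving $\beta_j^{-1} > \beta_{i+1}^{-1}$), while positive $\beta_j$ give positive reciprocals, and so none of them lie in $(\beta_i^{-1}, \beta_{i+1}^{-1})$. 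The same argument handles consecutive $\beta_i, \beta_{i+1} \in B^+$ and the two rays $(-\infty, \beta_l^{-1})$ and $(\beta_r^{-1}, +\infty)$. For item 4, the interval $(\beta_N^{-1}, \beta_1^{-1})$ straddles zero, and all other negative (resp.\ positive) $\beta_j^{-1}$ lie weakly below $\beta_N^{-1}$ (resp.\ weakly above $\beta_1^{-1}$) by the same ordering argument.

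Second, I would compute the relevant limits. For $x \to \pm\infty$, each term $\alpha_i/(1-\beta_i x) \to 0$ when $\beta_i \neq 0$, while the (at most one) term with $\beta_i = 0$ contributes the constant $\alpha_i$; this immediately yields the asymptotics in item 1. For the one-sided limits at an excluded point $\beta_k^{-1}$, every term $\alpha_j/(1-\beta_j x)$ with $j \neq k$ is continuous at $\beta_k^{-1}$ (since $\beta_k^{-1} \neq \beta_j^{-1}$), hence remains bounded in a neighborhood of it. The limit is therefore controlled entirely by $\alpha_k/(1-\beta_k x)$, which, since $\alpha_k > 0$, has the same sign as $1-\beta_k x$. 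A direct case check gives: if $\beta_k > 0$ then $1-\beta_k x > 0$ for $x < \beta_k^{-1}$ and $< 0$ for $x > \beta_k^{-1}$, while if $\beta_k < 0$ the opposite holds; in both cases $|1-\beta_k x| \to 0$ as $x \to \beta_k^{-1}$, so the dominant term blows up to $\pm\infty$ according to sign.

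Combining these two inputs proves items 1--4 by simply reading off which sign occurs on which side. For instance, in item 2 with $\beta_i, \beta_{i+1} \in B^{-}$: at $\beta_i^{-1}$ approached from above we have $\beta_i < 0$ and $x > \beta_i^{-1}$, so $1-\beta_i x > 0$, giving $R \to +\infty$; at $\beta_{i+1}^{-1}$ approached from below, $\beta_{i+1} < 0$ and $x < \beta_{i+1}^{-1}$, so $1-\beta_{i+1} x < 0$, giving $R \to -\infty$. Items 1, 3, and 4 follow identically, and the boundary behavior at $\beta_l^{-1}$ and $\beta_r^{-1}$ in item 1 comes from the same sign calculation.

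There is no substantive obstacle in this proof; the only mild care required is keeping the ordering straight after inversion (reciprocation reverses order on each of $B^-, B^+$ but does not interleave them, so the relative positions of the reciprocals are completely determined by the original ordering together with the sign). Once this is set up, both the smoothness claims and the one-sided infinite limits follow by inspection of the dominant term.
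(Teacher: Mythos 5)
Your proof is correct and takes essentially the same approach as the paper: smoothness is reduced to checking that the relevant intervals are free of poles via Proposition \ref{lemma:app:smoothness of R_ab}, and each one-sided divergence is read off from the sign of the single dominant term $\alpha_k/(1-\beta_k x)$, the remaining terms being bounded. Your write-up is in fact slightly more explicit than the paper's about why reciprocation of the descending list $\boldsymbol{\beta}$ leaves the stated intervals pole-free, but the underlying argument is the same.
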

\begin{proof}
If $\beta_x=0$ for some $x$, then we simply end up with the constant term $\alpha_x$ in the limits $x \rightarrow \pm \infty$, since all non-constant terms are proportional to $1/x$ and vanish in these limits. 

The smoothness properties of $R_{(\boldsymbol{\alpha},\boldsymbol{\beta})}$ can be derived from Proposition \ref{lemma:app:smoothness of R_ab} by checking that $R_{(\boldsymbol{\alpha},\boldsymbol{\beta})}$ is defined everywhere in the relevant intervals. 

All the asymptotic relations can easily be checked by noting that if we assume $\beta_i > 0$ ($\beta_i < 0$) then the term $\frac{\alpha_i}{1-\beta_i x}$ approaches $+\infty$ ($-\infty$) as $x$ approaches $\beta_i^{-1}$ from below, and $\frac{\alpha_i}{1-\beta_i x}$ will approach $- \infty$ ($+\infty$) as $x$ approaches $\beta_i^{-1}$ from above. This can be seen by setting $x=\beta_i^{-1}+\epsilon$ giving $\frac{\alpha_i}{1-\beta_i x}=-\frac{\alpha_i}{\beta_i} \frac{1}{\epsilon}$, and we by assumption have $\alpha_i > 0$. Since all $\beta$'s are also different, by assumption, no other terms in $R_{(\boldsymbol{\alpha},\boldsymbol{\beta})}$ will diverge in the limit $x \rightarrow \beta_i^{-1}$, and neither will a finite sum of them.
\end{proof}

We next, like in Section \ref{subsec:spherical=walk gen}, make the following definition of the \emph{central strip}, since this is an interval we shall encounter repeatedly in the following arguments. Suppose the function $f$ is of the form $f=R_{(\boldsymbol{\alpha},\boldsymbol{\beta})}$. If $\beta_N <0$ and $\beta_1 > 0$, i.e., if both $B^+(f)$ and $B^-(f)$ are non-empty, we define the \emph{central strip} $S(f)$ of $f$ as the interval 
\begin{align*} 
S(f):=\left( \beta_N^{-1},\beta_1^{-1} \right)
\end{align*}
The following propositions also deal with the critical points of functions of the form $f=R_{(\boldsymbol{\alpha},\boldsymbol{\beta})}$. It is not too hard to show from considering the equation $R_{(\boldsymbol{\alpha},\boldsymbol{\beta})}' (x)=0$, which can be reformulated as polynomial equation of degree at most $2 N(f)-2$, that such a function $f$ can at most have $2 (N(f)-1)$ critical points, assuming $f$ is non-constant. 

\begin{prop}[Properties of the central strip] \label{lemma:app:properties of central strip}
Suppose the function $f$ is of the form $f=R_{(\boldsymbol{\alpha},\boldsymbol{\beta})}$ and the central strip $S(f)$ of $f$ exists. 
\begin{enumerate}
    \item $f$ is smooth everywhere on $S(f)$
    \item $f(x) > 0$ for all $x \in S(f)$
    \item $f''(x) > 0$ for all $x \in S(f)$
    \item $f$ has one and only one critical point in $S(f)$, which is also a minimum on $S(f)$
\end{enumerate}
\end{prop}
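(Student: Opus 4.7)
The plan is to reduce all four claims to the single elementary observation that $1-\beta_i x>0$ for every $i\in\{1,\ldots,N\}$ and every $x\in S(f)=(\beta_N^{-1},\beta_1^{-1})$. To verify this, I would use the chosen ordering $\beta_1>\beta_2>\cdots>\beta_N$ together with the hypothesis $\beta_N<0<\beta_1$. Taking reciprocals among the positive $\beta_i$'s gives $\beta_i^{-1}\geq\beta_1^{-1}$, and among the negative $\beta_i$'s gives $\beta_i^{-1}\leq\beta_N^{-1}$ (the reciprocal map reverses order on negatives). A short case analysis on $\operatorname{sgn}(\beta_i)$ and $\operatorname{sgn}(x)$ then shows $\beta_i x<1$: if $\beta_i>0$ and $x\geq 0$, then $\beta_i x\leq \beta_1 x<\beta_1\cdot\beta_1^{-1}=1$; if $\beta_i>0$ and $x<0$, then $\beta_i x<0<1$; if $\beta_i<0$ and $x>0$, then $\beta_i x<0<1$; if $\beta_i<0$ and $x\leq 0$, then $|\beta_i x|\leq|\beta_N|\cdot|x|<|\beta_N|\cdot|\beta_N^{-1}|=1$; if $\beta_i=0$ the claim is trivial.

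Given this inequality, claim (1) is immediate from Proposition \ref{lemma:app:smoothness of R_ab}, since no denominator vanishes on $S(f)$. Claim (2) follows because every summand $\alpha_i/(1-\beta_i x)$ is then a ratio of two strictly positive numbers, and there is at least one such summand. Claim (3) follows by differentiating termwise,
\begin{align*}
f''(x)=\sum_{i=1}^{N}\frac{2\alpha_i\beta_i^{2}}{(1-\beta_i x)^{3}} \ \: ,
\end{align*}
which is a sum of non-negative terms on $S(f)$, at least one of which (the $i=1$ term, since $\beta_1>0$ and $\alpha_1>0$) is strictly positive.

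For claim (4), strict convexity from (3) already implies that any critical point of $f$ inside $S(f)$ is a strict local minimum and that there can be at most one such critical point. Existence of a critical point then follows from the boundary behaviour supplied by item 4 of Proposition \ref{lemma:app:asymptotics of R_ab}: $f(x)\to+\infty$ as $x\to\beta_N^{-1}$ from above and as $x\to\beta_1^{-1}$ from below. Combined with the continuity of $f$ on $S(f)$, a standard compactness argument applied to $[\beta_N^{-1}+\varepsilon,\beta_1^{-1}-\varepsilon]$ for sufficiently small $\varepsilon>0$ forces $f$ to attain its infimum on $S(f)$ at an interior point, which is necessarily a critical point and coincides with the unique minimum on $S(f)$.

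I do not anticipate any real obstacle, as the proposition is essentially a verification that the definition of the central strip is the correct one; the whole argument is driven by the inequality $1-\beta_i x>0$ on $S(f)$. The one place where care is genuinely needed is the sign analysis above, where the reversal of order under reciprocation of negative numbers has to be tracked carefully; the rest reduces to termwise manipulation and an application of Proposition \ref{lemma:app:asymptotics of R_ab}.
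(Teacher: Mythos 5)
Your proof is correct and follows essentially the same route as the paper's: establish that $1-\beta_i x>0$ for all $i$ and all $x\in S(f)$, read off smoothness, positivity, and strict convexity termwise, and then combine convexity with the boundary asymptotics to get a unique interior critical point. The only cosmetic difference is in item (4): the paper argues via $f'$ increasing from $-\infty$ to $+\infty$ across $S(f)$ and applies the intermediate value theorem to $f'$, whereas you argue via $f\to+\infty$ at both endpoints and extract an interior minimizer by compactness; both are standard and equivalent here.
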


\begin{proof}
\emph{1.} follows directly from Proposition \ref{lemma:app:asymptotics of R_ab} (\emph{4.}). 
\\
\emph{2.} can be seen by checking that all terms in $
R_{(\boldsymbol{\alpha},\boldsymbol{\beta})}(x) 
= \sum_{i=1}^N \frac{\alpha_i}{1-\beta_i x}
$ are positive if $x \in S(f)$. 
\\
\emph{3.} can similarly by seen by checking that all terms in $
R_{(\boldsymbol{\alpha},\boldsymbol{\beta})} '' (x) 
=2 \sum_{i=1}^N \beta_i^2 \frac{\alpha_i}{(1-\beta_i x)^3}
$ are non-negative if $x \in S(f)$ and some must be strictly positive since $S(f)$ exists.
\\
\emph{4.} Follows from $f'(x)$ being a monotonically increasing function on $S(f)$, since $f''(x) > 0$ for $x \in S(f)$. It can also be checked that $f'(x)$ grown from $-\infty$ to $+\infty$ in $S(f)$. Thus, $f'(x)$ must equal $0$ for one and only one $x \in S(f)$. 
\end{proof}

\begin{prop} \label{lemma:app:reciprocal:conditions for critical points}
$R_{(\boldsymbol{\alpha},\boldsymbol{\beta})}$ has a critical point, i.e., a point where its derivative vanishes, if and only if we either have $B^- \neq \emptyset$, $B^+ \neq \emptyset$ or we have $\boldsymbol{\beta}=\{0\}$ i.e. $R_{(\boldsymbol{\alpha},\boldsymbol{\beta})}$ is a constant function. 
\end{prop}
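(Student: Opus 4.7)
The plan is to prove the two directions of the equivalence separately, using the asymptotic analysis already established in Proposition \ref{lemma:app:asymptotics of R_ab} and Proposition \ref{lemma:app:properties of central strip} for the "if" direction, and a direct sign analysis of the derivative $R_{(\boldsymbol{\alpha},\boldsymbol{\beta})}'$ for the "only if" direction.

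For the "if" direction, I split into the two sufficient conditions. If $\boldsymbol{\beta}=\{0\}$, then $R_{(\boldsymbol{\alpha},\boldsymbol{\beta})}$ reduces to the constant function $\alpha_1$, whose derivative vanishes identically, so every point in its domain is a critical point. If instead $B^-(f) \neq \emptyset$ and $B^+(f) \neq \emptyset$, then the central strip $S(f)=(\beta_N^{-1},\beta_1^{-1})$ is a well-defined non-empty open interval, and item 4 of Proposition \ref{lemma:app:properties of central strip} already guarantees a (unique) critical point in $S(f)$.

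For the "only if" direction, I argue by contrapositive. Assume that we do not have both $B^-, B^+$ non-empty and that $\boldsymbol{\beta}\neq\{0\}$. Because the elements of $\boldsymbol{\beta}$ are distinct, the case $B^-=B^+=\emptyset$ forces $\boldsymbol{\beta}\subseteq\{0\}$, hence $\boldsymbol{\beta}=\{0\}$, which is excluded. Thus we may assume exactly one of $B^-,B^+$ is empty. A direct computation gives
\begin{equation*}
R_{(\boldsymbol{\alpha},\boldsymbol{\beta})}'(x) \;=\; \sum_{i=1}^N \frac{\alpha_i \beta_i}{(1-\beta_i x)^2}
\end{equation*}
on the domain of $R_{(\boldsymbol{\alpha},\boldsymbol{\beta})}$. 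Every denominator is strictly positive, and by assumption all non-zero $\beta_i$ share the same sign while at least one $\beta_i$ is non-zero (since $\boldsymbol{\beta}\neq\{0\}$ and the $\beta_i$ are distinct). Combined with $\alpha_i>0$, this forces every term with $\beta_i\neq 0$ to have the same strict sign, and terms with $\beta_i=0$ to vanish. Hence $R_{(\boldsymbol{\alpha},\boldsymbol{\beta})}'(x)$ is either strictly positive for all $x$ in the domain, or strictly negative for all $x$ in the domain, so $R_{(\boldsymbol{\alpha},\boldsymbol{\beta})}$ admits no critical point, which is the desired contradiction.

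The main (very minor) subtlety is the bookkeeping around the case $B^-=B^+=\emptyset$: one must remember that the definition of $R_{(\boldsymbol{\alpha},\boldsymbol{\beta})}$ builds in the distinctness of the $\beta_i$'s, so that at most one of them can equal $0$. No further obstacle is anticipated, since the "if" direction is a direct appeal to already-proved statements and the "only if" direction is a one-line sign computation.
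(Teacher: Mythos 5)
Your proof is correct and follows essentially the same route as the paper: the ``if'' direction appeals to Proposition~\ref{lemma:app:properties of central strip}(4) for the central strip and the constancy when $\boldsymbol{\beta}=\{0\}$, and the ``only if'' direction is the same sign analysis of $R_{(\boldsymbol{\alpha},\boldsymbol{\beta})}'(x)=\sum_i \alpha_i\beta_i/(1-\beta_i x)^2$, merely phrased as a contrapositive instead of a direct contradiction. The bookkeeping remark about distinctness of the $\beta_i$'s forcing $\boldsymbol{\beta}=\{0\}$ when $B^-=B^+=\emptyset$ is a nice explicit touch that the paper leaves implicit.
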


\begin{proof}
\emph{$R_{(\boldsymbol{\alpha},\boldsymbol{\beta})}$ has a critical point $\Rightarrow$ $B^- \neq \emptyset$ and $B^+ \neq \emptyset$ or $\boldsymbol{\beta}=\{0\}$}: 
\\
\\
Suppose $R_{(\boldsymbol{\alpha},\boldsymbol{\beta})}$ has a critical point. Consider now the derivative $R_{(\boldsymbol{\alpha},\boldsymbol{\beta})} '(x)$ of $R_{(\boldsymbol{\alpha},\boldsymbol{\beta})}$
\begin{align*}
R_{(\boldsymbol{\alpha},\boldsymbol{\beta})} ' (x) 
= \sum_{i=1}^N \beta_i \frac{\alpha_i}{(1-\beta_i x)^2}
\end{align*}
Since we have $\frac{\alpha_i}{(1-\beta_i x)^2}>0$ everywhere where $R_{(\boldsymbol{\alpha},\boldsymbol{\beta})}$ is defined, we see that $B^{-}=\emptyset$, $B^+ \neq \emptyset$ will lead to $R_{(\boldsymbol{\alpha},\boldsymbol{\beta})} '(x)>0$, and $B^{-} \neq \emptyset$, $B^+ = \emptyset$ will lead to $R_{(\boldsymbol{\alpha},\boldsymbol{\beta})} '(x) < 0$ for all $x$ where $R_{(\boldsymbol{\alpha},\boldsymbol{\beta})}$ is defined. Both cases contradicts the assumption that $R_{(\boldsymbol{\alpha},\boldsymbol{\beta})}$ has a critical point, so we must have either $B^{-} \neq \emptyset$, $B^+ \neq \emptyset$ or we must have $B^{-}= B^{+}= \emptyset$, in which case $\boldsymbol{\beta}=\{0\}$.
\\
\\
\emph{$B^- \neq \emptyset$ and $B^+ \neq \emptyset$ or $\boldsymbol{\beta}=\{0\}$ $\Rightarrow$ $R_{(\boldsymbol{\alpha},\boldsymbol{\beta})}$ has a critical point}:
\\
\\
If $B^- \neq \emptyset$ and $B^+ \neq \emptyset$ then the central strip of $R_{(\boldsymbol{\alpha},\boldsymbol{\beta})}$ exist which, by Proposition $\ref{lemma:app:properties of central strip}$ (\emph{4.}), means that $R_{(\boldsymbol{\alpha},\boldsymbol{\beta})}$ has at least one critical point. If $\boldsymbol{\beta}=\{0\}$, then $R_{(\boldsymbol{\alpha},\boldsymbol{\beta})}$ is a constant function, which means that all points in $\mathbb{R}$ are critical points of $R_{(\boldsymbol{\alpha},\boldsymbol{\beta})}$.
\end{proof}

We are now ready to prove Theorem \ref{lemma:app:main lemma for reciprocal functs}. 

\begin{proof}[Proof of Theorem \ref{lemma:app:main lemma for reciprocal functs}] 
Let $f$ be the function from Theorem \ref{lemma:app:main lemma for reciprocal functs}, i.e. $f : \mathbb{R} \rightarrow \mathbb{R} \backslash \bigcup_{i=1}^N \beta_i^{-1}$ is given by 
\begin{align*} 
f(x) = \sum_{i=1}^N \frac{\alpha_i}{1-\beta_i x}
\end{align*}
with $\alpha_i \geq 0$ for all $i$, and suppose that $f$ is not a constant function and has at least one critical point. 

We can w.l.o.g. assume $f$ to equal some function of the form $R_{(\boldsymbol{\alpha},\boldsymbol{\beta})}$ from the definition in (\ref{eq:app:first def of reciproc functions}), i.e., where $\alpha_i > 0$ for all $i$, and $\boldsymbol{\beta}$ is strictly ordered as $\beta_i < \beta_j$ for $i < j$. Getting an arbitrary $f$ on this form can be achieved by deleting zero-terms, combining terms or relabeling dummy-variables. Since $f$ by assumption is non-constant and has at least one critical point, it follows directly from Proposition \ref{lemma:app:reciprocal:conditions for critical points} that $B^+ (f) \neq \emptyset$ and $B^- (f) \neq \emptyset$, meaning that $\beta_i < 0$ and $\beta_j > 0$ for some $i$ and $j$. This proves the first result from Theorem \ref{lemma:app:main lemma for reciprocal functs}, and shows that $\beta_1 > 0$ and $\beta_N < 0$, entailing that the central strip $S(f)$ of $f$ exists. It follow directly from Proposition \ref{lemma:app:properties of central strip} that if a maximal critical point $x_m$ of $f$ lies in the central strip $S(f)$, then $x_m$ must be the unique critical point in $S(f)$, which is also a minimum of $f$ in $S(f)$. Thus, the rest of Theorem \ref{lemma:app:main lemma for reciprocal functs} follows if we prove the following entailment
\begin{align} \label{eq:app:proof of main reciprocal thm step 2}
f'(x_m)=0, \quad f(x_m)=\max_y \{f(y) \mid f ' (y)=0\} \quad \Rightarrow \quad x_m \in S(f)
\end{align}
We shall now prove the statement (\ref{eq:app:proof of main reciprocal thm step 2}) above. Note that we can start by assuming w.l.o.g. that $\beta_i \neq 0$ for all $i$ when proving (\ref{eq:app:proof of main reciprocal thm step 2}), since adding or subtracting a constant term from $f$ does not change the location of its maximal critical points, nor does it change $S(f)$. We now prove the statement (\ref{eq:app:proof of main reciprocal thm step 2}) by induction on the order $N \in \mathbb{N}$ of $f$. Remember, the order $N(R_{(\boldsymbol{\alpha},\boldsymbol{\beta})})$ of $R_{(\boldsymbol{\alpha},\boldsymbol{\beta})}$ is the number of terms in the sum defining $R_{(\boldsymbol{\alpha},\boldsymbol{\beta})}$, i.e., $N(R_{(\boldsymbol{\alpha},\boldsymbol{\beta})})=|\boldsymbol{\beta}|$. 

We shall specifically start by proving that if $g$ is a non-constant function of the form $g=R_{(\boldsymbol{\alpha},\boldsymbol{\beta})}$, with $\beta_i \neq 0$ for all $i$, and with at least one critical point, and has order $N(g)=2$, then any maximal critical point of $g$ must lie in $S(g)$. This is our base-case. The reason why $N=2$ has to be the base-case is that Proposition \ref{lemma:app:reciprocal:conditions for critical points} implies $N(f) \geq b^+(f) + b^-(f) \geq 1 + 1 \geq 2$ for $f$ non-constant and containing critical points. In the inductive step, we shall prove that if for all non-constant functions $g$ of the form $g=R_{(\boldsymbol{\alpha},\boldsymbol{\beta})}$ with at least one critical point, with $\beta_i \neq 0$ for all $i$, and with order $N(g) \leq k$, any maximal critical points of $g$ lies in $S(g)$, then the same will be true for any such function of order $N=k+1$. This will prove the statement (\ref{eq:app:proof of main reciprocal thm step 2}) for any relevant function $f$, since the order $N(f)$ must be a natural number by definition, thus completing the proof of Theorem \ref{lemma:app:main lemma for reciprocal functs}.
\\
\\
\emph{Proof of the base-case}: Let $g$ be any non-constant function of the form $g=R_{(\boldsymbol{\alpha},\boldsymbol{\beta})}$ with at least one critical point, with $\beta_i \neq 0$ for all $i$, and of order $N(g)=2$. Then by Proposition \ref{lemma:app:reciprocal:conditions for critical points}, we must have $\beta_1 > 0$ and $\beta_2 < 0$, which means $S(g)=\left( - |\beta_2|^{-1}, \beta_1^{-1} \right)$ and 
\begin{align*}
g(x)=\frac{\alpha_1}{1-\beta_1 x}+\frac{\alpha_2}{1+|\beta_2| x}
\end{align*}
We now see that for any $x,y$ in the domain of $g$, if $x \in S(g)$ and $y \notin S(g)$, we must have $g(x) \geq g(y)$. This is true since for $x \in S(f)$ both terms $\frac{\alpha_1}{1-\beta_1 x}$ and $\frac{\alpha_2}{1+|\beta_2| x}$ are positive. If $y < - |\beta_2|^{-1}$, then we straightforwardly get $\frac{\alpha_2}{1+|\beta_2| y} < 0$ and $\frac{\alpha_1}{1-\beta_1 y} < \frac{\alpha_1}{1-\beta_1 x}$ follows since $y  < \beta_2^{-1} < x < \beta_1^{-1}$, thus $g(y) < g(x)$. Similarly, if $y > \beta_1^{-1}$, a similar argument with the roles of the two terms reversed also shows $g(y) < g(x)$. Since $g$ by Proposition \ref{lemma:app:reciprocal:conditions for critical points} must have a single critical point $x_c$ in $S(g)$, this critical point must be the only maximal critical point of $g$, since $g(x_c) > g(y_c)$ holds for any critical point $y_c \notin S(g)$ by the result just proven (we do not even have to use that $y_c$ is a critical point, just that $y_c \notin S(g)$).  
\\
\\
\emph{Proof of the inductive step}:
let $k \in \mathbb{N}$ with $k \geq 2$. Assume now that for all non-constant functions $g$ of the form $g=R_{(\boldsymbol{\alpha},\boldsymbol{\beta})}$ with at least one critical point, with $\beta_i \neq 0$ for all $i$, and of order $N(g) \leq k$, any maximal critical points of $g$ lies in $S(g)$. Let now $f$ be any non-constant function of the form $f=R_{(\boldsymbol{\alpha},\boldsymbol{\beta})}$ with at least one critical point, with $\beta_i \neq 0$ for all $i$, and of order $N(f) = k+1$. We want to prove that any maximal critical point $x_m$ of $f$ lies in $S(f)$. This will be proven by contradiction. Suppose that $x_m$ is a maximal critical point of $f$ satisfying $x_m \notin S(f)$. Then we must either have $x_m < \beta_{k+1}^{-1}$ or $x_m > \beta_1^{-1}$ (Note that since $N(f)=k+1$, we have $\beta_{min}=\beta_{k+1}$). We shall here focus on the case $x_m < \beta_{k+1}^{-1}$. The argument leading to contradiction in the case $x_m > \beta_1^{-1}$ is in essence identical to the case $x_m < \beta_{k+1}^{-1}$. The only differences being some sign-changes and label-changes.

Consider the case $x_m < \beta_{k+1}^{-1}$. To prove that this is a contradiction, we introduce the function $f|_k$, defined as 
\begin{align*}
f|_k (x):= R_{(\boldsymbol{\alpha} \backslash \alpha_{k+1},\boldsymbol{\beta} \backslash \beta_{k+1})}=\sum_{i=1}^k \frac{\alpha_i}{1-\beta_i x}=f(x)-\frac{\alpha_{k+1}}{1-\beta_{k+1} x}
\end{align*}
Note first that $x_m$ being a maximal critical point entails $b^{-}(f) \geq 2$, since if we had $b^{-} (f)=1$ then $\beta_i > 0$ for all $1 \leq i \leq k$, leading to $f|_k (x_m) < f|_k (y)$ for any $y \in S(f)$ and we also get $\frac{\alpha_{k+1}}{1-\beta_{k+1}x_m} <0 < \frac{\alpha_{k+1}}{1-\beta_{k+1} y }$, thus entailing $f(y) > f(x_m)$ for any $y \in S(f)$. Since the central strip $S(f)$ of $f$ contains a critical point by Proposition \ref{lemma:app:reciprocal:conditions for critical points}, this contradicts the assumption of $x_m$ being a maximal critical point. Thus, we must have $b^{-}(f) \geq 2$, entailing $b^{-} (f|_k ) \geq 1$. Since $b^{+} (f|_k )=b^{+} (f) \geq 1$ we now have $B^{+}(f|_k) , B^{-}(f|_k)  \neq \emptyset$, and by Proposition \ref{lemma:app:reciprocal:conditions for critical points}, $f|_k$ must be a non-constant function which has critical points. By assumption, we have $\beta_i \neq 0$ for all $i$ and $f|_k$ is of the form $f|_k = R_{(\boldsymbol{\alpha} \backslash \alpha_{k+1},\boldsymbol{\beta} \backslash \beta_{k+1})}$. Finally, note that the order of $f|_k$ equals $N(f|_k )=k$. Hence, by assumption of the inductive hypothesis, any maximal critical point of $f|_k$ must lie in the central strip $S(f|_k )$ of $f|_k$. There are now two cases to consider. Either $x_m \in S(f|_k)$ or $x_m \notin S(f|_k)$. We shall prove that both lead to a contradiction.

\emph{Suppose $x_m \in S(f|_k)$}. Note first that since $\frac{\alpha_{k+1}}{1-\beta_{k+1}x_m} <0$, which follows from the assumption $x_m < \beta_{k+1}^{-1}$, we have $f(x_m) < f|_k (x_m)$. Note now that 
\begin{align} \label{eq:app:proof of main reciprocal thm step 3}
f|_k ' (x_m)=f ' (x_m) - \frac{\beta_{k+1} \alpha_{k+1}}{(1-\beta_{k+1}x_m)^2}=0- \frac{\beta_{k+1} \alpha_{k+1}}{(1-\beta_{k+1}x_m)^2} > 0
\end{align}
where $f ' (x_m)=0$ follows from $x_m$ being a critical point. Consider now any $y \in S(f) \subset S(f|_k)$. Then we must have $y > x_m$ by assumption of $x_m < \beta_{k+1}^{-1}$. Since $f|_k$ by Proposition \ref{lemma:app:properties of central strip} is strictly convex in its central strip $S(f|_k )$, we must have $f|_k (x_m) < f|_k (y)$ since $f|_k$ must be monotonically increasing between the two points, which follows from $f|_k ' (x_m) > 0$ and strict convexity. Finally, since $\frac{\alpha_{k+1}}{1-\beta_{k+1}y}  > 0$ by the assumption of $y \in S(f)$, we get $f|_k (y) < f(y)$. Putting all the derived inequalities together gives 
\begin{align} \label{eq:app:proof of main reciprocal thm step 4}
f(x_m) < f|_k (x_m) < f|_k (y) < f(y)
\end{align}
which holds for all $y \in S(f) $. By Proposition \ref{lemma:app:properties of central strip}, $f$ must have a critical point $x_c$ in $S(f)$ and it follows from \cref{eq:app:proof of main reciprocal thm step 4} that $f(x_m) < f(x_c)$, contradicting the assumption of $x_m$ being a maximal critical point of $f$. 

\emph{Suppose now $x_m \notin S(f|_k)$}. We must still by \cref{eq:app:proof of main reciprocal thm step 3} have $f|_k ' (x_m) > 0$ in this case. and it also still follows that $f(x_m) < f|_k (x_m)$. Since $x_m \notin S(f|_k)$, it follows that $x_m < \beta_k^{-1}$ and Proposition \ref{lemma:app:asymptotics of R_ab}(\emph{1.}) or (\emph{2.}) implies that $f|_k ' $ must be smooth in an interval $[x_m,\beta_a^{-1})$ for some $1 < a \leq k$ with $\beta_a < 0$, and $f|_k ' (y)$ must approach $- \infty$ as $y$ approaches $\beta_a^{-1}$. Since we have $f|_k ' (x_m) > 0$, smoothness entails that there must exist some $y \in (x_m,\beta_a^{-1})$ at which $f|_k ' (y) = 0$ and $f|_k (y) > f|_k (x_m)$, since $f|_k$ must be monotonically increasing in the interval $(x_m,y)$. Thus, $y$ is a critical point of $f|_k$. By assumption of the inductive hypothesis, the maximal critical point of $f|_k$ must lie in its central strip, as explained above. Call this maximal critical point $z_m$. Then $z_m \in S(f|_k)$ and $f|_k (z_m) > f|_k (y)$. Combining all inequalities derived so far gives us 
\begin{align}\label{eq:app:proof of main reciprocal thm step 5}
f(x_m) < f|_k (x_m) < f|_k (y) < f|_k (z_m)
\end{align}
By Proposition \ref{lemma:app:properties of central strip}, $z_m$ is the minimum of $f|_k$ in $S(f|_k)$. Thus, since $S(f) \subset S(f|_k)$, we have that for any $x \in S(f)$, $f|_k (x) \geq f|_k (z_m)$, and \cref{eq:app:proof of main reciprocal thm step 5} now entails that $f|_k (x) > f(x_m)$. Since we by assumption of $x \in S(f)$ have $x > \beta_{k+1}^{-1}$, we must have $\frac{\alpha_{k+1}}{1-\beta_{k+1}x} > 0$, giving us $f(x) > f|_k (x)$. Combining this with the last inequality entails $f(x) > f(x_m)$ which holds for any $x \in S(f)$. In particular, it must hold for the critical point of $f$ in $S(f)$, which exist by Proposition \ref{lemma:app:properties of central strip}, contradicting the assumption of $x_m$ being a maximal critical point of $f$. 

We thus see that if $x_m$ is a maximal critical point of $f$, we cannot have $x_m < \beta_{k+1}^{-1}$. By an argument essentially identical to the one just seen, we can neither have $x_m > \beta_1^{-1}$. In this new argument, the only difference is that we define $f|_k(x):=f(x)-\frac{\alpha_1}{1-\beta_1 x}$, and that we now get $f|_k ' (x_m) < 0$ instead of $f|_k ' (x_m) > 0$ and have to use the mirrored asymptotic properties of $f(x)$ for $x > 0$ from Proposition \ref{lemma:app:asymptotics of R_ab} instead of the case $x < 0$. The conclusion is that we cannot have $x_m \notin S(f)$, thus proving the inductive step. Hence, we must have $x_m \in S(f)$, which by induction completes the proof of Theorem \ref{lemma:app:main lemma for reciprocal functs}.

\end{proof}

\section{Proof of sub-multiplicativity for the spherical independence numbers} \label{app:submultiplic}

In this section, we prove that $\alpha_{\mathcal{S}}(G)$ is a sub-multiplicative graph function under the strong graph product $\boxtimes$ defined in Section \ref{sec:preliminaries}.

\begin{lemma} \label{lemma:submultiplicativity}
The spherical independence number $\alpha_{\mathcal{S}}$ is sub-multiplicative under the strong graph product, i.e. for any graphs $G$ and $H$, the following holds
\begin{align*}
\alpha_{\mathcal{S}}(G \boxtimes H) \leq \alpha_{\mathcal{S}} (G) \ \alpha_{\mathcal{S}}(H)
\end{align*} 
\end{lemma}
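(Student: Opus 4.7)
The plan is to pass through the walk-generating function characterization of $\alpha_{\mathcal{S}}$ established in Lemma \ref{lemma:2nd equality},
$$\alpha_{\mathcal{S}}(F) = \inf_{A_F} \min\bigl\{ W_{A_F}(x) \ \big|\ x \in [\lambda_{min}(A_F)^{-1}, \lambda_{max}(A_F)^{-1}] \bigr\} \ ,$$
and to prove the corresponding sub-multiplicativity of this quantity. Concretely, given any admissible pair $(A_G, x_G)$ for $G$ and $(A_H, x_H)$ for $H$, I will construct an explicit real symmetric weighted adjacency matrix $A$ for $G \boxtimes H$ and exhibit a single point in its admissible interval at which $W_A$ factors as $W_{A_G}(x_G) \cdot W_{A_H}(x_H)$.

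The construction is motivated by the desired factorization $I_{nm} - A = (I_G - x_G A_G) \otimes (I_H - x_H A_H)$; expanding and rearranging yields
$$A := x_G\, A_G \otimes I_H \;+\; x_H\, I_G \otimes A_H \;-\; x_G x_H\, A_G \otimes A_H \ .$$
From the Kronecker-product description $A(G \boxtimes H) = (A(G)+I_G) \otimes (A(H)+I_H) - I_G \otimes I_H$ recalled in Section \ref{sec:preliminaries}, each of the three summands above is supported on edges of $G \boxtimes H$ and vanishes on the diagonal, so $A$ is a valid real symmetric weighted adjacency matrix for $G \boxtimes H$. With orthonormal eigenbases $\{\boldsymbol{u}_i\}$ and $\{\boldsymbol{w}_j\}$ of $A_G$ and $A_H$ with eigenvalues $\lambda_i$ and $\mu_j$, the tensor products $\boldsymbol{u}_i \otimes \boldsymbol{w}_j$ diagonalize $A$ with eigenvalues
$$\eta_{ij} \;=\; x_G \lambda_i + x_H \mu_j - x_G x_H \lambda_i \mu_j \;=\; 1 - (1 - x_G \lambda_i)(1 - x_H \mu_j) \ .$$

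The central computation then drops out of the factorization of $I_{nm} - A$ together with $\boldsymbol{1}_{nm} = \boldsymbol{1}_n \otimes \boldsymbol{1}_m$:
$$W_A(1) \;=\; \langle \boldsymbol{1}_{nm}, (I_{nm} - A)^{-1} \boldsymbol{1}_{nm} \rangle \;=\; \langle \boldsymbol{1}_n, (I_G - x_G A_G)^{-1} \boldsymbol{1}_n \rangle \cdot \langle \boldsymbol{1}_m, (I_H - x_H A_H)^{-1} \boldsymbol{1}_m \rangle \;=\; W_{A_G}(x_G)\, W_{A_H}(x_H) \ .$$

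The step requiring the most care, and the one I expect to be the main technical obstacle, is verifying that $x=1$ indeed lies in $[\lambda_{min}(A)^{-1}, \lambda_{max}(A)^{-1}]$ together with handling boundary cases where $I_{nm} - A$ fails to be invertible. Since $x_G, x_H$ lie in their respective admissible intervals, both $(1 - x_G \lambda_i)$ and $(1 - x_H \mu_j)$ are nonnegative for all $i, j$, so every $\eta_{ij} \leq 1$; hence $\lambda_{max}(A) \leq 1$ and $\lambda_{max}(A)^{-1} \geq 1$. The inequality $\lambda_{min}(A)^{-1} \leq 1$ is automatic when $A \neq 0$ since then $\lambda_{min}(A) < 0$ by $\trace A = 0$, while the degenerate case $A = 0$ (and analogous cases for $A_G$ or $A_H$) is absorbed by the convention from the main text that $\lambda_{min}^{-1}$ and $\lambda_{max}^{-1}$ are then taken to be $\mp\infty$; boundary cases where some $\eta_{ij} = 1$ are handled by the continuous extension of $W_A$ across eigenvalues with $\langle \boldsymbol{1}, P_\lambda \boldsymbol{1} \rangle = 0$ noted in Definition \ref{def:weighted walk-generating function}. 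Assembling everything,
$$\alpha_{\mathcal{S}}(G \boxtimes H) \;\leq\; \min_{x \in [\lambda_{min}(A)^{-1}, \lambda_{max}(A)^{-1}]} W_A(x) \;\leq\; W_A(1) \;=\; W_{A_G}(x_G)\, W_{A_H}(x_H) \ ,$$
and taking the infimum over all admissible $(A_G, x_G)$ and $(A_H, x_H)$ yields the desired inequality $\alpha_{\mathcal{S}}(G \boxtimes H) \leq \alpha_{\mathcal{S}}(G)\, \alpha_{\mathcal{S}}(H)$.
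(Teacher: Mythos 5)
Your proof is correct and follows essentially the same route as the paper's Appendix \ref{app:submultiplic}: construct a weighted adjacency matrix on $G \boxtimes H$ so that $I - A$ tensor-factorizes and $W_A$ evaluated at a single admissible point splits into $W_{A_G}(x_G) \, W_{A_H}(x_H)$. The paper parameterizes the construction as $A_{G,H}(\gamma_G,\gamma_H)=(A_G+\gamma_G I_G)\otimes(A_H+\gamma_H I_H)-\gamma_G\gamma_H I_G\otimes I_H$ and evaluates $W$ at $x=-1/(\gamma_G\gamma_H)$; under the substitution $\gamma_G=-1/x_G$, $\gamma_H=-1/x_H$ your $A$ is exactly $-\tfrac{1}{\gamma_G\gamma_H}A_{G,H}(\gamma_G,\gamma_H)$ and evaluating $W$ at $1$ is the same via $W_{cA}(x)=W_A(cx)$, so the two arguments coincide after reparameterization — yours simply makes the factorization $I-A=(I_G-x_G A_G)\otimes(I_H-x_H A_H)$ explicit up front, which slightly streamlines the admissibility check.
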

\begin{proof}
Let $G$ and $H$ be any graphs of order $n_G$ and $n_H$, and let $A_G$ and $A_H$ be any associated real symmetric (hermitian) adjacency matrices. Then, for any real numbers $\gamma_G , \gamma_H \in \mathbb{R}$, the matrix $A_{G,H}(\gamma_G, \gamma_H)$ defined below is a weighted adjacency matrix for $G \boxtimes H$, which can be shown from the definition of $\boxtimes$ given in Section \ref{sec:preliminaries}
\begin{align*}
A_{G,H} (\gamma_G,\gamma_H):=(A_G+\gamma_G I_G) \otimes (A_H+\gamma_H I_H)-\gamma_G \gamma_H I_G \otimes I_H
\end{align*}
where $\otimes$ is the standard tensor-product, which can simply be viewed as the Kronecker product, and $I_G$ and $I_H$ are the identity matrices acting on the same spaces as $I_G$ and $I_H$.

It is easy to verify that $\left\{ \boldsymbol{u}_{j}^G \otimes \boldsymbol{u}_{k}^H \right\}_{j,k=1}^{n_G, n_H}$, where $\boldsymbol{u}_{j}^G$ and $\boldsymbol{u}_{k}^H$ are the ortho-normal eigenvectors of $A_G$ and $A_H$ respectively, is a set of ortho-normal eigenvectors of $A_{G,H} (\gamma_G,\gamma_H)$ which span the entire vector-space, hence consists of all eigenvectors of $A_{G,H} (\gamma_G,\gamma_H)$. It follows that the eigenvalues of $A_{G,H} (\gamma_G,\gamma_H)$ are given by
\begin{align} \label{eq:eigenvalus of G box H}
\lambda_{j k}:=(\lambda_j^G+\gamma_G)(\lambda_{k}^H+\gamma_H)-\gamma_G \gamma_H
\end{align}
where $\lambda_j^G$ and $\lambda_k^H$ ranges over the eigenvalues of $A_G$ and $A_H$ respectively. 
\\
\\
Let now $\lambda_{min}(G)$, $\lambda_{min}(H)$ and $\lambda_{max}(G)$, $\lambda_{max}(H)$ be the smallest, and respectively largest, eigenvalues of $A_G$ and $A_H$. We now restricts our choices for $\gamma_G$ and $\gamma_H$ s.t. the numbers satisfy
\begin{align}
\gamma_G \geq - \lambda_{min}(G) \qquad & \text{or} \qquad \gamma_G \leq - \lambda_{max}(G) \label{eq:choices for submult gammas G} \\ \label{eq:choices for submult gammas H}
\gamma_H \geq - \lambda_{min}(H) \qquad & \text{or} \qquad \gamma_H \leq - \lambda_{max}(H)
\end{align}
These choices of $\gamma_G$ and $\gamma_H$ entails, by \cref{eq:eigenvalus of G box H}, that depending on whether $\gamma_G$ and $\gamma_H$ are positive or negative, the smallest eigenvalue $\lambda_{min}(G \boxtimes H)$ and largest eigenvalue $\lambda_{max}(G \boxtimes H)$ of $A_{G,H} (\gamma_G,\gamma_H)$ will be given by
\begin{align} \label{eq:smallest eigenvalue of G box H}
\lambda_{min}(G \boxtimes H)=(\eta_{\gamma_G}(G)+\gamma_G)(\eta_{\gamma_H}(H)+\gamma_H)-\gamma_G \gamma_H \\ \label{eq:largest eigenvalue of G box H}
\lambda_{max}(G \boxtimes H)=(\mu_{\gamma_G}(G)+\gamma_G)(\mu_{\gamma_H}(H)+\gamma_H)-\gamma_G \gamma_H
\end{align}
where
\begin{align*}
(\eta_{\gamma_G}(G), \eta_{\gamma_H}(H)) = \left\{ \begin{matrix}
(\lambda_{min}(G), \lambda_{min}(H)) \qquad \text{if} \ \: \gamma_G \geq 0, \ \gamma_H \geq 0 \\
(\lambda_{max}(G), \lambda_{min}(H)) \qquad \text{if} \ \: \gamma_G \geq 0, \ \gamma_H \leq 0 \\
(\lambda_{min}(G), \lambda_{max}(H)) \qquad \text{if} \ \: \gamma_G \leq 0, \ \gamma_H \geq 0 \\
(\lambda_{max}(G), \lambda_{max}(H)) \qquad \text{if} \ \: \gamma_G \leq 0, \ \gamma_H \leq 0 \\
\end{matrix} \right. \\
(\mu_{\gamma_G}(G), \mu_{\gamma_H}(H)) = \left\{ \begin{matrix}
(\lambda_{max}(G), \lambda_{max}(H)) \qquad \text{if} \ \: \gamma_G \geq 0, \ \gamma_H \geq 0 \\
(\lambda_{min}(G), \lambda_{max}(H)) \qquad \text{if} \ \: \gamma_G \geq 0, \ \gamma_H \leq 0 \\
(\lambda_{max}(G), \lambda_{min}(H)) \qquad \text{if} \ \: \gamma_G \leq 0, \ \gamma_H \geq 0 \\
(\lambda_{min}(G), \lambda_{min}(H)) \qquad \text{if} \ \: \gamma_G \leq 0, \ \gamma_H \leq 0 \\
\end{matrix} \right.
\end{align*}
Note that we always have $\lambda_{min}(G \boxtimes H) \leq 0$ and $\lambda_{max}(G \boxtimes H) \geq 0$, while $\frac{-1}{\gamma_G \gamma_H}$ can both be smaller and larger than $0$, depending on the choices for $\gamma_G$ and $\gamma_H$ in (\ref{eq:choices for submult gammas G}) and (\ref{eq:choices for submult gammas H}). It can be checked using (\ref{eq:smallest eigenvalue of G box H}) and (\ref{eq:largest eigenvalue of G box H}) and the conditions on $\gamma_G$ and $\gamma_H$ in (\ref{eq:choices for submult gammas G}) and (\ref{eq:choices for submult gammas H}), that in any of the $4$ cases of choices for $\gamma_G$ and $\gamma_H$, we always have
\begin{align} \label{eq:proof submult good range}
\lambda_{min}(G \boxtimes H)^{-1} \leq \frac{-1}{\gamma_G \gamma_H} \leq \lambda_{max}(G \boxtimes H)^{-1}
\end{align}
In case $\lambda_{min}(G \boxtimes H) = 0$ or $\lambda_{max}(G \boxtimes H)=0$, which will only occur if $A_G = 0$ or $A_H = 0$, we simply denote $\lambda_{min}(G \boxtimes H)^{-1} = - \infty$ or $\lambda_{max}(G \boxtimes H)^{-1}=+ \infty$ in accordance with the notation from Theorem \ref{theorem:char lovasz numb}. 

Using (\ref{eq:proof submult good range}) and the characterization of $\alpha_{\mathcal{S}}(G)$ from Theorem \ref{theorem:char lovasz numb}, we now get
\begin{align} \label{eq:der of subadditivity step walk gen func}
\alpha_{\mathcal{S}} (G \boxtimes H) \leq \min_{\lambda_{min}(G \boxtimes H)^{-1} \leq x \leq \lambda_{max}(G \boxtimes H)^{-1}} W_{A_{G,H}(\gamma_G,\gamma_H)}(x) \leq  W_{A_{G,H}(\gamma_G,\gamma_H)} \left( \frac{-1}{\gamma_G \gamma_H} \right) 
\end{align}
in case $A_G$ and $A_H$ were real symmetric weighted adjacency matrices.
\\
\\
Let now $\boldsymbol{1}_G$, $\boldsymbol{1}_H$ and $\boldsymbol{1}_{G,H}$ be the all-ones vectors on the same spaces as $A_G$, $A_H$ and $A_{G,H}(\gamma_G, \gamma_H)$ respectively. Clearly $\boldsymbol{1}_{G,H}=\boldsymbol{1}_G \otimes \boldsymbol{1}_H$. Since the eigenvectors of $A_{G,H}(\gamma_G, \gamma_H)$ are given by $\left\{ \boldsymbol{u}_{j}^G \otimes \boldsymbol{u}_{k}^H \right\}_{j,k=1}^{n_G, n_H}$ with corresponding eigenvalues given by \eqref{eq:eigenvalus of G box H}, as discussed above, we get
\begin{align}
W_{A_{G,H}(\gamma_G,\gamma_H)} \left( \frac{-1}{\gamma_G \gamma_H} \right) = \sum_{j,k = 1}^{n_G, n_H} \frac{\left| \langle \boldsymbol{1}_{G,H}, \boldsymbol{u}_{j}^G \otimes \boldsymbol{u}_{k}^H \rangle  \right|^2}{1-(-1)/(\gamma_G \gamma_H) \left( (\lambda_j^G+\gamma_G)(\lambda_{k}^H+\gamma_H)-\gamma_G \gamma_H \right)} \notag \\
= \sum_{j,k = 1}^{n_G, n_H} \frac{|\langle \boldsymbol{1}_G , \boldsymbol{u}_j^G \rangle |^2 |\langle \boldsymbol{1}_H , \boldsymbol{u}_k^H \rangle |^2 }{(1+\lambda_j^G / \gamma_G) (1+\lambda_k^H / \gamma_H)} = \left( \sum_{j=1}^{n_G} \frac{|\langle \boldsymbol{1}_G , \boldsymbol{u}_j^G \rangle |^2}{1-\lambda_j^G / (-\gamma_G)} \right) \left( \sum_{k=1}^{n_H} \frac{|\langle \boldsymbol{1}_H , \boldsymbol{u}_k^H \rangle |^2}{1-\lambda_k^H / (-\gamma_H)} \right) \notag \\ \label{eq:proof submultiplicativity split step}
= W_{A_G}(-1/\gamma_G) W_{A_H} (-1/\gamma_H)
\end{align}
Hence, combining \eqref{eq:proof submultiplicativity split step} with \eqref{eq:der of subadditivity step walk gen func} and using the assumptions in \eqref{eq:choices for submult gammas G} and \eqref{eq:choices for submult gammas H}, we have that for any $\gamma_G, \gamma_H \in \mathbb{R}$ satisfying $\gamma_G \geq - \lambda_{min}(G)$ or $\gamma_G \leq - \lambda_{max}(G)$, and satisfying $\gamma_H \geq - \lambda_{min}(H)$ or $\gamma_H \leq - \lambda_{max}(H)$, we have
\begin{align*}
\alpha_{\mathcal{S}}(G \boxtimes H) \leq W_{A_G}(-1/\gamma_G) W_{A_H} (-1/\gamma_H)
\end{align*}
Changing variables to $x=-1/\gamma_G$ and $y=-1/\gamma_H$, and minimizing over the allowed intervals gives us
\begin{align*}
\alpha_{\mathcal{S}}(G \boxtimes H) \leq \left( \min_{\lambda_{min}(G)^{-1} \leq x \leq \lambda_{max}(G)^{-1}} W_{A_G}(x) \right) \left(\min_{\lambda_{min}(H)^{-1} \leq y \leq \lambda_{max}(H)^{-1}}   W_{A_H} (y) \right)
\end{align*}
Since $A_G$ and $A_H$ were arbitrary real symmetric (Hermitian) weighted adjacency matrices for $G$ and $H$, we can take an infimum over all $A_G$ and $A_H$ in the inequality above, which finally gives us
\begin{align*}
\alpha_{\mathcal{S}}(G \boxtimes H) \leq \left( \inf_{A_G} \min_{\lambda_{min}(G)^{-1} \leq x \leq \lambda_{max}(G)^{-1}} W_{A_G}(x) \right) \left( \inf_{A_H} \min_{\lambda_{min}(H)^{-1} \leq y \leq \lambda_{max}(H)^{-1}}   W_{A_H} (y) \right) \\
= \alpha_{\mathcal{S}}(G)  \alpha_{\mathcal{S}} (H)
\end{align*}
Thus finally proving sub-multiplicativity for $\alpha_{\mathcal{S}}$. 
\end{proof}

\end{appendices}

\end{document}